\theoremstyle{remark}
\newtheorem{para}{\bf}[section]
\newtheorem{assumption}[para]{\bf Assumption}
\theoremstyle{definition}
\newtheorem{exam}[para]{\bf Example}
\newtheorem{dfn}[para]{\bf Definition}
\theoremstyle{plain}
\newtheorem{thm}[para]{\bf Theorem}
\newtheorem{lemma}[para]{\bf Lemma}
\newtheorem{sublemma}[para]{\bf Sublemma}
\newtheorem{cor}[para]{\bf Corollary}
\newtheorem{prop}[para]{\bf Proposition}
\newenvironment{numequation}
{\addtocounter{enumi}{1}\begin{equation}}{\end{equation}}
\newcommand{\bB}{{\bf B}}
\newcommand{\bG}{{\bf G}}
\newcommand{\bL}{{\bf L}}
\newcommand{\bM}{{\bf M}}
\newcommand{\bP}{{\bf P}}
\newcommand{\bT}{{\bf T}}
\newcommand{\bU}{{\bf U}}
\newcommand{\bbQ}{{\mathbb Q}}
\newcommand{\bbR}{{\mathbb R}}
\newcommand{\bbZ}{{\mathbb Z}}
\newcommand{\cF}{{\mathcal F}}
\newcommand{\cH}{{\mathcal H}}
\newcommand{\cI}{{\mathcal I}}
\newcommand{\cJ}{{\mathcal J}}
\newcommand{\cO}{{\mathcal O}}
\newcommand{\frb}{{\mathfrak b}}
\newcommand{\frd}{{\mathfrak d}}
\newcommand{\frg}{{\mathfrak g}}
\newcommand{\frl}{{\mathfrak l}}
\renewcommand{\frm}{{\mathfrak m}}
\newcommand{\frp}{{\mathfrak p}}
\newcommand{\frq}{{\mathfrak q}}
\newcommand{\frt}{{\mathfrak t}}
\newcommand{\fru}{{\mathfrak u}}
\newcommand{\frx}{{\mathfrak x}}
\newcommand{\frz}{{\mathfrak z}}
\newcommand{\tH}{\widetilde{H}}
\newcommand{\uM}{{\underline M}}
\newcommand{\uN}{{\underline N}}
\newcommand{\vep}{\varepsilon}
\newcommand{\vphi}{\varphi}
\newcommand{\Z}{{\mathbb Z}}
\newcommand{\ad}{{\rm ad}}
\newcommand{\Ad}{{\rm Ad}}
\newcommand{\bksl}{\backslash}
\newcommand{\End}{{\rm End}}
\newcommand{\GL}{{\rm GL}}
\newcommand{\Hom}{{\rm Hom}}
\newcommand{\hra}{\hookrightarrow}
\newcommand{\ind}{{\rm ind}}
\newcommand{\Ind}{{\rm Ind}}
\newcommand{\Lie}{{\rm Lie}}
\newcommand{\lra}{\longrightarrow}
\newcommand{\midc}{\;|\;}
\newcommand{\Qp}{{\bbQ}_p}
\newcommand{\ra}{\rightarrow}
\newcommand{\Rep}{{\rm Rep}}
\newcommand{\sub}{\subset}
\newcommand{\supp}{{\rm supp}}
\newcommand{\alg}{{\rm alg}}
\newcommand\Zp{{{\bbZ}_p}}
\newcommand{\Pf}{{\it Proof. }}
\renewcommand{\qed}{{\hfill{\space} $\Box$}}
\begin{document}

\title{Category $\cO$ and locally analytic representations}
\author{Sascha Orlik}
\address{Fachbereich C - Mathematik und Naturwissenschaften, Bergische Universit\"at Wuppertal,
Gau{\ss}stra\ss{}e 20, D-42119 Wuppertal, Germany}
\email{orlik@math.uni-wuppertal.de}
\author{Matthias Strauch}
\address{Indiana University, Department of Mathematics, Rawles Hall, Bloomington, IN 47405, USA}
\email{mstrauch@indiana.edu}
\thanks{M. S. would like to acknowledge the support of the National Science Foundation (award number DMS-1202303).}

\begin{abstract}
For a split reductive group $G$ over a finite extension $L$ of $\Qp$, and a parabolic subgroup $P \sub G$ we introduce a category $\cO^P$ which is equipped with a forgetful functor to the parabolic category $\cO^\frp$ of Bernstein, Gelfand and Gelfand. There is a canonical fully faithful embedding of a subcategory $\cO^\frp_\alg$ of $\cO^\frp$ into $\cO^P$, which 'splits' the forgetful map. We then introduce functors from the category $\cO^P$ to the category of locally analytic representations, thereby generalizing the authors' previous work where these functors had been defined on the category $\cO^\frp_\alg$. It is shown that these functors are exact, and a criterion for the  irreducibility of a representation in the image of this functor is proved.
\end{abstract}

\maketitle

\tableofcontents

\section{Introduction}

Let $L$ be a finite extension of $\Qp$ and let $G = \bG(L)$ be the group of $L$-valued points of a split connected reductive algebraic group $\bG$ over $L$. In the paper \cite{OS2} we introduced and studied certain bi-functors

$$\cF^G_P: \cO^\frp_\alg \times \Rep^{\infty}_K(L_P) \lra \Rep^{\rm loc. an.}_K(G) \;.$$

\vskip8pt

Here $\cO^\frp_\alg$ is a full subcategory of the parabolic Bernstein-Gelfand-Gelfand category $\cO^\frp$, attached to a standard parabolic 
subalgebra $\frp \sub \frg$. The subcategory $\cO^\frp_\alg$ consists of objects $M$ such that, if we consider the restriction to the Levi subalgebra $\frl_\frp \sub \frp$, the vector space $M$ can be written as a direct sum of irreducible algebraic representations of the corresponding Levi subgroup $\bL_\bP \sub \bG$.

\vskip8pt

This functor is exact in both arguments, and it was shown that $\cF^G_P(M,V)$ is topologically irreducible if and only if the following two conditions are satisfied

\begin{enumerate}

\item $M \in \cO^\frp_\alg$ is simple, and if $\frq \supset \frp$ is the maximal  parabolic subalgebra such that $M$ lies in $\cO^\frq$, then

\item the smooth induction $\ind^{L_Q}_{L_P(L_Q \cap U_P)}(V)$ is irreducible.
\end{enumerate}

\vskip8pt

Here and in the following we denote by standard roman letters the group of $L$-valued points of the corresponding algebraic group, e.g., $L_P = \bL_\bP(L)$.

\vskip8pt

In this paper we extend the functors to a category $\cO^P$ whose objects consist of those pairs $\uM = (M,\tau)$, where $M$ is an object of $\cO^\frp$ and $\tau: P = \bP(L) \ra \End_K(M)^*$ is a locally finite-dimensional locally analytic representation on $M$ which lifts the Lie algebra representation of $\frp$ on $M$. Mapping $\uM = (M,\tau)$ to $M$ is a  forgetful functor $\omega: \cO^P \to \cO^\frp$, from which we deduce that every object in $\cO^P$ has finite length. 

\vskip8pt
For $\uM = (M, \tau)$ in $\cO^P$ the $U(\frg)$-module structure on $M$ extends canonically to a $D(\frg,P)$-module structure, where $D(\frg,P)$ is the smallest $K$-subalgebra of $D(G)$ containing $U(\frg)$ and $D(P)$. Given $\uM \in \cO^P$ as above, we set 

$$\cF^G_P(\uM):=(D(G)\otimes_{D(\frg,P)} M)' \;.$$

\vskip8pt

This is a strongly admissible locally analytic $G$-representation. In fact, it is canonically isomorphic to a closed subrepresentation of the parabolically induced representation $\Ind^G_P(W')$. Here $W \subset M$ is any finite-dimensional $P$-subrepresentation of $M$ which generates $M$ as a $U(\frg)$-module. Then we have $\cF^G_P(\uM) \cong \Ind^G_P(W')^\frd$, where the latter denotes the subspace of vectors annihilated by $\frd = \ker\Big(U(\frg) \otimes_{U(\frp)} W \twoheadrightarrow M\Big)$. 
As in \cite{OS2}, we extend the functor $\cF^G_P$ to a bi-functor

$$\cF^G_P: \cO^P \times \Rep^{\infty}_K(L_P) \lra \Rep^{\rm loc. an.}_K(G) \;.$$

\vskip8pt

Following the arguments presented in \cite{OS2}, we show that $\cF^G_P$ is exact in both arguments. And we remark that the proof of the ``$PQ$-formula'' in \cite[4.9]{OS2}, i.e., $\cF^G_P(\uM,V)=\cF^G_Q(\uM,\ind^ Q_P(V))$ if $\uM \in \cO^Q$ and $Q \supset P$, carries over to the present setting. Recall that the Lie algebra $\frp$ is called {\it maximal} for $M \in \cO$, if $M \in \cO^\frp$ and $M \notin \cO^\frq$ for any parabolic subalgebra $\frq \supsetneq \frp$. Our main result is then the following statement\footnote{where we assume that $p>2$ if the root system of $G$ contains components of type $B$, $C$, or $F_4$, and that $p>3$ when components of type $G_2$ occur.}

\begin{thm}\label{main_result} Let $\uM = (M,\tau) \in \cO^P$ be such that (i) $M$ is a simple $U(\frg)$-module, and (ii)  $\frp$ is maximal for $M$. Let $V$ be an irreducible smooth $L_P$-representation. Then the locally analytic $G$-representation $\cF^G_P(\uM,V)$ is topologically irreducible.
\end{thm}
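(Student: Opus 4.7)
The plan is to pass to strong continuous duals and show that every nonzero $D(G)$-module quotient $N$ of $\cF^G_P(\uM,V)'=D(G)\otimes_{D(\frg,P)}(M\otimes V)$ must equal the full module; equivalently, that every nonzero closed $G$-subrepresentation of $\cF^G_P(\uM,V)$ coincides with the whole space. Here $V$ is regarded as a $(\frg,P)$-module with $\frg$ acting trivially (since $V$ is smooth) and $P$ acting through $L_P$, and $M\otimes V$ carries the diagonal structure; the $D(\frg,P)$-module structure on $M\otimes V$ follows from the fact that, by construction of $\cO^P$, the Lie algebra action on $M$ extends canonically through $D(\frg,P)$.

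First I would analyze the canonical $(\frg,P)$-equivariant map $\iota\colon M\otimes V\to D(G)\otimes_{D(\frg,P)}(M\otimes V)$, $x\mapsto 1\otimes x$, composed with the surjection $\pi$ onto $N$, giving $\bar\iota=\pi\circ\iota$. Since the image of $\iota$ generates the target as a $D(G)$-module, $\bar\iota\neq 0$ whenever $N\neq 0$. As a $\frg$-module, $M\otimes V\cong M^{\oplus\dim V}$, so by simplicity of $M$ together with $\End_\frg(M)=K$ (Schur's lemma for simple objects of $\cO$ over algebraically closed $K$), every $\frg$-submodule has the form $M\otimes V_0$ for a subspace $V_0\subseteq V$. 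Since $\ker(\bar\iota)$ is also $P$-stable, $V_0$ is $L_P$-stable, and irreducibility of $V$ forces $V_0=0$ (as $V_0=V$ would give $\bar\iota=0$, hence $N=0$). Therefore $\bar\iota$ is injective.

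The main and harder step is to upgrade this injectivity on $M\otimes V$ to injectivity of $\pi$ itself, i.e.\ to show $\ker(\pi)=0$. Following the strategy of \cite[\S5]{OS2}, I would use the Bruhat big cell $\bar U_P\cdot P$ to obtain a topological factorization
$$D(G)\otimes_{D(\frg,P)}(M\otimes V)\;\cong\;D(\bar U_P)\,\hat\otimes\,(M\otimes V),$$
and grade the right-hand side by the monoid of roots generated by $-\Phi^+\setminus\Phi(\frl_\frp)$ via the action of a Cartan $T\subset L_P$. A hypothetical nonzero element of $\ker(\pi)$, after projection to its leading weight component and use of the injection established in the previous paragraph, would produce a nontrivial relation that effectively extends the $\frp$-action on $M$ to a strictly larger parabolic subalgebra $\frq\supsetneq\frp$, contradicting maximality. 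The irreducibility of $V$ here plays the role of the irreducibility of the smooth induction $\ind^{L_Q}_{L_P(L_Q\cap U_P)}(V)$ appearing in the criterion of \cite{OS2}: under the maximality assumption on $\frp$ one necessarily has $Q=P$, so that smooth induction collapses to $V$ itself, and our hypothesis suffices.

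The principal obstacle relative to the setting of \cite{OS2} is that $M$ carries only a locally analytic, locally finite-dimensional $P$-action rather than an algebraic one, whereas the Bruhat-cell weight analysis there tacitly uses algebraicity of the $L_P$-action on $M$. I plan to overcome this by working inside finite-dimensional $P$-stable subspaces $W\subset M$ that generate $M$ over $U(\frg)$: on such $W$ the locally analytic $P$-action is uniquely determined by the restriction of the $\frp$-action together with the locally analytic structure on a sufficiently small compact open subgroup of $T$, so the weight-space manipulations used in \cite[\S5]{OS2} go through verbatim after this bookkeeping. (The mild restriction on $p$ in the footnote enters here, through the existence of a well-behaved exponential for the relevant compact subgroups.) Once this is in place, maximality of $\frp$ forces $\ker(\pi)=0$, so $\pi$ is an isomorphism, $N$ is the full module, and $\cF^G_P(\uM,V)$ is topologically irreducible.
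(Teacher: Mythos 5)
Your opening reduction (dualize, use that $M\otimes V$ is $M$-isotypic, Schur, and irreducibility of $V$ to get injectivity of $\bar\iota$) is fine as far as it goes, but the ``main and harder step'' is where the proposal breaks down, and it breaks down at exactly the points where the paper has to work. First, the claimed factorization $D(G)\otimes_{D(\frg,P)}(M\otimes V)\cong D(\bar U_P)\,\hat\otimes\,(M\otimes V)$ is false: the big cell is not all of $G$, and the correct structure (after restricting to $G_0$, resp.\ to a uniform normal subgroup $H$, and passing to the Banach completions $D_r(H)$ via the Schneider--Teitelbaum theory and \cite[3.9]{ST2} --- a reduction your proposal never makes, though it is what converts topological irreducibility into an algebraic simplicity statement) is a \emph{direct sum} $\bM_r=\bigoplus_{g\in HP_0/P_{0,r}}\delta_g\star\frm_r$ indexed by Bruhat/Iwahori cosets. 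Simplicity of $\bM_r$ then requires two things: that each $\frm_r=U_r(\frg)M$ is simple (which uses density of $M$ and \cite[1.3.12]{F}), and, crucially, that the translates $\delta_{g_1}\star\frm_r$ and $\delta_{g_2}\star\frm_r$ are pairwise non-isomorphic for distinct cosets. Your ``leading weight component forces an extension of the $\frp$-action to a larger parabolic, contradicting maximality'' only reproduces the easy part of this (Step 1 of the paper's Theorem \ref{U_r_modules}, where maximality and Cor.\ \ref{injective} force $w\in W_P$). It cannot handle the genuinely hard case $u\in U^-_{P,0}\setminus P_{0,r}$: there is no larger parabolic in sight there, and an isomorphism $\delta_u\star\frm_r\cong\frm_r$ is ruled out only by a quantitative argument --- the would-be image of the highest weight vector is the formal series $\delta_{u^{-1}}\cdot v^+=\sum_n\frac{1}{n!}(-\log u)^n v^+$, and one must show its weight components have $\|\cdot\|_r$-norms growing like $(ps)^n$, hence that it cannot lie in $\frm_r$. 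That estimate rests on the integrality statements of section \ref{HWM} (Prop.\ \ref{both_conditions}), which is also where the hypothesis on $p$ actually enters (via Chevalley-basis structure constants), not through ``existence of a well-behaved exponential'' as you guess. Without this step the argument simply does not close.

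Two further gaps: (a) for infinite-dimensional smooth $V$ the identification $\cF^G_P(\uM,V)'\cong D(G)\otimes_{D(\frg,P)}(M\otimes V)$ that your whole setup presupposes is not available (the duality \ref{dual_iso} is proved for finite-dimensional coefficients); the paper instead treats general $V$ by first proving the trivial-$V$ case together with its refinement over arbitrary open \emph{normal} subgroups $H\le G_0$ (Theorem \ref{irredH}: simplicity and pairwise non-isomorphy of the $\delta_g\star(D(HP_0)\otimes_{D(\frg,P_0)}M)$), and then running the Clifford-theoretic argument of \cite[5.8]{OS2}; your appeal to ``$Q=P$ so induction collapses'' explains why the hypothesis on $V$ is the right one but is not a proof. (b) The worry you do flag --- that the $P$-action is only locally analytic rather than algebraic --- is not where the difficulty lies and is not resolved by ``bookkeeping inside $W$''; the real adjustment needed is that the weights of $\frt$ on $M$ need no longer be $p$-adically integral, which is precisely why the paper reproves the integrality results of \cite[sec.\ 8]{OS2} in section \ref{HWM}.
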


A special but essential case is when $V$ is the trivial one-dimensional representation. Once one has proved this case, one can use the same arguments as in the proof of \cite[5.8]{OS2}, to deduce the general case. 

\vskip8pt

In order to show the irreducibility of $\cF^G_P(\uM)$, for $\uM$ as in \ref{main_result}, we follow very closely (in some passages word-by-word) the proof of the irreducibility result \cite[5.3]{OS2}. A key role in the proof of \cite[5.3]{OS2}, is played by assertions about ``integrality properties'' of certain relations in simple modules in $\cO^\frp_\alg$, which have been proved in the appendix of \cite{OS2}. But the results \cite[8.10, 8.13]{OS2} of that appendix do not immediately carry over to the present setting, because it has been used there that the eigenvalues of elements in $\frt = \Lie(\bT)$ are ($p$-adically) integral, which is not the case any more for general objects in $\cO$. In section \ref{HWM} we have therefore modified some of the arguments used in \cite[sec. 8]{OS2} to fit the present setting. 

\vskip8pt

We would also like to mention that we are currently working on extending the theory presented here to the case of quasi-split groups.

\vskip8pt

{\it Acknowledgments.} Parts of this paper were written at MSRI in Fall 2014. We are grateful to MSRI and its staff for providing excellent working conditions.

\vskip8pt

{\it Notation and conventions.} We denote by $p$ a prime number and consider fields $L \sub K$ which are both finite extensions of $\Qp$. 
Let $O_L$ and $O_K$ be the rings of integers of $L$, resp. $K$, and let $|\cdot |_K$ be the absolute value on $K$ such that $|p|_K = p^{-1}$. The field $L$ is our ''base field'', whereas we consider $K$ as our ''coefficient field''. For a locally convex $K$-vector space $V$ we denote by $V'_b$ its strong dual, i.e., the $K$-vector space of continuous linear forms equipped with the strong topology of bounded convergence. Sometimes, in particular when $V$ is finite-dimensional, we simplify notation and write $V'$ instead of $V'_b$. All finite-dimensional $K$-vector spaces are equipped with the unique Hausdorff locally convex topology.

\section{The BGG category with $P$-action}

\begin{para} We let $\bG_0$ be a reductive group scheme over $O_L$ and $\bT_0 \sub \bB_0 \sub \bG_0$ a maximal split torus and a Borel subgroup scheme, respectively. We denote by $\bG$, $\bB$, $\bT$ the base change of $\bG_0$, $\bB_0$ and $\bT_0$ to $L$. By $G_0 = \bG_0(O_L)$, $B_0 = \bB_0(O_L)$, etc., and $G = \bG(L)$, $B = \bB(L)$, etc., we denote the corresponding groups of $O_L$-valued points and $L$-valued points, respectively. Standard parabolic subgroups of $\bG$ (resp. $G$) are those which contain $\bB$ (resp. $B$). For each standard parabolic subgroup $\bP$ (or $P$) we let $\bL_\bP$ (or $L_P$) be the unique Levi subgroup which contains $\bT$ (resp. $T$). Finally, Gothic letters $\frg$, $\frp$, etc., will denote the Lie algebras of $\bG$, $\bP$, etc.: $\frg = \Lie(\bG)$, $\frt = \Lie(\bT)$, $\frb = \Lie(\bB)$, $\frp = \Lie(\bP)$, $\frl_P = \Lie(\bL_\bP)$, etc.. Base change to $K$ is usually denoted by the subscript ${}_K$, for instance, $\frg_K = \frg \otimes_L K$. 

\vskip8pt

We make the general convention that we denote by $U(\frg)$, $U(\frp)$, etc., the corresponding
enveloping algebras, {\it after base change to K}, i.e., what would be usually denoted by $U(\frg) \otimes_L K$, $U(\frp) \otimes_L K$, and so on.
\end{para} 

\begin{para} We recall the definition of the BGG-category for the reductive Lie algebra $\frg_K$ and its Borel subalgebra $\frb_K$, as given in \cite[2.5]{OS2}. Let $\frp \supset \frb$ be a standard parabolic subalgebra. The category $\cO^\frp$ is the full subcategory of $U(\frg)$-modules $M$ satisfying the following properties:

\vskip8pt

\begin{enumerate}
\item $M$ is finitely generated as a $U(\frg)$-module. 
\item Viewed as a $\frt_K$-module, $M$ is the direct sum of one-dimensional representions. 
\item The action of $\frp_K$ on $M$ is locally finite, i.e., for every $m \in M$ the $K$-vector space $U(\frp) \cdot m$ is finite-dimensional.
\end{enumerate}

\vskip8pt

In this paper we are interested in objects $M \in \cO^\frp$ which have the property that the representation of $\frp$ integrates to a locally finite-dimensional locally analytic representation of $P = \bP(L)$. Because this is not always possible, we are led to consider the following category.
\end{para}

\vskip8pt

\begin{dfn}
Let $P \subset G$ be a standard parabolic subgroup. We then denote by $\cO^P$ the category whose:

\vskip8pt

- {\it Objects} are pairs $\uM = (M,\tau)$ where $M \in \cO^\frp$ and $\tau: P \to \End_K(M)^*$ is a representation such that the 
following conditions are satisfied: (i) there is an increasing union $M = \bigcup_i M_i$ by finite-dimensional locally analytic $P$-stable 
$K$-subspaces, which are also $\frp$-stable, and such that the derived action of $\frp$ on each $M_i$ (derived from the $P$-action) coincides with the given action coming from the inclusion $\frp \sub \frg$; (ii) for every $p \in P$, every $\frx \in \frg$, and every $m \in M$ one has $\Big(\Ad(p)(\frx)\Big) \cdot m = \tau(p)\Big(\frx \cdot \tau(p^{-1})(m)\Big)$.

\vskip8pt
 
- {\it Morphisms} from $\uM = (M,\tau)$ to $\uM' = (M',\tau')$ are given by $U(\frg)$-module homomorphisms $f: M \ra M'$ such that for all $m \in M$ and all $p \in P$ one has $f(\tau(p)(m))=\tau'(p)(f(m))$.
\end{dfn}

\vskip8pt

By the very definition of this category we have a functor

\begin{eqnarray*}
\omega: \cO^P  & \to & \cO^\frp \;, \\
\uM = (M,\rho) & \mapsto & M \;,
\end{eqnarray*}

\vskip8pt

by forgetting the additional structure of a $P$-representation.

\begin{exam}
(i) In \cite{OS2} we have shown that every object $M$ of the category $\cO^\frp_\alg$ carries a canonical action of $P$. One thus obtains a fully faithful functor $\cO^\frp_\alg \to \cO^P$ whose composition with $\omega$ is just the inclusion $\cO^\frp_\alg \subset \cO^\frp$.
 
\vskip8pt

(ii) The forgetful functor $\cO^P \to \cO^ \frp$ is in general not surjective. For example, consider $\bG = {\rm PGL}_2$ and the standard two-dimensional representation $M$ of $\frg = \frp\frg\frl_2$. This object lies in $\cO^\frg$, but the Lie algebra representation does not integrate to a representation of $G$. Hence $\cO^G \ra \cO^\frg$ is not surjective.
\end{exam}

\vskip8pt

\begin{lemma} 
The category $\cO^P$ is abelian.
\end{lemma}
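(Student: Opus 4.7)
The plan is to exploit the fact that $\cO^\frp$ is already known to be abelian, and to show that the forgetful functor $\omega:\cO^P\to\cO^\frp$ is faithful and that all the structure required for abelianness lifts canonically through $\omega$. Since a morphism in $\cO^P$ is just a $U(\frg)$-linear, $P$-equivariant map, $\omega$ is faithful, and consequently it suffices to check that kernels, cokernels, finite direct sums, and a zero object exist in $\cO^P$ and are mapped to the corresponding objects of $\cO^\frp$ under $\omega$; the axioms ``every mono is a kernel'' and ``every epi is a cokernel'' will then follow from the same axioms in $\cO^\frp$ together with the uniqueness of the lift.

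I would first dispose of the trivial data: the zero module with the trivial $P$-action is a zero object, and given $\uM=(M,\tau)$ and $\uM'=(M',\tau')$ the pair $(M\oplus M',\tau\oplus\tau')$, equipped with the sum filtration, is both a product and a coproduct. For the kernel and cokernel of a morphism $f\colon\uM\to\uM'$ I would form $K=\ker(f)$ and $C=M'/\im(f)$ inside $\cO^\frp$ (using that this category is abelian). Because $f$ intertwines $\tau$ and $\tau'$, the subspace $K$ is $P$-stable and $\im(f)$ is $P$-stable, so $\tau$ restricts to a $P$-action on $K$ and $\tau'$ descends to a $P$-action $\bar\tau'$ on $C$. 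One then checks directly that these actions satisfy condition (ii) of the definition, since the identity $\Ad(p)(\frx)\cdot m=\tau(p)\bigl(\frx\cdot\tau(p^{-1})(m)\bigr)$ is inherited by submodules and quotient modules.

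The one point requiring a tiny amount of care is condition (i), the existence of an exhausting filtration by finite-dimensional locally analytic $P$-subrepresentations whose derived action agrees with the $\frp$-action. For the kernel I would use the filtration $K_i:=M_i\cap K$, where $M=\bigcup_i M_i$ is the given filtration of $M$; each $K_i$ is a finite-dimensional $P$- and $\frp$-stable subspace of $M_i$ (so automatically locally analytic, since the topology on a finite-dimensional $K$-vector space is unique), the derived action of $\frp$ on $K_i$ agrees with the restriction of the given $\frp$-action because it already does on $M_i$, and $\bigcup_i K_i=K$. For the cokernel I would use the image filtration $C_j:=M'_j/(M'_j\cap\im(f))$ of $M'=\bigcup_j M'_j$; each $C_j$ is a finite-dimensional Hausdorff quotient of a finite-dimensional locally analytic $P$-representation, hence itself locally analytic, the derived $\frp$-action on $C_j$ is the quotient of the given one, and $\bigcup_j C_j=C$.

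It remains to verify the universal properties and the axioms that every monomorphism is a kernel and every epimorphism is a cokernel. These transfer from $\cO^\frp$ via $\omega$ as soon as one knows that the lift of the $P$-action on a sub- or quotient object through $\omega$ is unique: this holds because the $P$-action on any object of $\cO^P$ is locally finite-dimensional locally analytic, and such an action is determined by the underlying $\frp$-module structure together with the requirement that it integrate that structure on finite-dimensional stable subspaces. The only step that might look like an obstacle is checking that the filtration on the cokernel gives a bona fide locally analytic structure compatible with the derived action, but in the finite-dimensional setting passing to Hausdorff quotients is automatic, so no serious analytic input is needed; the proof is essentially formal once one unwinds the definitions.
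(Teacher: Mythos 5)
Your construction of kernels and cokernels is exactly the paper's route (the paper only treats kernels ``exemplarily'': intersect the kernel with a finite-dimensional locally analytic $P$-stable subspace and use that a finite-dimensional $P$-stable subquotient is automatically locally analytic), and that part of your argument, including the filtrations $K_i = M_i \cap K$ and $C_j = M'_j/(M'_j\cap \im(f))$, is fine.

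However, the justification you give for transferring the axioms ``every mono is a kernel'' and ``every epi is a cokernel'' contains a false statement: a locally finite-dimensional locally analytic $P$-action is \emph{not} determined by the underlying $\frp$-module structure. The paper itself supplies counterexamples: for $G=P=\GL_n$ the smooth character $g\mapsto (-1)^{{\rm val}(\det g)}$ and the trivial character give different $P$-actions integrating the same (trivial) $\frg$-action, and the objects $\uM(\tilde{\lambda})$ lifting a fixed Verma module $M(\lambda)$ are parametrized by lifts $\tilde{\lambda}$ of $\lambda$, which are unique only up to a smooth character of $T$. So ``uniqueness of the lift through $\omega$'' is not available. Fortunately you do not need it: for a morphism $f\colon\uM\to\uN$, the canonical comparison map from the coimage to the image (both formed with your kernel/cokernel constructions) is a morphism in $\cO^P$ whose image under $\omega$ is an isomorphism in $\cO^\frp$, and its set-theoretic inverse is automatically $U(\frg)$-linear and $P$-equivariant, hence again a morphism in $\cO^P$. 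In other words, $\omega$ is faithful, preserves kernels and cokernels by construction, and reflects isomorphisms; the relevant $P$-actions on sub- and quotient objects agree not by any abstract uniqueness, but because they are restricted or induced along the same $P$-equivariant map. With that substitution your proof is complete and agrees in substance with the paper's.
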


\begin{proof}
Most of the axioms are easily verified. We show exemplarily that kernels exist. So let 
$f: \uM \to \uN$ be a morphism in $\cO^P$. Then $\ker(f) \in \cO^\frp$.  Let $v \in \ker(f)$. Let $W\subset M$ be a finite-dimensional subspace with $v\in W$ on which $P$ acts locally analytically.
Since $\ker(f)$ is a $P$-representation the same holds true for $W \cap \ker(f).$ As $W$ is finite-dimensional it is automatically a locally analytic $P$-representation. Hence any $v \in \ker(f)$ is contained in a locally analytic $P$-representation. The claim follows.
\end{proof}

\begin{exam}
Let $\uM = (M,\tau)$ be in $\cO^P$ and let $N \sub M$ be a $\frg$-submodule, i.e., $N \in \cO^\frp$. Then it is in general not an object of $\cO^P$.
Indeed let $G = P = \GL_n$ and let $M$ be the two-dimensional representation defined by $g \mapsto (1,(-1)^ {{\rm val}(\det(g))})$. Since this representation is smooth the Lie algebra $\frg$ acts trivially on it. Hence any subspace is a subrepresentation. 
But the subspace $\{(a,a) \midc a \in K\}$ does not lift to a $G$-representation.
\end{exam}

\begin{lemma} 
The category $\cO^P$ is artinian and noetherian.
\end{lemma}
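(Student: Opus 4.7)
The plan is to reduce both chain conditions to the classical fact that every object of the parabolic BGG category $\cO^\frp$ has finite length, and hence that $\cO^\frp$ is itself artinian and noetherian. The reduction proceeds via the forgetful functor $\omega \colon \cO^P \to \cO^\frp$ introduced above.

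First I would record two formal properties of $\omega$. Faithfulness is immediate from the definition of morphisms in $\cO^P$. Exactness follows from the preceding lemma together with the analogous construction for cokernels: any finite-dimensional subquotient of a locally analytic $P$-representation inherits a locally analytic structure, so kernels and cokernels in $\cO^P$ are computed on the underlying $\frg$-modules.

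The key step -- and the only point that needs a short verification -- is that $\omega$ reflects isomorphisms. Given a morphism $f \colon \uA \to \uB$ in $\cO^P$ whose image $\omega(f)$ is an isomorphism of $\frg$-modules, the inverse $\omega(f)^{-1}$ is automatically $P$-equivariant: substituting $a = \omega(f)^{-1}(b)$ into the intertwining relation $f(\tau_A(p)(a)) = \tau_B(p)(f(a))$ yields $\tau_A(p)(\omega(f)^{-1}(b)) = \omega(f)^{-1}(\tau_B(p)(b))$. Hence $f$ is already an isomorphism in $\cO^P$.

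Combining these, any strictly increasing chain $\uM_0 \subsetneq \uM_1 \subsetneq \cdots$ of subobjects of a fixed $\uM \in \cO^P$ maps under $\omega$ to a strictly increasing chain of submodules of $\omega(\uM)$ in $\cO^\frp$: if one of the inclusions $\omega(\uM_i) \subseteq \omega(\uM_{i+1})$ were an equality, then the monomorphism $\uM_i \hookrightarrow \uM_{i+1}$ would become an isomorphism under $\omega$, hence an isomorphism in $\cO^P$ by the preceding paragraph, contradicting strictness. Since $\omega(\uM)$ has finite length, such a chain must terminate. The descending chain condition follows by the identical argument. No serious obstacle arises here: all the content sits in the classical BGG finiteness result, and the formalism of $\omega$ does the rest.
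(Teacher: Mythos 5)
Your proof is correct and follows essentially the same route as the paper: the paper's own argument simply applies the forgetful functor $\omega$ to any ascending or descending chain of subobjects and invokes the fact that $\cO^\frp$ is noetherian and artinian. Your additional verification that $\omega$ reflects isomorphisms (so that strict inclusions stay strict after forgetting the $P$-action) just makes explicit a point the paper leaves implicit.
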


\Pf The category $\cO^\frp$ is noetherian and artinian. Hence the claim follows by applying the forgetful functor 
$\omega$ to any descending or ascending chain of subobjects in $\cO^P.$
\qed

\vskip8pt

\begin{lemma}\label{Lemma_gen_W}
Let $\uM = (M,\tau)$ be an object of $\cO^P$. Then there is a finite-dimensional locally analytic $P$-representation
$W \sub M$ which generates $M$ as a $U(\frg)$-module.
\end{lemma}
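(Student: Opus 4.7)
The plan is to read off the conclusion directly from axiom (i) of the definition of $\cO^P$. Since $M$ is an object of $\cO^\frp$, it is by definition finitely generated as a $U(\frg)$-module; choose generators $m_1, \ldots, m_r \in M$. Apply condition (i) to write $M = \bigcup_i M_i$ as an increasing union of finite-dimensional, $\frp$- and $P$-stable $K$-subspaces $M_i$ on which $P$ acts locally analytically. Because the filtration is increasing and there are only finitely many $m_j$, there exists an index $i^*$ such that $\{m_1,\ldots,m_r\} \subset M_{i^*}$.

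Set $W := M_{i^*}$. By construction $W$ is finite-dimensional, $P$-stable, and the restricted $P$-action is locally analytic. Moreover $W$ contains all of the chosen generators, hence
\[
U(\frg) \cdot W \;\supseteq\; U(\frg) \cdot \{m_1,\ldots,m_r\} \;=\; M,
\]
so $W$ generates $M$ as a $U(\frg)$-module.

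There is essentially no obstacle here: the whole point of baking axiom (i) into the definition of $\cO^P$ is to guarantee that every vector lies in a finite-dimensional locally analytic $P$-subrepresentation, and finiteness of the generating set of the underlying $U(\frg)$-module then lets us sweep all generators into a single member of the filtration. It is worth noting that compatibility condition (ii) plays no role in this lemma; it only enters when one needs to relate the $P$-action to the $\frg$-action (e.g.\ to extend the $U(\frg)$-module structure on $M$ to a $D(\frg,P)$-module structure), whereas here we only need $P$-invariance of $W$ and the locally analytic character of its $P$-representation.
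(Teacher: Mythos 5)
Your proposal is correct and is essentially the paper's own proof: the authors likewise note that $M$ is finitely generated over $U(\frg)$ and that the finitely many generators lie in one of the finite-dimensional locally analytic $P$-stable subspaces $M_i$ from condition (i) of the definition of $\cO^P$, which they take as $W$. Your version just makes explicit the use of the increasing filtration to sweep all generators into a single $M_{i^*}$, and your remark that condition (ii) is not needed here is accurate.
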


\Pf By definition the object $M$ is finitely generated as a $U(\frg)$-module. The finitely many generators are  contained in a locally analytic finite-dimensional $P$-representation $M_i \subset M$. So we may set $W = M_i$. \qed

\vskip8pt

Let $M=M(\lambda)=U(\frg) \otimes_{U(\frb)} K_\lambda \in \cO$ be an ordinary Verma module with respect to a homomorphism $\lambda:\frt \to K$. It is easy to see that any such $\lambda$ can be lifted to a locally analytic character $\tilde{\lambda} : T \to K^*$, i.e., the derivative of $\tilde{\lambda}$ coincides with $\lambda$. The character $\tilde{\lambda}$ is uniquely determined up to a smooth character of $T$. By integrating the action of $\fru$ to an action of $U$ on $M(\lambda)$, cf. \cite[Lemma 3.2]{OS2}, we deduce following

\begin{lemma}\label{locanVerma}
There is a unique object $\uM(\tilde{\lambda})$ in $\cO^B$ with the properties that 

\vskip8pt

(i) $\omega(\uM(\tilde{\lambda})) = M(\lambda)$, and 

\vskip5pt

(ii) $B$ acts on the highest weight vector $1 \otimes 1 \in M(\lambda)$ via the locally analytic character $B \twoheadrightarrow T \stackrel{\tilde{\lambda}}{\lra} K^*$.
\end{lemma}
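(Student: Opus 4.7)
The plan is to use the factorization $B = T \ltimes U$: I will construct actions of $T$ and $U$ on $M = M(\lambda)$ separately, glue them into a $B$-representation $\tau$, and verify condition (ii) of the definition of $\cO^P$ on $T$ and on $U$; condition (ii) on all of $B$ will then follow by composition, and uniqueness will drop out from condition (ii) alone.

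For the $T$-action, the weights of $M(\lambda)$ all have the form $\mu = \lambda - \beta$ with $\beta$ a non-negative integer combination of positive roots, viewed as an algebraic character of $T$. I define a locally analytic character $\tilde\mu := \tilde\lambda \cdot \beta^{-1}$ of $T$, whose derivative is $\mu$, and let $T$ act on the $\mu$-weight space by $\tilde\mu$. By construction these characters satisfy the key identity $\widetilde{\mu + \gamma}(t) = \tilde\mu(t)\,\gamma(t)$ for every root $\gamma$ such that $\mu$ and $\mu + \gamma$ are both weights of $M(\lambda)$. For the $U$-action, the bound $\mu \leq \lambda$ on the weights forces every element of $\fru$ to act locally nilpotently on $M(\lambda)$, so the exponential series $\exp(\frx)\cdot m := \sum_{n \geq 0} (\frx^n \cdot m)/n!$ terminates on every vector and defines a locally analytic action $\tau_U \colon U \to \End_K(M)^\times$, as in \cite[Lemma 3.2]{OS2}. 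Setting $\tau(tu) := \tau_T(t)\,\tau_U(u)$ defines a candidate for the representation.

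To check $\tau$ is a group homomorphism one needs the conjugation relation $\tau_T(t)\,\tau_U(u)\,\tau_T(t)^{-1} = \tau_U(tut^{-1})$; by the series definition of $\tau_U$ this reduces to $\tau_T(t) \circ \frx \circ \tau_T(t)^{-1} = \Ad(t)(\frx)$ acting on weight vectors for $\frx \in \fru$, which is precisely the key identity above. The same calculation, applied with $\frx$ ranging over $\frg$, yields condition (ii) of the definition of $\cO^P$ for $p \in T$. For $p = \exp(y) \in U$ and $\frx \in \frg$, condition (ii) becomes the classical identity $\exp(y_M)\,\frx_M\,\exp(-y_M) = \exp(\ad y)(\frx)_M$ as operators on $M$, which is a direct manipulation of the series defining $\tau_U$. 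Condition (ii) for a general $p = tu \in B$ then follows by composition. To verify condition (i), any finite set of weight vectors generates a finite-dimensional $U(\fru)$-submodule, because the set of weights of $M(\lambda)$ above a fixed one is finite and each weight space is finite-dimensional; such a submodule is automatically $T$-stable (as a sum of weight spaces) and $U$-stable, and the induced $B$-action on it is algebraic, in particular locally analytic, with derivative equal to the restriction of the $\frb$-action on $M(\lambda)$.

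For uniqueness, any $\tau'$ satisfying the stated properties must act on $v_0 := 1 \otimes 1$ by $\tilde\lambda$ on $T$ and trivially on $U$. Every element of $M(\lambda)$ has the form $y \cdot v_0$ for some $y \in U(\fru^-)$, and iterating condition (ii) along monomials in $\frg$ gives $\tau'(p)(y v_0) = \Ad(p)(y) \cdot \tau'(p)(v_0)$, which determines $\tau'$ completely. The main obstacle, if any, is the bookkeeping in the semidirect-product verification and in the three cases of condition (ii); but everything reduces to the single numerical identity $\widetilde{\mu + \gamma}(t) = \tilde\mu(t)\,\gamma(t)$ and to the formal identity $\Ad(\exp y) = \exp(\ad y)$ on $\frg$.
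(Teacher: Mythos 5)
Your proposal is correct and follows essentially the same route as the paper, which disposes of the lemma in one line by lifting $\lambda$ to $\tilde{\lambda}$, letting $T$ act on weight spaces through the twists of $\tilde{\lambda}$ by algebraic characters, and integrating the locally nilpotent $\fru$-action to $U$ as in \cite[Lemma 3.2]{OS2}; your write-up just makes the $T \ltimes U$ gluing, the $\Ad$-equivariance checks, and the uniqueness argument explicit. One cosmetic slip: the $B$-action on each finite-dimensional piece is only locally analytic, not algebraic (the characters $\tilde{\lambda}\cdot\beta^{-1}$ need not be algebraic), but locally analytic is all that condition (i) of the definition of $\cO^P$ requires.
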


Consider now two such locally analytic characters $\tilde{\lambda}$, $\tilde{\mu}$. Every $G$-homomorphism $\uM(\tilde{\mu}) \ra \uM(\tilde{\lambda})$ induces a 
$U(\frg)$-homomorphism $\omega(\uM(\tilde{\mu})) = M(\mu) \ra \omega (\uM(\tilde{\lambda})) = M(\lambda)$. Up to scalar there is at most one such map, cf. \cite{H1}. More precisely, one has

$$\dim_K {\rm Hom}_\frg(M(\mu), M(\lambda))=\left\{\begin{array}{cc} 1 & \mu \uparrow \lambda  \\0 & else 
\end{array}\right.$$

\vskip8pt

where $\uparrow$ is the transitive closure of the relation $\mu \uparrow \lambda$ if $\mu=\lambda$ or 
there is some simple reflection $s \in S$ such that $\mu=s\cdot \lambda < \lambda$. 

\vskip8pt

Suppose that $\dim \Hom_\frg(M(\mu), M(\lambda))=1$
and that $\tilde{\lambda}$ is fixed. The difference $\mu - \lambda$ is integral as the highest weight vectors differ by multiplication
with an unipotent element. Hence we may lift $\mu-\lambda$ to an algebraic  homomorphism $\widetilde{\mu-\lambda}: T \to K^*$. Set 

$$\tilde{\mu} := \tilde{\lambda} \cdot \widetilde{\mu-\lambda}$$

\vskip8pt

which is a locally analytic character of $T.$ 
Then we get a non-trivial map $\uM(\tilde{\mu}) \to \uM(\tilde{\lambda})$.

\begin{dfn} Let $\tilde{\lambda}, \tilde{\mu}:T \to K^ \ast$ be two locally analytic characters with derivatives $\lambda$, $\mu$, respectively. We write $\tilde{\mu} \uparrow \tilde{\lambda}$ if and only if  $\mu \uparrow \lambda$ and 
$\tilde{\mu} - \tilde{\lambda} \in X^\ast(T)$ is an  algebraic character.
\end{dfn}

The following result is then immediate.

\begin{prop}
Let $\tilde{\lambda}$, $\tilde{\mu}$ be locally analytic characters of $T$ with derivatives $\lambda$, $\mu$, respectively. Then

$$\dim_K {\rm Hom}_{\cO^B}(\uM(\tilde{\mu}), \uM(\tilde{\lambda})) = \left\{\begin{array}{cc} 1 & \tilde{\mu} \uparrow \tilde{\lambda} \\ \\0 & else 

\end{array}\right. .$$
\end{prop}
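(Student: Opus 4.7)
The plan is to reduce the statement to the already established $\frg$-module computation via the forgetful functor $\omega$. Since morphisms in $\cO^B$ are by definition $U(\frg)$-module maps satisfying an additional compatibility, $\omega$ is faithful, and therefore induces an injection
\[
\Hom_{\cO^B}(\uM(\tilde\mu),\uM(\tilde\lambda)) \hookrightarrow \Hom_\frg(M(\mu),M(\lambda)).
\]
Thus the left-hand dimension is at most $1$, and it vanishes whenever $\mu \not\uparrow \lambda$. The real task is, assuming $\mu \uparrow \lambda$, to determine for which lifts $\tilde\mu$ of $\mu$ the (up to scalar unique) $U(\frg)$-homomorphism $\phi \colon M(\mu) \to M(\lambda)$ actually underlies a morphism in $\cO^B$.

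I would first observe that $\phi$ is automatically $U$-equivariant. Indeed, the locally analytic $U$-action on each $\uM(\tilde\nu)$ was constructed in Lemma \ref{locanVerma} by integrating the $\fru$-action, and the uniqueness of integration of locally finite $\fru$-actions into locally analytic $U$-actions forces any $\fru$-equivariant $K$-linear map to be $U$-equivariant. Since $B=TU$, it remains to verify $T$-equivariance. The compatibility condition (ii) in the definition of $\cO^B$, combined with the $\frg$-module structure and the cyclicity of $v_\nu$ inside $M(\nu)$, pins down the $T$-action on $\uM(\tilde\nu)$ uniquely from its value on the highest weight vector $v_\nu$. Hence $T$-equivariance of $\phi$ everywhere is equivalent to $T$-equivariance of $\phi$ on $v_\mu$.

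The core computation is then straightforward. On the source, $t\cdot v_\mu = \tilde\mu(t)\,v_\mu$, so $\phi(t\cdot v_\mu) = \tilde\mu(t)\,\phi(v_\mu)$. On the target, $\phi(v_\mu)$ is a nonzero weight vector of weight $\mu$ inside $M(\lambda)$, and we may write $\phi(v_\mu)=Y\cdot v_\lambda$ for some $Y\in U(\fru^-)$ of weight $\mu-\lambda$. The adjoint-compatibility axiom gives
\[
t\cdot Y\cdot v_\lambda \;=\; \Ad(t)(Y)\cdot (t\cdot v_\lambda) \;=\; \widetilde{\mu-\lambda}(t)\,\tilde\lambda(t)\,Y\cdot v_\lambda,
\]
using that $\Ad(t)$ acts on a weight-$(\mu-\lambda)$ element of $U(\fru^-)$ by the algebraic character $\widetilde{\mu-\lambda}$. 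Comparing, $T$-equivariance at $v_\mu$ holds for all $t\in T$ if and only if $\tilde\mu(t)=\tilde\lambda(t)\widetilde{\mu-\lambda}(t)$, i.e.\ $\tilde\mu-\tilde\lambda=\widetilde{\mu-\lambda}\in X^*(T)$, which is exactly the relation $\tilde\mu\uparrow\tilde\lambda$.

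The only subtlety I anticipate is the step that $T$-equivariance of $\phi$ at $v_\mu$ propagates to all of $M(\mu)$. I would handle this by writing an arbitrary vector in $M(\mu)$ as $Y'\cdot v_\mu$ with $Y'\in U(\fru^-)$ and computing, using the $\frg$-equivariance of $\phi$ and the adjoint compatibility on both source and target,
\[
\phi(t\cdot Y'\cdot v_\mu) = \phi\bigl(\Ad(t)(Y')\cdot t\cdot v_\mu\bigr) = \Ad(t)(Y')\cdot\phi(t\cdot v_\mu) = \Ad(t)(Y')\cdot t\cdot \phi(v_\mu) = t\cdot \phi(Y'v_\mu),
\]
so that the verification at $v_\mu$ really does suffice. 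Everything else is formal.
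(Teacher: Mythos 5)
Your proposal is correct and follows essentially the same route as the paper, which treats the proposition as immediate from the preceding discussion: reduce via the faithful forgetful functor $\omega$ to the classical fact $\dim_K\Hom_\frg(M(\mu),M(\lambda))\le 1$, and then observe that the lifted $T$-action on the image of the highest weight vector is $\tilde\lambda\cdot\widetilde{\mu-\lambda}$, so the unique $\frg$-map is a morphism in $\cO^B$ exactly when $\tilde\mu\uparrow\tilde\lambda$. One small caveat: your appeal to ``uniqueness of integration'' for the $U$-equivariance is not quite the right justification (integration of $\fru$-actions to locally analytic $U$-actions is not unique in general, e.g.\ smooth twists), but the conclusion holds anyway because the $U$-action in Lemma \ref{locanVerma} is constructed by the finite exponential series, with which any $\fru$-equivariant map visibly commutes.
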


\vskip8pt

\setcounter{enumi}{0}

\section{Locally analytic representations}

All distribution algebras appearing in this paper are tacitly assumed to be distribution algebras with coefficient field $K$, and we write $D(H)$ for the distribution algebra $D(H,K)$.

\vskip8pt

\begin{para}{\it Induced representations and their duals.} Let $H'$ be a closed locally $L$-analytic subgroup of a locally $L$-analytic group $H$. Let  $(V,\rho)$ be a locally analytic representation of $H'$ on a $K$-vector space $V$, and consider the induced locally analytic representation

$$\Ind^H_{H'}(V) = \Big\{f \in C^{an}(H,V) \; \midc \; \forall h_1 \in H', \forall h \in H: \; f(hh_1) = \rho(h_1^{-1}) \cdot f(h) \; \Big\} \;.$$

\vskip8pt

The group $H$ acts on this vector space by $(h \cdot f)(x) = f(h^{-1}x)$. There is a canonical map of $D(H,K)$-modules

\vspace{-0.3cm}
\begin{numequation}\label{dual_iso}
D(H,K) \otimes_{D(H',K)} V' \ra (\Ind^H_{H'}V)'_b \;, \hskip8pt \delta \otimes \vphi \mapsto \delta \cdot \vphi \;,
\end{numequation}

with $(\delta \cdot \vphi)(f) = \delta(g \mapsto \vphi(f(g^{-1})))$.
\end{para}

\vskip8pt

\begin{prop} Suppose $H = C \cdot H'$ with a compact locally $L$-analytic subgroup $C \sub H$, such that $C \cap H'$ is topologically finitely generated, and suppose, moreover, that $V$ is finite-dimensional. Then the map \ref{dual_iso} is an isomorphism.
\end{prop}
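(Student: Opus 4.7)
The plan is to reduce the proposition to the compact case via the decomposition $H = C \cdot H'$, and then invoke a known duality theorem for induced representations of compact locally analytic groups.

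On the representation side, restriction of functions from $H$ to $C$ gives an isomorphism of locally analytic $C$-representations
$$\Ind^H_{H'}(V) \xrightarrow{\sim} \Ind^C_{C\cap H'}(V) \;.$$
Indeed, the transformation rule $f(hh_1) = \rho(h_1^{-1}) f(h)$ together with $H = CH'$ forces any $f$ to be determined by its restriction to $C$, while conversely any $f_0 \in \Ind^C_{C \cap H'}(V)$ extends to $H$ via $f(c h') := \rho(h'^{-1}) f_0(c)$; well-definedness is a direct calculation using that $c_1 h'_1 = c_2 h'_2$ forces $c_2^{-1}c_1 \in C \cap H'$. Dually, on the distribution side the inclusion $D(C) \hra D(H)$ induces a natural isomorphism
$$D(C) \otimes_{D(C \cap H')} V' \xrightarrow{\sim} D(H) \otimes_{D(H')} V' \;,$$
which is a formal consequence of the decomposition $H = CH'$ at the level of distribution algebras: $D(H)$ is generated as a right $D(H')$-module by $D(C)$, with all relations coming from $D(C \cap H')$. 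Combining these two reductions replaces the original statement by its analogue for the compact pair $(C, C \cap H')$.

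The core of the proof then lies in the compact case: for a compact locally $L$-analytic group $C$, a topologically finitely generated closed subgroup $C'' \sub C$, and a finite-dimensional locally analytic representation $V$ of $C''$, one needs that the canonical map
$$D(C) \otimes_{D(C'')} V' \to (\Ind^C_{C''} V)'_b$$
is a topological isomorphism. To establish this I would realize $\Ind^C_{C''}(V)$ as the space of locally analytic sections of the $V$-bundle associated to $C \to C/C''$ over the compact analytic manifold $C/C''$, and use a finite partition of unity subordinate to an analytic atlas to trivialize the bundle, reducing the assertion to the standard duality between locally analytic functions on a polydisc and distributions there. The finite generation hypothesis on $C''$ ensures that $D(C'')$ is a Fr\'echet--Stein algebra, so the tensor product on the left is well-behaved, while the finite-dimensionality of $V$ lets us dispense with completed tensor products.

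The main obstacle is this compact case, in particular matching the topologies so that the result is a topological (not merely algebraic) isomorphism; this is essentially the content of F\'eaux de Lacroix's duality theorem for induced representations, with refinements by Kohlhaase and Schneider--Teitelbaum. Once it is in hand, the two reductions above assemble to yield the proposition in the stated generality.
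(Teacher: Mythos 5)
Your overall route---restricting along $H = C\cdot H'$ to identify $\Ind^H_{H'}(V)|_C \cong \Ind^C_{C\cap H'}(V)$ and transferring the statement to the compact pair $(C, C\cap H')$---is exactly the argument the paper points to (its proof is a citation of the discussion before Lemma 6.1 in Schneider--Teitelbaum, which proceeds in just this way). The genuine gap is in your treatment of the compact case, and in particular in your account of where the hypothesis that $C\cap H'$ is topologically finitely generated enters. It is not needed to make $D(C\cap H')$ Fr\'echet--Stein: $D(C'')$ is Fr\'echet--Stein for \emph{every} compact locally $L$-analytic group $C''$. Its actual role is the crux of the proof: the kernel of the natural surjection $D(C)\otimes_K V' \ra (\Ind^C_{C\cap H'}V)'_b$ is the $D(C)$-submodule generated by the elements $\delta_h\otimes v' - 1\otimes (h\cdot v')$ with $h \in C\cap H'$, $v'\in V'$; if $h_1,\ldots,h_s$ generate $C\cap H'$ topologically and $V'$ is finite-dimensional, this submodule is generated by the finitely many elements $\delta_{h_i}\otimes v'_j - 1\otimes h_i\cdot v'_j$, hence is a finitely generated submodule of the coadmissible $D(C)$-module $D(C)\otimes_K V'$, hence closed. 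Only then is the quotient $D(C)\otimes_{D(C\cap H')}V'$, with the quotient topology, Hausdorff and coadmissible, so that the continuous bijection onto the strong dual is a topological isomorphism. Your trivialization/partition-of-unity argument identifies $(\Ind^C_{C\cap H'}V)'_b$ with a space of $V'$-valued distributions on $C/(C\cap H')$, but it does not identify that dual with the \emph{algebraic} tensor product $D(C)\otimes_{D(C\cap H')}V'$ carrying the quotient topology---and that identification is precisely the assertion being proved; it is exactly the step for which topological finite generation is used.

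A second, smaller point: the step you call formal, $D(C)\otimes_{D(C\cap H')}V' \cong D(H)\otimes_{D(H')}V'$, is not formal. Dirac distributions only span a dense subspace of $D(H)$, so the claim that $D(H)$ is generated as a right $D(H')$-module by $D(C)$ requires an argument (one uses $H \cong C\times_{C\cap H'} H'$ as locally analytic manifolds together with the same coadmissibility/completed tensor product considerations; this too is part of the Schneider--Teitelbaum discussion the paper cites). So the strategy is the right one and coincides with the paper's source, but the decisive closedness argument---and with it the correct use of the hypotheses on $C$ and on $C\cap H'$---is missing from your proposal.
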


\Pf Then the same arguments as in \cite{ST3}, before Lemma 6.1, show that (\ref{dual_iso}) is an isomorphism of topological vector spaces, if we give the left side the quotient topology of the projective tensor product topology on $D(H,K) \otimes_K V'$. 
(We remark that the projective and inductive tensor product topologies coincide for tensor products of Fr\'echet spaces, cf. \cite[17.6]{S1}.) \qed 

\setcounter{enumi}{0}

\begin{para} We fix as in the previous chapter a standard parabolic subgroup ${\bf P}$ with Levi decomposition ${\bf P=L_P \cdot U_P}$ where ${\bf T \subset L_P}$. Let $\uM = (M, \tau)$ be an object of $\cO^P$. By \ref{Lemma_gen_W} we may choose a finite-dimensional locally analytic representation 
$W \sub M$ of $P$ which generates $M$ as 
$U(\frg)$-module. We let $\frd$ be the kernel of the canonical map $U(\frg) \otimes_{U(\frp)} W \ra M$, so that we have the tautological exact sequence

\vspace{-0.3cm}
\begin{numequation}\label{basicsequence}
0 \lra \frd \lra U(\frg) \otimes_{U(\frp)} W \lra M \lra 0 \;.
\end{numequation}

Recall from \cite{OS2} that we there is a pairing

\vspace{-0.3cm}
\begin{numequation}\label{pair_K}
\langle \cdot \,, \cdot \rangle: \left(D(G) \otimes_{D(P)} W\right)  \otimes_K \Ind^G_P(W') \lra K \;,
\end{numequation}

which identifies the left hand side with the topological dual of the right hand side and vice versa. We define the closed subspace $\Ind^G_P(W')^\frd \sub \Ind^G_P(W')$ by

\vspace{-0.3cm}
\begin{numequation}\label{basicdfn}
\Ind^G_P(W')^\frd = \{ f \in \Ind^G_P(W') \midc \mbox{ for all } \delta \in \frd : \langle \delta, f \rangle_{C^{an}(G,K)} = 0_{C^{an}(G,K)} \} \;.
\end{numequation}

Here we have used the pairing

\vspace{-0.3cm}
\begin{numequation}\label{pair_C(G,K)}
\begin{array}{rccc}
\langle \cdot , \cdot \rangle_{C^{an}(G,K)}: & \left(D(G) \otimes_{D(P)} W \right) \otimes_K \Ind^G_P(W') & \lra & C^{an}(G,K) \\
&&&\\
& (\delta \otimes w) \otimes f & \mapsto & \Big[ g \mapsto \big(\delta \cdot_r (f(\cdot)(w))\big)(g)\Big]
\end{array}
\end{numequation}

Here we have $\big(\delta \cdot_r (f(\cdot)(w))\big)(g) = \delta(x \mapsto f(gx)(w))$. The restriction $\Ind^G_P(W')|_{G_0}$ to $G_0 = \bG_0(O_L)$ is $\Ind^{G_0}_{P_0}(W')$, and is strongly admissible, as follows from \ref{dual_iso}. As a closed subrepresentation of $\Ind^G_P(W')$, the representation $\Ind^G_P(W')^\frd$ is therefore strongly admissible too. 
\end{para}

\begin{para} In order to conveniently describe the dual space of $\Ind^G_P(W')^\frd$, we consider the subring $D(\frg,P)$ generated by $U(\frg)$ and $D(P)$ inside $D(G)$. It is easy to see that on $M \in \cO^P$ there is a unique $D(\frg,P)$-module structure with the following properties: 

\vskip8pt

(i) The action of $U(\frp)$, as a subring of $U(\frg)$, coincides with the action of $U(\frp)$ as a subring of $D(P)$. \vskip5pt

(ii) The Dirac distributions $\delta_g \in D(P)$ act like group elements $g \in P$. 

\vskip8pt

Any morphism $M_1 \ra M_2$ in $\cO^P$ is automatically a homomorphism of $D(\frg,P)$-modules. 
\end{para}

\begin{prop}\label{basiciso} (i) The canonical map

$$\iota: M  = \left(U(\frg) \otimes_{U(\frp)} W\right)/\frd  \lra  \left(D(G) \otimes_{D(P)} W \right)/ \, D(G)\frd$$

\vskip8pt

extends to an isomorphism of $D(G)$-modules 

$$D(G) \otimes_{D(\frg,P)} M \cong \left(D(G) \otimes_{D(P)} W \right)/ \, D(G)\frd
\;.$$

\vskip8pt

(ii) The canonical map

$$\iota: M  = \left(U(\frg) \otimes_{U(\frp)} W\right)/\frd  \lra  \left(D(G_0) \otimes_{D(P_0)} W \right)/ \, D(G_0)\frd$$

\vskip8pt

extends to an isomorphism of $D(G_0)$-modules 

$$D(G_0) \otimes_{D(\frg,P_0)} M \cong \left(D(G_0) \otimes_{D(P_0)} W \right)/ \, D(G_0)\frd
\;.$$

\vskip8pt

(iii) There are canonical isomorphisms

$$D(G) \otimes_{D(\frg,P)} M  \cong D(G_0) \otimes_{D(\frg,P_0)} M \cong \left( \Ind^G_P(W')^\frd \right)' \,\,.$$

\vskip8pt

\end{prop}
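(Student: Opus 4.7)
The three parts are closely linked, so I would prove them in the given order. For part (i), the strategy is to realize both sides as quotients of $D(G) \otimes_{D(P)} W$ by the same relations. First I would observe that the short exact sequence (\ref{basicsequence}) is canonically a sequence of $D(\frg, P)$-modules: on $U(\frg) \otimes_{U(\frp)} W$ the subring $U(\frg)$ acts by left multiplication and $P$ acts diagonally via $\Ad$ on $U(\frg)$ and $\tau$ on $W$, these actions being compatible with the $U(\frp)$-balancing precisely because of axiom (ii) in the definition of $\cO^P$. The submodule $\frd$ is $\frg$-stable by construction and $P$-stable because it is the kernel of a morphism in $\cO^P$. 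Next I would exhibit mutually inverse maps
$$D(G) \otimes_{D(\frg, P)} \bigl(U(\frg) \otimes_{U(\frp)} W\bigr) \;\rightleftarrows\; D(G) \otimes_{D(P)} W,$$
given by $\delta \otimes (u \otimes w) \mapsto \delta u \otimes w$ and $\delta \otimes w \mapsto \delta \otimes (1 \otimes w)$; these are well defined and mutually inverse by moving $U(\frg) \sub D(\frg, P)$ across the tensor product. Right-exactness of $D(G) \otimes_{D(\frg, P)} -$ then turns (\ref{basicsequence}) into the stated isomorphism, with the image of $D(G) \otimes_{D(\frg, P)} \frd$ identified with $D(G) \cdot \frd$.

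Part (ii) is the same argument with $G_0$, $P_0$, $D(\frg, P_0)$ in place of $G$, $P$, $D(\frg, P)$. For the first isomorphism of part (iii), I would invoke the Iwasawa-type decomposition $G = G_0 \cdot P$, valid because $G_0 = \bG_0(O_L)$ already acts transitively on the flag variety $G/P$, together with $G_0 \cap P = P_0$. This yields the Mackey-type identification $\Ind^G_P(W')|_{G_0} = \Ind^{G_0}_{P_0}(W')$; dualizing via the Proposition preceding the statement produces a canonical isomorphism $D(G) \otimes_{D(P)} W \cong D(G_0) \otimes_{D(P_0)} W$ of $D(G_0)$-modules. Since $\frd$ is $D(P)$-stable, the submodules $D(G) \cdot \frd$ and $D(G_0) \cdot \frd$ correspond under this isomorphism. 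Combined with parts (i) and (ii) this gives the first iso of (iii). For the second isomorphism, I would use the pairing (\ref{pair_K}) under which (\ref{basicdfn}) characterizes $\Ind^G_P(W')^\frd$ as the annihilator of $D(G) \cdot \frd$; the strong dual of this closed subrepresentation is therefore the quotient $(D(G) \otimes_{D(P)} W) / D(G) \cdot \frd$, which by (i) is $D(G) \otimes_{D(\frg, P)} M$.

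The main technical obstacle is the closedness of $D(G) \cdot \frd$ in the strong dual of $\Ind^G_P(W')$: without closedness, the annihilator computation only yields the closure of $D(G) \cdot \frd$, not the module itself. However, after restricting to $G_0$ this follows from Fr\'echet--Stein theory. The Proposition preceding our statement shows that $\Ind^{G_0}_{P_0}(W')$ is strongly admissible, so its strong dual is a coadmissible $D(G_0)$-module, and every finitely generated submodule of a coadmissible module is automatically closed. Here $D(G_0) \cdot \frd$ is finitely generated since $\frd \sub U(\frg) \otimes_{U(\frp)} W$ is itself finitely generated as a $U(\frg)$-module (by Noetherianity of $\cO^\frp$). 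Once this closedness is in hand, the three isomorphisms of the proposition combine into a single natural chain.
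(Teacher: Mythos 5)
Your proposal is correct and takes essentially the same route as the paper's proof, which simply observes that the arguments of \cite[3.3, 3.7]{OS2} never use $M \in \cO^\frp_\alg$: namely, identify $D(G) \otimes_{D(\frg,P)} \bigl(U(\frg) \otimes_{U(\frp)} W\bigr)$ with $D(G) \otimes_{D(P)} W$ (using that $\frd$ and the sequence \ref{basicsequence} live in $\cO^P$, hence are $D(\frg,P)$-stable) and apply right-exactness for (i)--(ii), then use $G = G_0 P$, strong admissibility of $\Ind^{G_0}_{P_0}(W')$, and closedness of the finitely generated submodule $D(G_0)\frd$ inside the coadmissible dual for (iii). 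The only places you gloss are routine verifications (e.g.\ that the canonical map $U(\frg)\otimes_{U(\frp)}W \to D(G)\otimes_{D(P)}W$ is equivariant for all of $D(P)$, not just Dirac distributions, which follows by density of the group ring and continuity), and these are exactly the details handled in the cited proofs.
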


\Pf The proofs of \cite[3.3, 3.7]{OS2} do not use that $M$ is in $\cO^\frp_\alg$ and carry over to the present setting. \qed

\vskip8pt

Denote by $\Rep^{\rm loc. an.}_K(G)$ the category of locally analytic representations of $G$ on 
barreled locally convex Hausdorff $K$-vector spaces. We define the functor

$$\cF^G_P: \cO^P \lra \Rep^{\rm loc. an.}_K(G) \hskip8pt \mbox{ by } \hskip8pt \cF^G_P(\uM) = \left( D(G) \otimes_{D(\frg,P)} M \right)' \;.$$

\vskip8pt

\begin{exam} Let $\tilde{\lambda}$ be a locally analytic character of $B$, and denote $\uM(\tilde{\lambda})$ the object of $\cO^B$ introduced in \ref{locanVerma}. Then we have $\cF^G_B(\uM(\tilde{\lambda})) = \Ind^G_B(\tilde{\lambda}^{-1})$. 
\end{exam}

\vskip8pt

\setcounter{enumi}{0}

\begin{prop}\label{exact} The functor $\cF^G_P$ is exact.
\end{prop}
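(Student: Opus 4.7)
The plan is to follow the argument of \cite[Prop.~3.8]{OS2} in the same spirit, adjusting only where the objects of $\cO^P$ (which need not lie in $\cO^\frp_\alg$) require a little more care. Since by the preceding discussion $\cF^G_P(\uM) \cong \Ind^G_P(W')^\frd$ is a closed subrepresentation of a strongly admissible representation, it is itself strongly admissible. On the category of strongly admissible representations continuous $K$-linear duality is exact. Combined with the isomorphism $\cF^G_P(\uM)' \cong D(G) \otimes_{D(\frg,P)} M \cong D(G_0) \otimes_{D(\frg,P_0)} M$ of Proposition \ref{basiciso}(iii), the exactness of $\cF^G_P$ reduces to showing that the right exact functor
$$F:\cO^P \lra \text{(left $D(G_0)$-modules)}, \qquad \uM \mapsto D(G_0)\otimes_{D(\frg,P_0)} M,$$
is in fact exact.

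So fix a short exact sequence $0 \to \uM_1 \to \uM_2 \to \uM_3 \to 0$ in $\cO^P$. Using Lemma \ref{Lemma_gen_W} together with the fact that $M_1$ is finitely generated over $U(\frg)$, I can choose a finite-dimensional locally analytic $P$-stable subspace $W \sub M_2$ which generates $M_2$ and is large enough so that $W_1 := W \cap M_1$ generates $M_1$. Then $\bar W := W/W_1$ sits inside $M_3$ and generates it as a $U(\frg)$-module. By PBW the functor $U(\frg)\otimes_{U(\frp)} -$ is exact on locally finite $\frp$-modules, so together with the snake lemma applied to the compatible presentations
$$0 \to \frd_i \to U(\frg)\otimes_{U(\frp)} W_i \to M_i \to 0, \qquad i=1,2,3,$$
(where $W_2 = W$, $W_3 = \bar W$) I obtain a short exact sequence $0 \to \frd_1 \to \frd_2 \to \frd_3 \to 0$.

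Now I exploit the Iwasawa-type decomposition $G_0 = U_{P_0}^{-}\cdot P_0$, which realizes $D(G_0)$ as topologically free over $D(P_0)$ on the right, with basis $D(U_{P_0}^{-})$. Since the $W_i$ are finite-dimensional, this gives an exact sequence
$$0 \to D(G_0)\otimes_{D(P_0)} W_1 \to D(G_0)\otimes_{D(P_0)} W \to D(G_0)\otimes_{D(P_0)} \bar W \to 0.$$
Combined with the identification from Proposition \ref{basiciso}(ii),
$$F(\uM_i) = \big(D(G_0)\otimes_{D(P_0)} W_i\big)/D(G_0)\cdot\frd_i,$$
and the right exactness of $F$ applied to $0 \to \frd_1 \to \frd_2 \to \frd_3 \to 0$, I set up a $3\times 3$ diagram with exact rows and exact middle column, and conclude the exactness of $0 \to F(\uM_1) \to F(\uM_2) \to F(\uM_3) \to 0$ by the standard diagram chase.

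The main obstacle is the verification that the images of $D(G_0)\cdot\frd_1$ in $D(G_0)\otimes_{D(P_0)} W_1$ and of $D(G_0)\cdot\frd_2$ in $D(G_0)\otimes_{D(P_0)} W$ are compatible, in the sense that the preimage of $D(G_0)\cdot\frd_2$ under $D(G_0)\otimes_{D(P_0)} W_1 \hookrightarrow D(G_0)\otimes_{D(P_0)} W$ is precisely $D(G_0)\cdot\frd_1$. This is the only place where something beyond formal nonsense is required, but it is forced by the diagram chase once the exactness of $0 \to \frd_1 \to \frd_2 \to \frd_3 \to 0$ and the flatness of $D(G_0)$ over $D(P_0)$ are established. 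In particular, as the authors remark, the passage from $\cO^\frp_\alg$ to $\cO^P$ affects none of these ingredients, so the proof of \cite[Prop.~3.8]{OS2} applies almost verbatim.
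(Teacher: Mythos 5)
Your reduction to the algebraic statement (via the Schneider--Teitelbaum duality and Proposition \ref{basiciso}) and your first $3\times 3$ diagram, giving $0\to\frd_1\to\frd_2\to\frd_3\to 0$ from the PBW freeness of the generalized Verma modules, match the paper's argument. But the last paragraph contains the real gap: the compatibility you name --- that the preimage of $D(G_0)\cdot\frd_2$ under $D(G_0)\otimes_{D(P_0)}W_1\hookrightarrow D(G_0)\otimes_{D(P_0)}W$ is exactly $D(G_0)\cdot\frd_1$ --- is \emph{not} ``forced by the diagram chase'' once the $\frd$-sequence and flatness of $D(G_0)$ over $D(P_0)$ are in place. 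The chase only reduces it to the injectivity of $D(G_0)\otimes_{D(\frg,P_0)}\frd_3\to D(G_0)\otimes_{D(P_0)}\bar W$, i.e.\ to left exactness of the functor $F$ applied to the subobject $\frd_3$ of the generalized Verma module $U(\frg)\otimes_{U(\frp)}\bar W$ --- an instance of the very statement being proved. Flatness of $D(G_0)$ over $D(P_0)$ is irrelevant here, because the tensor product in question is over $D(\frg,P_0)$, and the difficulty is precisely that a generating $P$-subrepresentation of $\frd_3$ cannot in general be chosen inside $\bar W$. This is where the paper does its real work: decompose $D(G_0)\otimes_{D(P_0)}W_?$ via the Iwahori--Bruhat decomposition $G_0=\coprod_{w\in W/W_P}IwP_0$ into summands $\delta_w D(U^{-,w})\otimes_K W_?$, pass to the Banach completions $D_r$, reduce to $U_r(\fru^{-,w})$ over a uniform pro-$p$ subgroup, and apply the result of Bosch--G\"untzer--Remmert on completions of strict exact sequences of normed modules to identify the kernel with the closure of $\frd_1\cap\bigl(U(\fru^{-,w})\otimes_K W_1\bigr)$, which lies in the image of $D(G_0)\otimes_{D(\frg,P_0)}\frd_1$. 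None of this analytic input appears in your write-up, and asserting it is formal is exactly the mistake the paper's two-page argument exists to avoid.

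Two smaller points. First, the decomposition you invoke, $G_0=U^-_{P,0}\cdot P_0$, is false (already for ${\rm SL}_2(\Zp)$ with $P=B$); the exactness of the middle column must instead be justified by the Iwahori--Bruhat and Iwahori decompositions as above, which is how the paper does it. Second, the paper's citation for the analogous statement in the earlier work is [OS2, 4.2]; and while the authors do say that proof carries over ``with some minor modifications'', your proposal cannot simply defer to it after having claimed the crucial step is a formal consequence --- that claim misidentifies where the content of the proof lies.
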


\Pf  The proof of the analogous statement \cite[4.2]{OS2} for objects $M \in \cO^\frp_\alg$ applies in our current setting too (with some minor modifications). 

\vskip8pt

Alternatively we present here a slightly different proof. By the Schneider-Teitelbaum anti-equivalence of categories \cite[6.3]{ST2}, it suffices to show that the functor

$$\uM \mapsto D(G_0) \otimes_{D(\frg,P_0)} M$$

\vskip8pt

from $\cO^P$ into the category of $D(G_0)$-modules is exact. It is obviously right-exact. To see that it is left exact, let $\uM_1 = (M_1,\tau_1) \sub \uM$ be a subobject in $\cO^P$. Choose a finite-dimensional $P$-representation $W_1 \sub M_1$ which generates $M_1$ as $U(\frg)$-module, and choose a finite-dimensional $P$-representation $W \sub M$ which generates $M$ as $U(\frg)$-module and contains
$W_1$. Putting $M_2 = M/M_1$ and $W_2 = W/W_1$ we get a commutative diagram with exact rows and columns:

$$\xymatrixcolsep{2pc}\xymatrix{
         & 0 \ar[d] & 0 \ar[d] & 0 \ar[d] \\
0 \ar[r] & \frd_1 \ar[r] \ar[d] & U(\frg) \otimes_{U(\frp)} W_1 \ar[r] \ar[d] &
M_1 \ar[r] \ar[d] & 0 \\
0 \ar[r] & \frd \ar[r] \ar[d] & U(\frg) \otimes_{U(\frp)} W \ar[r] \ar[d] &
M \ar[r] \ar[d] & 0 \\
0 \ar[r] & \frd_2 \ar[r] \ar[d] & U(\frg) \otimes_{U(\frp)} W_2 \ar[r] \ar[d] &
M_2 \ar[r] \ar[d] & 0\\
         & 0  & 0  & 0 }$$

\vskip8pt

Here, by definition, $\frd_2 = \frd/\frd_1$, and the middle column of this diagram is exact, because each of the generalized Verma modules $U(\frg) \otimes_{U(\frp)} W_?$ is isomorphic to $U(\fru^-_\frp)_K \otimes_K W_? \,$, as a $K$-vector space. This gives rise to the following
diagram of $D(G_0)$-modules with exact rows:

$$\xymatrixcolsep{2pc}\xymatrix{
          & 0 \ar[d] & 0 \ar[d] \\
D(G_0) \otimes_{D(\frg,P_0)} \frd_1 \ar[r] \ar[d] &  D(G_0) \otimes_{D(P_0)} W_1 \ar[r] \ar[d] &
D(G_0) \otimes_{D(\frg,P_0)} M_1 \ar[r] \ar[d] & 0 \\
D(G_0) \otimes_{D(\frg,P_0)} \frd \ar[r] \ar[d] &  D(G_0) \otimes_{D(P_0)} W \ar[r] \ar[d] & D(G_0) \otimes_{D(\frg,P_0)} M \ar[r] \ar[d] & 0 \\
D(G_0) \otimes_{D(\frg,P_0)} \frd_2 \ar[r] \ar[d] &  D(G_0) \otimes_{D(P_0)} W_2 \ar[r] \ar[d] & D(G_0) \otimes_{D(\frg,P_0)} M_2 \ar[r] \ar[d] & 0 \\
        0 & 0 & 0 }$$

\vskip8pt

Here we have used that

$$D(G_0) \otimes_{D(\frg,P_0)} \left(U(\frg) \otimes_{U(\frp)} W_? \right)  = D(G_0) \otimes_{D(P_0)} W_? \,,$$

\vskip8pt

cf. \ref{basiciso}. The column in the middle is exact too. This follows from the Bruhat decomposition (cf. below). By the snake lemma, the right column is exact if the map

$$D(G_0) \otimes_{D(\frg,P_0)} \frd_2 \lra  D(G_0) \otimes_{D(P_0)} W_2$$

\vskip8pt

is injective.  In other words, we may assume that $M = U(\frg) \otimes_{U(\frp)} W$ is a generalized Verma module, where $W$ is equipped with a locally analytic $P$-representation which lifts the $\frp$-representation on $W$. In this way $M$ becomes an object of $\cO^P$. Let $M_1 \sub M$ be a subobject in $\cO^P$, and write $M_1$ as a quotient

$$0 \ra \frd_1 \ra U(\frg) \otimes_{U(\frp)} W_1 \ra M_1 \ra 0 \;,$$

\vskip8pt

so that we have an exact sequence

$$0 \ra \frd_1 \ra U(\frg) \otimes_{U(\frp)} W_1 \ra U(\frg) \otimes_{U(\frp)} W \;.$$

\vskip8pt

Our aim is to show that the following complex is exact:

$$D(G_0) \otimes_{D(\frg,P_0)} \frd_1  \ra  D(G_0) \otimes_{D(P_0)} W_1 \ra D(G_0) \otimes_{D(P_0)} W \,.$$

\vskip8pt

Note that in general we cannot choose $W_1$ to be a subspace of $W$ (in which case we are done).
Let $I \sub G_0$ be the Iwahori subgroup whose image in $\bG(O_L/(\pi_L))$ is $\bB(O_L/(\pi_L))$. Then we have the Bruhat decomposition

\vspace{-0.3cm}
\begin{numequation}\label{Bruhat}
G_0 = \coprod_{w \in W/W_P} IwP_0
\end{numequation}

and for every $w \in W$ an Iwahori decomposition

\vspace{-0.3cm}
\begin{numequation}\label{Iwahori}
I = U_w^- \cdot P_{0,w}  =  P_{0,w} \cdot U_w^-
\end{numequation}

with $U_w^- = I \cap wU^-w^{-1}$ and $P_{0,w} = I \cap wP_0w^{-1}$ (cf. \cite{Ti}, 3.1.1, \cite{OS2}, 3.3.2). Then we have

$$IwP_0 = U_w^-P_{w,0}wP_0 = U_w^-w(w^{-1}P_{w,0}w)P_0 = U_w^-wP_0 = w(w^{-1}U_w^-w)P_0 \,.$$

\vskip8pt

Put $U^{-,w} = w^{-1}U_w^-w$ which is a subgroup of $U^-$. It follows that

$$D(G_0) \otimes_{D(P_0)} W_1 = \bigoplus_{w \in W/W_P} \delta_w D(U^{-,w}) \otimes_K W_1$$

\vskip8pt

and similarly

$$D(G_0) \otimes_{D(P_0)} W = \bigoplus_{w \in W/W_P} \delta_w D(U^{-,w}) \otimes_K W \,.$$

\vskip8pt

These decompositions are compatible with the map $D(G_0) \otimes_{D(P_0)} W_1 \ra D(G_0) \otimes_{D(P_0)} W$. Hence it suffices to consider the map

$$D(U^{-,w}) \otimes_K W_1 \lra D(U^{-,w}) \otimes_K W \;,$$

\vskip8pt

which itself is the limit of the corresponding maps

\vspace{-0.3cm}
\begin{numequation}\label{with_r}
D_r(U^{-,w}) \otimes_K W_1 \lra D_r(U^{-,w}) \otimes_K W
\end{numequation}

as $r \in \sqrt{|K^*|}$ tends to 1 from below. The projective limit functor being exact in this case, cf. \cite{EGA}, 13.2.4, we are thus reduced to analyze the kernel of this latter map for $r$ sufficiently close to 1. Choose an open normal uniform pro-$p$ subgroup $H \sub U^{-,w}$, and define the norms on $D(H)$ by means of the associated canonical $p$-valuation, cf. \cite[2.2.3, 2.2.6]{OS1}. Define the norm $\|\cdot\|_r$ on $D(U^{-,w})$ by means of the decomposition $D(U^{-,w}) = \bigoplus_{u \in U^{-,w}/H} \delta_u D(H)$. Then we have $D_r(U^{-,w}) = \bigoplus_{u \in U^{-,w}/H} \delta_u D_r(H)$, and the map \ref{with_r} is a direct sum of maps

\vspace{-0.3cm}
\begin{numequation}\label{reduce_to_uniform}
D_r(H) \otimes_K W_1 \lra D_r(H) \otimes_K W \;.
\end{numequation}

Now let $\fru^{-,w} = \Lie(U^{-,w})$ and denote by $U_r(\fru^{-,w})$ the closure of $U(\fru^{-,w})$ in $D_r(H)$. Then $D_r(H)$ is a free $U_r(\fru^{-,w})$-module of finite length. More precisely, if $H_r = H \cap U_r(\fru^{-,w})$, then $D_r(H) = \bigoplus_{h \in H/H_r} \delta_h U_r(\fru^{-,w})$. The map \ref{reduce_to_uniform} is accordingly a direct sum of maps

\vspace{-0.3cm}
\begin{numequation}\label{reduce to enveloping}
U_r(\fru^{-,w}) \otimes_K W_1 \lra U_r(\fru^{-,w}) \otimes_K W \,.
\end{numequation}

Both sides are finite free $U_r(\fru^{-,w})$-modules and have as such a canonical topology induced by $U_r(\fru^{-,w})$. The given map is automatically strict. Moreover, the right hand side of \ref{reduce to enveloping} is simply the completion of $U(\fru^-) \otimes_K W$ with respect to a product norm $\|\cdot\|_r \otimes \|\cdot\|_W$, where $\|\cdot\|_W$ is some $K$-vector space norm on $W$, and the corresponding statement holds for the left hand side. Consider the exact sequence

$$0 \lra \frd_1 \cap \left(U(\fru^{-,w}) \otimes_K W_1\right) \lra U(\fru^{-,w}) \otimes_K W_1 \lra U(\fru^{-,w}) \otimes_K W \,.$$

\vskip8pt

By \cite[Cor. 6 in 1.1.9]{BGR} the corresponding sequence of completed modules is exact too. This shows that the kernel of \ref{reduce to enveloping} is equal to the closure of $\frd_1 \cap \left(U(\fru^{-,w}) \otimes_K W_1\right)$. This closure, however, is contained in the image of $D(G_0) \otimes_{D(\frg,P_0)} \frd_1$ which is a closed submodule of $D(G_0) \otimes_{D(P_0)} W_1$. \qed

\vskip8pt

As in \cite[Cor. 4.3]{OS2} one deduces that 

\vspace{-0.3cm}
\begin{numequation}\label{nonzero}
\mbox{if} \hskip8pt M \neq 0, \hskip8pt \mbox{then} \hskip8pt \cF^G_P(M) \neq 0 \;.
\end{numequation}

\setcounter{enumi}{0}

\begin{para} {\it Extending the functor $\cF^G_P$.} More generally, let $V$ be a $K$-vector space, equipped with a smooth admissible representation of the Levi subgroup $L_P \sub P$, and regard it via inflation as a representation of $P$. We recall that we always consider on smooth representations $V$ the finest locally convex $K$-vector space topology, i.e., the locally convex inductive limit of its finite-dimensional subspaces. 
As such $V$ is of compact type and furnishes a locally analytic $P$-representation.

\vskip8pt

Let $\uM = (M,\tau)$ be an object of $\cO^P$ and write $M$ as a quotient of a generalized Verma module

$$0 \ra \frd \ra U(\frg) \otimes_{U(\frp)} W \ra M \ra 0 \;,$$

\vskip8pt

with a finite-dimensional locally analytic $P$-representation $W \sub M$ which generates $M$ as a $U(\frg)$-modules, as in \ref{basicsequence}. As $W$ is a finite-dimensional space, the injective tensor product $W'\otimes_{K,\iota} V$ coincides with the projective tensor product $W'\otimes_{K,\pi} V$, and hence we simply write $W'\otimes_K V$ for it. It is complete and we have $W'\otimes_K V = \varinjlim_H W'\otimes_K V^H$, as locally convex vector spaces, where $H$ runs through all compact open subgroups of $P$. Equipped with the diagonal action of $P$, $W' \otimes_K V$ is a locally analytic representation. We set

\vspace{-0.3cm}
\begin{numequation}\label{explicit}
\begin{array}{rcl} \cF^G_P(\uM,W,V) & = & \Ind^G_P(W' \otimes_K V)^\frd \\
&&\\
& = & \{f \in \Ind^G_P(W' \otimes_K V) \midc \forall  \frz \in \frd: \langle \frz, f \rangle_{C^{an}(G,V)} = 0 \} \;,
\end{array}
\end{numequation}

where the pairing $\langle \cdot , \cdot \rangle_{C^{an}(G,V)}$ is defined as in \ref{pair_C(G,K)}. We are going to show that $\cF^G_P(\uM,W,V)$ is independent of the chosen $P$-representation $W$, in the sense of \ref{well-dfd} below. Later on we will therefore simplify notation by writing $\cF^G_P(\uM,V)$ instead of $\cF^G_P(\uM,W,V)$.
\end{para}

\vskip8pt

\setcounter{enumi}{0}

\begin{prop}\label{well-dfd} Let $G$, $P$, $M$, and $V$ be as above. Let $W_1 \sub W_2$ be two locally analytic 
finite-dimensional $P$-submodules of $M$ which generate $M$ as a $U(\frg)$-module. Then the canonical map
$\Ind^G_P(W_2' \otimes_K V) \ra \Ind^G_P(W_1' \otimes_K V)$ of  $G$-representations, induced by the map $W_2' \ra W_1'$, restricts to a topological isomorphism 

$$\cF^G_P(\uM,W_2,V) \stackrel{\cong}{\lra} \cF^G_P(\uM,W_1,V) \;.$$

\vskip8pt
\end{prop}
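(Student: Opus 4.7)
The strategy is to construct, from $g\in\cF^G_P(\uM,W_1,V)$, an explicit lift $f\in\cF^G_P(\uM,W_2,V)$ using that $W_1$ generates $M$, and then to check that the restriction $\bar\beta\colon\cF^G_P(\uM,W_2,V)\to\cF^G_P(\uM,W_1,V)$ of $\beta$ is bijective. The topological statement will then follow from the open mapping theorem.

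I first set up the combinatorics of kernels. By the Poincar\'e--Birkhoff--Witt theorem, $U(\frg)\otimes_{U(\frp)}(-)$ is exact, so the inclusion $W_1\hookrightarrow W_2$ yields an injection $\alpha\colon U(\frg)\otimes_{U(\frp)} W_1\hookrightarrow U(\frg)\otimes_{U(\frp)} W_2$ with cokernel $U(\frg)\otimes_{U(\frp)}\bar W$, where $\bar W:=W_2/W_1$. Applying the snake lemma to the common surjection onto $M$ gives
\[
0\to \frd_1\xrightarrow{\alpha}\frd_2\twoheadrightarrow U(\frg)\otimes_{U(\frp)}\bar W\to 0 .
\]
Since $\alpha(\frd_1)\subset\frd_2$, the map $\beta$ sends $\frd_2$-annihilated functions to $\frd_1$-annihilated ones, which defines $\bar\beta$.

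For surjectivity, given $g\in\cF^G_P(\uM,W_1,V)$ and $w_2\in W_2$, pick any relation $w_2=\sum_i u_iw_{1,i}$ in $M$ (which exists since $W_1$ generates $M$) and define
\[
f(g_0)(w_2):=\bigl(\textstyle\sum_i u_i\cdot_r g(\cdot)(w_{1,i})\bigr)(g_0)\quad\text{for all }g_0\in G.
\]
I then verify: (a) the formula is independent of the chosen relation --- two relations differ by an element of $\frd_1$, and $g$ is $\frd_1$-annihilated; (b) $\beta(f)=g$, obtained from the trivial relation $w_1=1\cdot w_1$; (c) local analyticity and the induction property of $f$, where one uses the compatibility $\tau(p)(\frx\cdot m)=\Ad(p)(\frx)\cdot\tau(p)(m)$ from axiom (ii) of $\cO^P$ to commute the right action of the $u_i$ with the right $P$-action on $G$; (d) the $\frd_2$-annihilation of $f$ --- by construction $\langle\frz_{w_2},f\rangle_{C^{an}(G,V)}=0$ for $\frz_{w_2}:=1\otimes w_2-\alpha(\sum_i u_i\otimes w_{1,i})\in\frd_2$, the elements $\{\frz_{w_2}\}_{w_2\in W_2}$ generate $\frd_2$ modulo $\alpha(\frd_1)$ as a $U(\frg)$-module, and $\alpha(\frd_1)$ annihilates $f$ because $\beta(f)=g$ is $\frd_1$-annihilated.

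Injectivity of $\bar\beta$ is direct: if $\bar\beta(f)=0$, so that $f$ takes values in $\bar W'\subset W_2'$, then the identity $\langle\frz_{w_2},f\rangle_{C^{an}(G,V)}=0$ collapses to $f(g_0)(w_2)=0$ for all $g_0\in G$, $w_2\in W_2$, hence $f=0$. For the topology, both $\cF^G_P(\uM,W_i,V)$ are closed subspaces of the strongly admissible locally analytic representations $\Ind^G_P(W_i'\otimes_K V)$, hence are of compact type, and a continuous $G$-equivariant bijection between such spaces is automatically a topological isomorphism by the open mapping theorem. The main technical obstacle is item (c), the verification of $P$-equivariance of the constructed $f$, which rests on carefully moving the $P$-action past the elements $u_i\in U(\frg)$ via the compatibility axiom.
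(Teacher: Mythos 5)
Your proof is correct, but it takes a genuinely different route from the paper's. The paper argues by duality and reduction to trivial coefficients: it fixes a locally analytic section $s\colon G/P \to G$, identifies $\Ind^G_P(W_i'\otimes_K V)$ with $C^{an}_L(\cH)\otimes_K W_i'\otimes_K V$ so that the $\frd_i$-condition only constrains the factor $C^{an}_L(\cH)\otimes_K W_i'$, thereby reducing to the case $V=K$; for trivial $V$ the map $\Ind^G_P(W_2')^{\frd_2}\to\Ind^G_P(W_1')^{\frd_1}$ is a topological isomorphism because, by \ref{basiciso}, both duals are canonically $D(G)\otimes_{D(\frg,P)}M$, compatibly with the transition map. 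You instead build the inverse of $\bar\beta$ explicitly on the level of functions, using that $W_1$ generates $M$ and that $\frd_1$-annihilation makes the formula independent of the chosen relation; this works: the $P$-equivariance of the lift follows from axiom (ii) of $\cO^P$ together with relation-independence applied to the relation $\tau(p)w_2=\sum_i \Ad(p)(u_i)\cdot\tau(p)(w_{1,i})$ (note $\tau(p)w_{1,i}\in W_1$ since $W_1$ is $P$-stable, and $\delta_p u_i \delta_{p^{-1}}=\Ad(p)(u_i)$ in $D(G)$), linearity of $f(g_0)$ in $w_2$ also follows from relation-independence, and your generation claim for $\frd_2$ (snake lemma, exactness of $U(\frg)\otimes_{U(\frp)}(-)$ by PBW, plus the elements $\frz_{w_2}$) suffices because the annihilator of $f$ under the pairing \ref{pair_C(G,K)} is a left $U(\frg)$-submodule of $U(\frg)\otimes_{U(\frp)}W_2$. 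What your approach buys is an explicit inverse that avoids both the choice of section $s$ and the coadmissibility/duality machinery of \ref{basiciso}; what the paper's approach buys is that all verifications (equivariance, analyticity, $\frd_2$-annihilation) are absorbed into statements already proved on the dual side, and the compatibility with general $V$ comes for free from naturality of the tensor decomposition. One small correction: for a general smooth admissible $V$ the representation $\Ind^G_P(W_i'\otimes_K V)$ is only admissible, not strongly admissible (cf. \ref{general admissibility}, which requires $V$ of finite length for the latter); this does not harm your final step, since these induced representations are still of compact type (countable inductive limits over the $V^H$ of compact type spaces with injective transition maps), so closed subrepresentations are of compact type and the open mapping theorem applies to the continuous bijection $\bar\beta$ as you say.
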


\Pf  For $i = 1,2$, let $0 \ra \frd_i \ra U(\frg)\otimes_{U(\frp)} W_i \ra M \ra 0$ be as in \ref{basicsequence}. 
It is immediate that under the (injective) map $U(\frg)\otimes_{U(\frp)} W_1 \ra U(\frg)\otimes_{U(\frp)} W_2$ induced by the inclusion $W_1 \sub W_2$ the submodule $\frd_1$ is mapped to $\frd_2$. It follows that the canonical map $\Ind^G_P(W_2' \otimes_K V) \ra \Ind^G_P(W_1' \otimes_K V)$ maps $\cF^G_P(\uM,W_2,V)$ into $\cF^G_P(\uM,W_1,V)$.

\vskip8pt

We fix a locally $L$-analytic section $s: G/P \ra G$ of the projection $G \ra G/P$ and let $\cH = s(G/P) \sub G$ be its image, so that we have an isomorphism of locally $L$-analytic manifolds $G \cong \cH \times P$ (this isomorphism depends on $s$). Using this isomorphism, we can identify the underlying topological vector space of $\Ind^G_P(W_i' \otimes_K V)$ with

$$C^{an}_L(\cH) \hat{\otimes}_K (W_i' \otimes_K V) = C^{an}_L(\cH) \otimes_K W_i' \otimes_K V \;,$$

\vskip8pt

 the completed tensor product on the left being the projective tensor product, cf. \cite[formula (56) and Remark 5.4]{K2}. This completed tensor product is equal to the ordinary tensor product, as $W' \otimes_K V$ is an inductive limit of finite-dimensional vector spaces. Using this isomorphism, which is natural in $W_i'$ and $V$, we
identify $f \in \Ind^G_P(W_i' \otimes_K V)$ with $\psi_f = \sum_k \psi_k \otimes \phi_k \otimes v_k \in C^{an}_L(\cH) \otimes_K W_i' \otimes_K V$, where $\psi_k \in C^{an}_L(\cH)$, $\phi_k \in W_i'$, $v_k \in V$. Then, for $f$ to be annihilated by $\frd_i$ translates into a condition on $\psi_f$ which is only a condition 
on $\sum_k \psi_k \otimes \phi_k$. With these identifications we find an isomorphism of $K$-vector spaces, natural in $W_i'$ and $V$,

$$\Ind^G_P(W_i' \otimes_K V)^{\frd_i} \stackrel{\cong}{\lra} \Big(C^{an}_L(\cH) \otimes_K W_i'\Big)^{\frd_i} \otimes_K V \;.$$

\vskip8pt

We get therefore a commutative diagram

$$\begin{array}{ccccc}
\Ind^G_P(W_2' \otimes_K V)^{\frd_2} & \stackrel{\cong}{\lra} & \Big(C^{an}_L(\cH) \otimes_K W_2'\Big)^{\frd_2} \otimes_K V & \stackrel{\cong}{\longleftarrow} & \Ind^G_P(W_2')^{\frd_2} \otimes_K V\\
\downarrow & & & & \downarrow\\
\Ind^G_P(W_1' \otimes_K V)^{\frd_1} & \stackrel{\cong}{\lra} & \Big(C^{an}_L(\cH) \otimes_K W_1'\Big)^{\frd_1} \otimes_K V  & \stackrel{\cong}{\longleftarrow} & \Ind^G_P(W_1')^{\frd_1} \otimes_K V\\
\end{array}$$

\vskip8pt

The vertical map on the right hand side is an isomorphism because the map $\Ind^G_P(W_2')^{\frd_2} \ra 
\Ind^G_P(W_1')^{\frd_1}$ is an isomorphism of topological vector spaces, by \ref{basiciso}. The map on the left must hence be an isomorphism too. \qed

\vskip8pt

Let $\uM \in \cO^P$ and $V$ be as above. Given two finite-dimensional locally analytic $P$-submodules $W_1, W_2 \sub M$, which both generate $M$ as a $U(\frg)$-module, we let $W_3 \sub M$ be another locally analytic finite-dimensional $P$-submodule containing both $W_1$ and $W_2$ (e.g., $W_3 = W_1 + W_2$). By \ref{well-dfd} the canonical maps

\vspace{-0.3cm}
\begin{numequation}\label{canonical-isos}
\cF^G_P(\uM,W_2,V) \longleftarrow \cF^G_P(\uM,W_3,V) \lra \cF^G_P(\uM,W_1,V)
\end{numequation}

are both isomorphisms of locally analytic $G$-representations. We have therefore a canonical isomorphism $\cF^G_P(\uM,W_2,V) \lra \cF^G_P(\uM,W_1,V)$ which does not depend on the choice of $W_3$. This is the unique isomorphism which is obtained from choosing any $W_3$ as above and inverting the map on the left of the resulting diagram \ref{canonical-isos}. Via these uniquely specified isomorphisms we can henceforth identify all representations $\cF^G_P(\uM,W,V)$ and write $\cF^G_P(\uM,V)$ for any one of them.

\vskip8pt

\begin{prop}\label{functor} Sending $(\uM,V)$ to $\cF^G_P(\uM,V)$ defines a bi-functor 

$$\cO^P \times \Rep^{\infty,{\rm adm}}_K(L_P) \longrightarrow  \Rep_K^{\rm loc. an.}(G) \;,$$

\vskip8pt

which is contravariant in $M$ and covariant in $V$. Here $\Rep^{\infty,{\rm adm}}_K(L_P)$ is the category of smooth admissible representations of the Levi subgroup $L_P \sub P$  on $K$-vector spaces.
\end{prop}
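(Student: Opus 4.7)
The plan is to verify the two functorialities separately using the explicit description
\[
\cF^G_P(\uM, W, V) = \Ind^G_P(W' \otimes_K V)^{\frd}
\]
from \ref{explicit}, coupled with the well-definedness result \ref{well-dfd}. For the covariant functoriality in $V$ with $\uM$ fixed, I would fix any generating $W \sub M$ as in \ref{Lemma_gen_W}. A morphism $\vphi: V_1 \ra V_2$ of admissible smooth $L_P$-representations is continuous (for the inductive limit topologies) and $P$-equivariant, so $\id_{W'} \otimes \vphi$ is a continuous $P$-equivariant map of locally analytic $P$-representations $W' \otimes_K V_1 \ra W' \otimes_K V_2$. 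This induces a continuous $G$-equivariant map $\Ind^G_P(W' \otimes V_1) \ra \Ind^G_P(W' \otimes V_2)$, and since the pairing $\langle \cdot, \cdot \rangle_{C^{an}(G,V)}$ in \ref{pair_C(G,K)} is manifestly natural in $V$, the $\frd$-vanishing condition is preserved; this yields the required map $\cF^G_P(\uM, V_1) \ra \cF^G_P(\uM, V_2)$.

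For the contravariant functoriality in $\uM$ with $V$ fixed, given a morphism $f: \uM_1 \ra \uM_2$ in $\cO^P$, I would choose a finite-dimensional locally analytic $P$-submodule $W_1 \sub M_1$ generating $M_1$, and then a finite-dimensional locally analytic $P$-submodule $W_2 \sub M_2$ containing $f(W_1)$ and generating $M_2$ (obtained by adding $f(W_1)$ to any generating module supplied by \ref{Lemma_gen_W}, and noting any finite-dimensional $P$-submodule of $M_2$ is automatically locally analytic by definition of $\cO^P$). The lifted $U(\frg)$-module map $\tilde{f}: U(\frg) \otimes_{U(\frp)} W_1 \ra U(\frg) \otimes_{U(\frp)} W_2$ fits into a commutative diagram with the surjections onto $M_1$, $M_2$, and hence sends $\frd_1$ into $\frd_2$. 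Dualizing gives a continuous $P$-equivariant map $W_2' \otimes V \ra W_1' \otimes V$, and the induced map $\Ind^G_P(W_2' \otimes V) \ra \Ind^G_P(W_1' \otimes V)$ then carries $\frd_2$-annihilated sections into $\frd_1$-annihilated sections, by the definition \ref{basicdfn} and the inclusion $\tilde{f}(\frd_1) \sub \frd_2$. This defines a map $\cF^G_P(f, V): \cF^G_P(\uM_2, V) \ra \cF^G_P(\uM_1, V)$.

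It remains to check that this definition is independent of the auxiliary choices of $W_1, W_2$, and that the target lies in $\Rep_K^{\rm loc. an.}(G)$. Independence is handled by \ref{well-dfd}: given two choices for $W_i$, one compares the resulting maps by passing through a common enlargement (for instance $W_i + W_i'$) and invoking the canonical isomorphisms \ref{canonical-isos}; the composition and identity axioms follow by the analogous properties on the level of the induced representations. For the target, since a smooth admissible $V$ is of compact type, $W' \otimes_K V$ is a locally analytic $P$-representation of compact type, whence $\Ind^G_P(W' \otimes V)$ is a locally analytic $G$-representation on a barreled Hausdorff space, and $\cF^G_P(\uM, V)$ inherits these properties as a closed $G$-subrepresentation. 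The main bookkeeping obstacle is the independence check in the functoriality of $\uM$; this is cleanest via the natural isomorphism $\cF^G_P(\uM, W, V) \cong \bigl(C^{an}_L(\cH) \otimes_K W'\bigr)^{\frd} \otimes_K V$ established in the proof of \ref{well-dfd}, which separates the $W$- and $V$-dependences and makes the required naturality transparent.
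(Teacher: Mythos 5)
Your proposal is correct and follows essentially the same route as the paper: choose a generating locally analytic $P$-submodule $W_M \sub M$ and then one $W_N \sub N$ containing its image, observe that the lifted map on generalized Verma modules carries $\frd_M$ into $\frd_N$, and dualize to get the map on the spaces $\Ind^G_P(\,\cdot\,)^\frd$, with independence of choices supplied by \ref{well-dfd} and \ref{canonical-isos}. The only difference is that you treat the two variables separately and spell out the well-definedness and target-category checks, which the paper handles in the discussion preceding the proposition.
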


 \Pf Let $\alpha: \uM \ra \uN$ be a morphism in $\cO^P$, and let $\beta: U \ra V$ be a morphism in 
$\Rep^{\infty,{\rm adm}}_K(L_P)$. Choose a finite-dimensional locally analytic $P$-submodule $W_M \sub M$ which 
generates $M$ as a $U(\frg)$-module. Then choose a finite-dimensional locally analytic $P$-submodule $W_N \sub N$ 
which generates $N$ as a $U(\frg)$-module and which contains $\alpha(W_M)$. With the notation as in \ref{basicsequence} we get a commutative diagram

$$\begin{array}{ccccccccc}
0 & \ra & \frd_M & \ra & U(\frg)\otimes_{U(\frp)} W_M & \ra & M & \ra & 0\\
& & & & \downarrow & & \downarrow & & \\
0 & \ra & \frd_N & \ra & U(\frg)\otimes_{U(\frp)} W_N & \ra & N & \ra & 0
\end{array} $$

\vskip8pt

It follows that the map in the middle maps $\frd_M$ into $\frd_N$. This shows that the map 

$$\Ind^G_P(W_N' \otimes_K U) \lra \Ind^G_P(W_M' \otimes_K V)$$

\vskip8pt

induced by $\alpha' \otimes \beta:  W_N' \otimes_K U \lra W_M' \otimes_K V$ maps $\cF^G_P(\uN,U)$ into $\cF^G_P(\uM,V)$.
\qed

\vskip8pt

\begin{prop}\label{general admissibility}
(i) For all $\uM \in \cO^P$, and for all smooth admissible $L_P$-representations $V$ the $G$-representation $\cF^G_P(\uM,V)$ is admissible.\vskip5pt

(ii) If $V$ is of finite length, then $\cF^G_P(\uM,V)$ is strongly admissible for all $\uM \in \cO^P$. 
\end{prop}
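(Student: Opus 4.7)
The plan is to deduce both parts from the fact that $\cF^G_P(\uM,V)$ is realized as a closed subrepresentation of the parabolically induced representation $\Ind^G_P(W'\otimes_K V)$ by its very definition \ref{explicit}. Since both admissibility and strong admissibility are inherited by closed subrepresentations, and since both notions may be checked after restriction to the compact open subgroup $G_0$ (using $G = G_0 \cdot P$, so that $\Ind^G_P(W'\otimes_K V)|_{G_0} = \Ind^{G_0}_{P_0}(W'\otimes_K V)$), it suffices to establish the corresponding property for $\Ind^{G_0}_{P_0}(W'\otimes_K V)$.

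For part (ii), I would argue by induction on the length of $V$ using the stability of strong admissibility under short exact sequences, thereby reducing to the case that $V$ is irreducible. An irreducible smooth admissible $L_P$-representation is cyclic, so there exists a compact open subgroup $J \subset L_P \cap P_0$ such that the finite-dimensional subspace $V_0 := V^J$ generates $V$ as an $L_P$-module. Set $P_0' = J \cdot U_{P,0}$, a compact open subgroup of $P_0$. Proposition 3.2 applied to the finite-dimensional locally analytic $P_0'$-representation $W' \otimes_K V_0$ identifies the dual of $\Ind^{G_0}_{P_0'}(W' \otimes_K V_0)$ with the finitely presented $D(G_0)$-module $D(G_0) \otimes_{D(P_0')}(W \otimes_K V_0')$, so this induced representation is strongly admissible. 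A descent from $P_0'$ to $P_0$, combined with the fact that $V$ is finitely generated over $L_P$ by the $L_P$-translates of $V_0$, then transfers strong admissibility to $\Ind^{G_0}_{P_0}(W' \otimes_K V)$.

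For part (i), given a general smooth admissible $V$ one writes $V = \bigcup_J V^J$, a directed union over compact open subgroups $J \subset L_P$ with each $V^J$ finite-dimensional by admissibility. Correspondingly, $W' \otimes_K V$ is an increasing union of finite-dimensional locally analytic $P_0'$-subrepresentations of the form $W' \otimes_K V^J$. On the dual side, the argument of part (ii) produces at each such level a finitely generated $D(G_0)$-module. Passing to the projective limit along the Fr\'echet--Stein presentation $D(G_0) = \varprojlim_r D_r(G_0)$ and checking compatibility with shrinking $J$ yields a coadmissible $D(G_0)$-module, which by the Schneider--Teitelbaum anti-equivalence \cite{ST2} is equivalent to the admissibility of $\Ind^{G_0}_{P_0}(W' \otimes_K V)$.

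The main technical obstacle lies in part (i): one must verify that for each $r$ in the Fr\'echet--Stein filtration the $D_r(G_0)$-part of the dual is already captured by some sufficiently small $V^J$, so that the directed union on the representation side gives rise to a system that stabilizes at each affinoid level and hence produces a coadmissible (not merely Fr\'echet) $D(G_0)$-module. This requires arguments in the same spirit as those in the proof of \ref{exact}, resting on the Bruhat decomposition \ref{Bruhat} and the accompanying decomposition of $D(G_0)$ over $D(P_0)$.
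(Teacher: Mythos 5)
Your overall reduction is fine: it suffices to treat the full induced representation, admissibility and strong admissibility pass to closed subrepresentations, and by the Iwasawa decomposition one may work with $\Ind^{G_0}_{P_0}(W'\otimes_K V)$; this is also the shape of the argument the paper invokes by citing \cite[4.8]{OS2}. The genuine gap is the decisive step of part (ii), the ``descent from $P_0'$ to $P_0$''. Strong admissibility is a statement about the $D(G_0)$-module structure of the dual, and the dual of $\Ind^{G_0}_{P_0}(W'\otimes_K V)$ is (a completed version of) $D(G_0)\otimes_{D(P_0)}(W\otimes_K V'_b)$; so what is needed is finite generation of $V'_b$ over $D(L_{P,0})$, where $L_{P,0}=L_P\cap G_0$. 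The fact that $V_0=V^J$ generates $V$ over $L_P$ cannot be exploited here: the Dirac distributions $\delta_\ell$ of the elements $\ell\in L_P$ that produce $V$ from $V_0$ do not lie in $D(G_0)$ or $D(P_0)$, and as a representation of the compact group $L_{P,0}$ the space $V$ is not finitely generated at all (a finitely generated smooth representation of a compact group is finite-dimensional). Correspondingly, there is no natural map in either direction exhibiting $\Ind^{G_0}_{P_0}(W'\otimes_K V)$ as a closed subrepresentation of finitely many copies of $\Ind^{G_0}_{P_0'}(W'\otimes_K V_0)$: the finite induction from $P_0'$ to $P_0$ only reaches the (finite-dimensional) $L_{P,0}$-span of $V_0$, not $V$. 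The substantive input that makes (ii) true is that a smooth admissible representation of finite length is strongly admissible as a smooth representation, i.e.\ $V'_b$ is finitely generated over $D(L_{P,0})$ (equivalently, $V$ embeds $L_{P,0}$-equivariantly into $C^{sm}(L_{P,0})^m$ for some $m$); this is a nontrivial theorem about smooth representations of reductive groups and is not a formal consequence of cyclicity over $L_P$.

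For part (i) your outline is the right one, but the point you yourself flag as ``the main technical obstacle'' is exactly the missing ingredient, and it is not something to be verified by hand via the Bruhat decomposition: the stabilization at each level $r$ is the statement that $V'_b$ is a coadmissible $D(L_{P,0})$-module whose level-$r$ component is the finite-dimensional dual of a space of invariants $V^{J(r)}$ — i.e.\ the theorem of Schneider and Teitelbaum that admissible smooth representations are admissible locally analytic representations (cf.\ \cite{ST2}). Granting this, one gets $D_r(G_0)\otimes_{D(G_0)}\bigl(\Ind^{G_0}_{P_0}(W'\otimes_K V)\bigr)' \cong D_r(G_0)\otimes_{D_r(P_0)}\bigl(W\otimes_K (V^{J(r)})'\bigr)$, which is finitely generated over $D_r(G_0)$, and coadmissibility follows. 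As written, both halves of your argument stop just short of the two facts about smooth representations (coadmissibility of $V'_b$ in general, finite generation of $V'_b$ when $V$ has finite length) that actually carry the proof in \cite[4.8]{OS2}.
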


\Pf This can be proved as in \cite[4.8]{OS2}. \qed

\vskip8pt

\begin{prop}\label{exact_in_both} a) The bi-functor $\cF^G_P$ is exact in both arguments.

\vskip8pt

b) If $Q \supset P$ is a parabolic subgroup, $\frq = \Lie(Q)$, and $\uM$ an object of $\cO^Q$, then

$$\cF^G_P(\uM,V) = \cF^G_Q(\uM,i^{L_Q}_{L_P(L_Q \cap U_P)}(V)) \;,$$

\vskip8pt

where $i^{L_Q}_{L_P(L_Q \cap U_P)}(V)=i^Q_P(V)$  denotes the corresponding induced representation in the category of smooth representations.
\end{prop}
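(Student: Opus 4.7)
The plan is to deduce both statements by combining Proposition \ref{exact} with the natural identifications from Proposition \ref{well-dfd}, and, for part (b), to transfer the argument of \cite[4.9]{OS2}. I will sketch the two parts separately.

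For part (a), the key is to exploit the chain of isomorphisms appearing in the proof of Proposition \ref{well-dfd}: for any finite-dimensional locally analytic $P$-subrepresentation $W \subset M$ generating $M$ as a $U(\frg)$-module, one obtains a natural isomorphism
$$\cF^G_P(\uM,V) \;\cong\; \bigl(C^{an}_L(\cH) \otimes_K W'\bigr)^{\frd} \otimes_K V \;\cong\; \cF^G_P(\uM) \otimes_K V,$$
functorial in both arguments. Exactness in the first argument then follows from Proposition \ref{exact} together with the fact that $(-) \otimes_K V$ is exact on $K$-vector spaces; since $V$ is the locally convex inductive limit of its finite-dimensional subspaces, this identification is compatible with the natural topologies, so the resulting maps are strict morphisms in $\Rep^{\rm loc. an.}_K(G)$. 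Exactness in the second argument is immediate from the same identification: tensoring the fixed space $\cF^G_P(\uM)$ with a short exact sequence $0 \to V_1 \to V_2 \to V_3 \to 0$ in $\Rep^{\infty,{\rm adm}}_K(L_P)$ preserves exactness, and strictness is inherited from the $V_i$.

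For part (b), I would reproduce the argument of \cite[4.9]{OS2} with only notational changes. Its essential ingredients are: (i) induction by stages, i.e., $\Ind^G_P(-) \cong \Ind^G_Q\bigl(\Ind^Q_P(-)\bigr)$ for locally analytic induction; (ii) the fact that, since $\uM \in \cO^Q$, one may choose a finite-dimensional locally analytic $Q$-subrepresentation $W \subset M$ generating $M$ as a $U(\frg)$-module, so that the kernel $\frd \subset U(\frg) \otimes_{U(\frp)} W$ is identified, under the canonical surjection $U(\frg) \otimes_{U(\frp)} W \twoheadrightarrow U(\frg) \otimes_{U(\frq)} W$, with the analogous kernel defining $\cF^G_Q$; and (iii) the $Q$-equivariant isomorphism $\Ind^Q_P(W' \otimes_K V) \cong W' \otimes_K \Ind^Q_P(V)$, given by $w \otimes f \mapsto \bigl(q \mapsto q^{-1}w \otimes f(q)\bigr)$, valid because $W'$ extends to a $Q$-representation, combined with $\Ind^Q_P(V) = i^Q_P(V) = i^{L_Q}_{L_P(L_Q\cap U_P)}(V)$, using that $V$ is smooth and $U_Q$ acts trivially on it. None of these steps uses that $M \in \cO^\frp_\alg$, so the argument carries over unchanged.

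The main obstacle, and really the only nontrivial point, is the careful topological bookkeeping in part (a): one must verify that exactness holds in $\Rep^{\rm loc. an.}_K(G)$ and not merely at the level of abstract $K$-vector spaces. This is already encoded in the topological nature of the isomorphisms established in Proposition \ref{well-dfd}, so no new ingredient beyond those appearing earlier in the section should be required.
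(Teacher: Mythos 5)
Your part (a) is correct and is essentially the route the paper intends (the paper itself only cites \cite[4.9]{OS2}): the proof of Proposition \ref{well-dfd} already supplies the natural topological (though not $G$-equivariant) identification $\cF^G_P(\uM,W,V)\cong \Ind^G_P(W')^{\frd}\otimes_K V$, compatible with the maps induced by morphisms in $\cO^P$ and in $\Rep^{\infty,{\rm adm}}_K(L_P)$ once the generating subspaces $W$ are chosen compatibly as in \ref{functor}; combining this with Proposition \ref{exact} and exactness of $-\otimes_K V$ gives exactness in both variables, and strictness is automatic because all objects are admissible by \ref{general admissibility}.

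Part (b), however, has a genuine gap at its key point. The identity $\Ind^Q_P(V)=i^Q_P(V)$ is false whenever $Q\supsetneq P$: the locally analytic induction of a smooth representation is not smooth. For the trivial representation, $\Ind^Q_P(K)$ is the space $C^{an}(Q/P)$ of all locally analytic functions on the positive-dimensional manifold $Q/P$, which strictly contains the locally constant functions $i^Q_P(K)=C^{\infty}(Q/P)$; smoothness of $V$ and triviality of the $U_Q$-action do not change this. So your chain $\Ind^G_P(W'\otimes_K V)\cong\Ind^G_Q\bigl(\Ind^Q_P(W'\otimes_K V)\bigr)\cong\Ind^G_Q\bigl(W'\otimes_K\Ind^Q_P(V)\bigr)$ followed by the replacement of $\Ind^Q_P(V)$ by $i^Q_P(V)$ would compare $\cF^G_P(\uM,V)$ with a subspace of a strictly larger induced representation than the one in the statement. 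Relatedly, your ingredient (ii) misstates the relation between the kernels: choosing $W$ to be a $Q$-stable generating subspace, $\frd_P=\ker\bigl(U(\frg)\otimes_{U(\frp)}W\to M\bigr)$ is the full preimage of $\frd_Q=\ker\bigl(U(\frg)\otimes_{U(\frq)}W\to M\bigr)$ under the surjection $U(\frg)\otimes_{U(\frp)}W\twoheadrightarrow U(\frg)\otimes_{U(\frq)}W$, not something identified with it; the difference, namely the kernel of this surjection, is generated by the elements $\frx\otimes w-1\otimes(\frx\cdot w)$ with $\frx\in\frq$, $w\in W$, and it is exactly the annihilation conditions coming from these elements that force, in the inner step of induction by stages, the $Q$-variable of a $\frd_P$-annihilated function to land in $W'\otimes_K i^Q_P(V)$. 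This is the heart of the proof of the $PQ$-formula in \cite[4.9]{OS2}, and it is where the hypothesis $\uM\in\cO^Q$ is actually used (beyond the mere existence of a $Q$-stable $W$, which Lemma \ref{Lemma_gen_W} provides). With this mechanism restored, your conclusion that nothing in the argument requires $M\in\cO^\frp_\alg$ is correct and matches the paper; but as written, step (iii) is where your proof breaks.
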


\Pf The proof of \cite[4.9]{OS2} applies here too. \qed

\section{Irreducibility results}\label{irredresults}

In this section we prove analogues of the irreducibility results of \cite{OS2}, following the proofs given there very closely. In order to keep the exposition as self-contained as possible, we repeat many of the arguments. The key adjustment to the present setting (where we do not assume that the weights of $\frt$ on $M$ are integral) occurs in step four of the proof of theorem \ref{U_r_modules}. It is there where we use a result about integrality properties of highest weight modules, cf. section \ref{HWM}. 

\vskip8pt

The results in this section are valid under the following assumption on the residue characteristic $p$ of $L$: \vskip8pt

\begin{assumption}\label{hyp} If the root system $\Phi = \Phi(\frg,\frt)$ has irreducible components of type $B$, $C$ or $F_4$, we assume $p > 2$, and if $\Phi$ has irreducible components of type $G_2$, we assume that $p > 3$.
\end{assumption}

In section \ref{HWM} we prove certain results on highest weight modules under the same assumption, and these results are used in this section, which is why this restriction is imposed here. It might be possible that a more refined analysis in section \ref{HWM} shows that one can obtain results which are actually sufficient for our purposes here, without assuming \ref{hyp}. \vskip8pt

\begin{dfn}\label{maximal} Let $M$ be an object of the category $\cO$. We call a standard parabolic subalgebra $\frp$ {\it maximal for $M$} if $M \in \cO^\frp$ and if $M \notin \cO^\frq$ for any parabolic subalgebra $\frq$ strictly containing $\frp$.
\end{dfn}

It follows from \cite[sec. 9.4]{H2} that for every object $M$ of $\cO$ there is unique standard parabolic subalgebra $\frp$ which is maximal for $M$.
In this section, in order to clearly distinguish elements of the category $\cO^P$ from elements of $\cO$, we will write $\uM = (M,\tau)$ for objects of $\cO^P$.
 
\vskip8pt

\begin{thm}\label{irredG_0} Suppose \ref{hyp} holds. Let $\uM = (M,\tau: P \ra \End_K(M)^*)$ be an object of $\cO^P$ with the following two properties \vskip8pt

(a) $M$ is a simple object of $\cO$;

\vskip5pt

(b) $\frp$ is maximal for $M$. 

\vskip8pt

Then

\vskip8pt

(i) $D(G_0) \otimes_{D(\frg,P_0)}M$ is simple as a $D(G_0)$-module.

\vskip5pt

(ii) $\cF^G_P(\uM) = \Ind^G_P(W')^\frd$ is topologically irreducible as a $G_0$-representation.

\vskip5pt

(iii) $\cF^G_P(\uM) = \Ind^G_P(W')^\frd$ is topologically irreducible as a $G$-representation.
\end{thm}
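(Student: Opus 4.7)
The overall plan is to mimic the proof of \cite[5.3]{OS2} line by line, with the one modification that the integrality inputs from \cite[8.10, 8.13]{OS2} are replaced by the refined results of section \ref{HWM}. First, the three statements are logically linked: (iii) is immediate from (ii), since any closed $G$-invariant subspace is a fortiori $G_0$-invariant. The equivalence of (i) and (ii) is formal: by the Schneider--Teitelbaum anti-equivalence between admissible locally analytic $G_0$-representations and coadmissible $D(G_0)$-modules \cite[6.3]{ST2}, combined with the identification $\cF^G_P(\uM)' \cong D(G_0) \otimes_{D(\frg,P_0)} M$ from \ref{basiciso}(iii), topological irreducibility of the $G_0$-representation corresponds to simplicity of the coadmissible module. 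So the whole task reduces to proving (i).

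To attack (i), I would pick a finite-dimensional locally analytic $P$-subrepresentation $W \subset M$ generating $M$ over $U(\frg)$ (possible by \ref{Lemma_gen_W}), and write $M = (U(\frg) \otimes_{U(\frp)} W)/\frd$, so that
\[
D(G_0) \otimes_{D(\frg,P_0)} M \;\cong\; \bigl(D(G_0) \otimes_{D(P_0)} W\bigr)/D(G_0)\frd.
\]
The Bruhat decomposition $G_0 = \coprod_{w \in W/W_P} IwP_0$ together with the Iwahori factorization $I = U_w^- \cdot P_{0,w}$ used in the proof of \ref{exact} gives
\[
D(G_0) \otimes_{D(P_0)} W \;=\; \bigoplus_{w \in W/W_P} \delta_w\, D(U^{-,w}) \otimes_K W,
\]
so in particular the summand indexed by $w = 1$ contains a copy of $M$ itself. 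Given a nonzero closed $D(G_0)$-submodule $N$, I would project onto the longest-Weyl-element component (where the relevant unipotent $U^{-,w}$ is largest) and use the simplicity of $M$ in $\cO$ together with the $U(\frg)$-action to deduce that $N$ already contains the image of $M$.

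The bulk of the work is then to show that a $D(G_0)$-submodule containing (the image of) $M$ must be all of $D(G_0) \otimes_{D(\frg,P_0)} M$. Here I would pass, as in the proof of \ref{exact}, to the Fr\'echet--Stein completions $D_r(G_0)$ for $r \in \sqrt{|K^\times|}$ approaching $1$ from below, and exploit the decomposition
\[
D_r(G_0) \otimes_{D_r(\frg,P_0)} M \;=\; \bigoplus_{w \in W/W_P} \delta_w\, D_r(U^{-,w}) \otimes_{U_r(\fru^{-,w} \cap \frp)} M_{\mathrm{loc}}
\]
(with an appropriate ``local'' slice of $M$), reducing to a question about the free action of $U_r(\fru^{-,w})$ on the Bruhat pieces. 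Concretely, for each simple reflection $s$ with $s \cdot \lambda < \lambda$ forbidden (because $\frp$ is maximal for $M$, by \ref{maximal}), one must exhibit explicit relations in $M$ between a highest weight vector and its translates by elements of $U(\fru^-)$, and verify that the coefficients appearing can be bounded so that the resulting series converge in $D_r(U^{-,w})$ for $r$ close enough to $1$.

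The main obstacle, and the step where this theorem genuinely departs from \cite[5.3]{OS2}, is precisely this last integrality estimate: in \cite{OS2} one had $\lambda \in X^\ast(T)$, so the eigenvalues controlling the relations were $p$-adic integers automatically; here $\lambda$ is only the derivative of a locally analytic character, so integrality is not free and must be extracted by hand. This is exactly what section \ref{HWM} provides under assumption \ref{hyp}; plugging those estimates in where \cite[8.10, 8.13]{OS2} were invoked completes the argument. Once $N$ exhausts $D_r(G_0) \otimes_{D_r(\frg,P_0)} M$ for each such $r$, passing to the projective limit gives $N = D(G_0) \otimes_{D(\frg,P_0)} M$, proving (i). Claims (ii) and (iii) follow as indicated above.
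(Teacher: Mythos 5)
Your formal reductions are exactly those of the paper: (ii)$\Rightarrow$(iii) is immediate, and (i)$\Leftrightarrow$(ii) follows from the Schneider--Teitelbaum anti-equivalence \cite[6.3]{ST2} together with \ref{basiciso}. But in the proof of (i) you have located the difficulty in the wrong place, and the step you do sketch has a gap. The part you call ``the bulk of the work'' --- that a $D(G_0)$-submodule containing the image of $M$ is everything --- is trivial, since $1\otimes M$ generates $D(G_0)\otimes_{D(\frg,P_0)}M$ as a $D(G_0)$-module. The genuinely hard point is the one you dispatch in a sentence: producing a copy of $M$ inside an arbitrary nonzero submodule. Projecting a nonzero submodule $N$ onto a Bruhat component only shows the projection is nonzero; it does not place any element of that component inside $N$ unless you already know that the components are simple and \emph{pairwise non-isomorphic} over the ring acting. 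That is precisely the content of the paper's Theorem \ref{U_r_modules} (from which \ref{irredG_0}(i) is deduced as the case $H=G_0$ of \ref{irredH}): after passing to $D_r(H)$ one has $\bM_r=\bigoplus_{g\in HP_0/P_{0,r}}\delta_g\star\frm_r$ with $\frm_r=U_r(\frg)M$, each summand is a simple $U_r(\frg)$-module, and $\delta_{g_1}\star\frm_r\not\cong\delta_{g_2}\star\frm_r$ whenever $g_1P_{0,r}\neq g_2P_{0,r}$; only then must a nonzero $D_r(H)$-submodule be a sub-sum of components, hence all of $\bM_r$ because the Dirac distributions permute the components transitively, and one concludes via \cite[3.9]{ST2}. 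Note also that this decomposition runs over cosets of $P_{0,r}=H^mP_0$, which is strictly finer than your Bruhat decomposition over $W/W_P$: the hard case is a twist by a unipotent element $u\in U^-_{P,0}\setminus P_{0,r}$, not by a Weyl representative (the Weyl-group step is settled comparatively quickly using Cor.~\ref{injective}).

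The analytic input is also described backwards. In the unipotent case the paper shows that an isomorphism $\frm_r\cong\delta_u\star\frm_r$ would force the image of the highest weight vector to be the series $\delta_{u^{-1}}\cdot v^+=\sum_{n\ge0}\tfrac{(-1)^n}{n!}\log_{U_P^-}(u)^n\cdot v^+$ in the formal completion $\hat M$, and the whole purpose of section \ref{HWM} (Prop.~\ref{both_conditions}, under \ref{hyp}) is to prove that this series does \emph{not} lie in $\frm_r$: the $\|\cdot\|_r$-norms of lifts of its weight components grow like $(ps)^n$, giving the contradiction. You instead ask for coefficient bounds ensuring \emph{convergence} in $D_r(U^{-,w})$, which is the opposite of what is needed; and your phrase about ``each simple reflection $s$ with $s\cdot\lambda<\lambda$ forbidden'' does not reflect how maximality of $\frp$ actually enters (it enters through $I=\{\alpha\in\Delta\mid\langle\lambda,\alpha^\vee\rangle\in\Z_{\ge0}\}$ and the injectivity of $\fru_P^-$ on $M$, Cors.~\ref{locfinitesubalgebra} and \ref{injective}). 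So although your skeleton (the reductions, the passage to $D_r$ and back by the Fr\'echet--Stein property, and the appeal to section \ref{HWM} where \cite{OS2} used integrality of the weights) mirrors the paper, the proposal omits the central simplicity/non-isomorphism theorem for the twisted modules $\delta_g\star\frm_r$ and misstates the direction of the key estimate, so as written it does not amount to a proof.
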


 \Pf By \ref{basiciso} (iv) we know that $D(G_0) \otimes_{D(\frg,P_0)} M$ is a coadmissible $D(G_0)$-module. If it is simple, then, by \cite[6.3]{ST2}, the representation $\cF^G_P(\uM)$ is topologically irreducible as a $G_0$-representation. This shows that (i) implies (ii). Assertion (ii) obviously implies (iii). Finally, assertion (i) is a special case of Theorem \ref{irredH} below if we take $H = G_0$. \qed

\vskip8pt

\begin{para}\label{generalizations} {\it Generalization to open normal subgroups.} It will be useful later (in the proof of \ref{irredgeneral}), to have a similar criterion for the irreducibility of subrepresentations of $\cF^G_P(M)|_H$, 
where $H$ is an arbitrary open {\it normal} subgroup of $G_0$. For each $g \in G_0,$ the subspace

$$\left\{f \in \Ind^{G_0}_{P_0}(W')^\frd \midc \supp(f) \sub H g P_0\right\}$$

\vskip8pt

 of $\Ind_{P_0}^{G_0}(W')^\frd$ is closed and stable under the action of $H$, and we therefore have a decomposition of $H$-representations

$$\Ind_{P_0}^{G_0}(W')^\frd|_H \; = \; \bigoplus_{g \in H\bksl G_0/P_0}
\left\{f \in \Ind^{G_0}_{P_0}(W')^\frd \midc \supp(f) \sub H g P_0\right\} \;.$$

\vskip8pt

We define the representation $\Ind^{HP_0}_{P_0}(W')^\frd$ as in \ref{basicdfn}. Extending functions on $HP_0$ by zero gives an isomorphism of $H$-representations

$$\Ind^{HP_0}_{P_0}(W')^\frd \cong \left\{f \in \Ind^{G_0}_{P_0}(W')^\frd \midc \supp(f) \sub H P_0\right\} \;.$$

\vskip8pt

 For $g \in G_0$, we denote by $\Ind^{HP_0}_{P_0}(W')^{\frd,g}$ the space $\Ind^{HP_0}_{P_0}(W')^\frd$ equipped with the $H$-action defined by

$$(h\cdot_g f)(x) = f(g^{-1}h^{-1}g x) \;.$$


 The map

$$\begin{array}{ccc} \Ind^{HP_0}_{P_0}(W')^{\frd,g} & \lra & \left\{f \in \Ind^{G_0}_{P_0}(W')^\frd \midc \supp(f) \sub H g P_0\right\} \;, \\
& &\\
f & \longmapsto & x \mapsto \left\{\begin{array}{ccl} f(g^{-1}x) & , & x \in H g P_0\\
0 & , & \mbox{else}   \end{array} \right. \end{array}$$

\vskip8pt

 is then an isomorphism of $H$-representations. In the theorem below we use the following notation. 
For a $D(H)$-module $N$ and $g \in G_0,$ we denote by $\delta_g \star N$ the space $N$, equipped with the structure of a $D(H)$-module given by $\delta \cdot_g n = (\delta_{g^{-1}} \delta \delta_g)n$, where $n \in N$, $\delta \in D(H)$, and the product $\delta_{g^{-1}} \delta \delta_g$, computed as an element of $D(G_0)$, is in fact an element of $D(H)$.
\end{para}

\setcounter{enumi}{0}

\begin{thm}\label{irredH} Suppose \ref{hyp} holds. Let $\uM  = (M,\tau) \in \cO^P$ be as in \ref{irredG_0}. Let $H$ be an open normal subgroup of $G_0$, and let $g, g_1, g_2 \in G_0$. Then

\vskip8pt

(i) The dual space of $\Ind^{HP_0}_{P_0}(W')^{\frd,g}$ is isomorphic to $\delta_g \star \left(D(HP_0) \otimes_{D(\frg,P_0)} M\right)$ as $D(H)$-module. In particular, this is a coadmissible $D(H)$-module.

\vskip5pt

(ii) The $D(H)$-module $\delta_g \star \left(D(HP_0) \otimes_{D(\frg,P_0)} M\right)$ is simple.

\vskip5pt

(iii) The $D(H)$-modules $\delta_{g_1} \star \left(D(HP_0) \otimes_{D(\frg,P_0)} M\right)$ and $\delta_{g_2} \star \left(D(HP_0) \otimes_{D(\frg,P_0)} M\right)$ are isomorphic if and only if $g_1 HP_0 = g_2 HP_0$.

\vskip5pt

(iv) The $H$-representation $\Ind^{HP_0}_{P_0}(W')^{\frd,g}$ is topologically irreducible.

\vskip5pt

(v) The topological  $H$-representations $\Ind^{HP_0}_{P_0}(W')^{\frd, g_1}$ and $\Ind^{HP_0}_{P_0}(W')^{\frd, g_2}$ are isomorphic if and only if $g_1 HP_0 = g_2 HP_0$.
\end{thm}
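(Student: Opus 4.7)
The plan is to follow, as closely as possible, the strategy of \cite[Thm.~5.3]{OS2}, with the one substantive change that the integrality inputs \cite[8.10,~8.13]{OS2} are replaced by their generalizations proved in section \ref{HWM}. I will treat the five claims in the order (i), (ii), (iv), (iii), (v), since (iv) and (v) are Schneider--Teitelbaum-dual reformulations of (ii) and (iii), and (ii) is the core step.

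For (i), I would first identify $\Ind^{HP_0}_{P_0}(W')^\frd$, via extension by zero, with the closed $H$-stable subspace of $\Ind^{G_0}_{P_0}(W')^\frd$ consisting of functions supported on $HP_0$. Applying the pairing \ref{pair_K} and Proposition \ref{basiciso} with the compact group $HP_0$ in place of $G_0$, one obtains a $D(H)$-linear isomorphism between the continuous dual of $\Ind^{HP_0}_{P_0}(W')^\frd$ and $D(HP_0)\otimes_{D(\frg,P_0)} M$. A direct check using the definition $(h\cdot_g f)(x)=f(g^{-1}h^{-1}gx)$ shows that $g$-twisting the $H$-action on the representation side corresponds on the dual side to replacing the $D(H)$-action by $\delta\cdot_g n=(\delta_{g^{-1}}\delta\delta_g)\cdot n$, which is precisely the $\delta_g\star(-)$ structure.

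The heart of the matter is (ii). Let $0\neq N\subset \delta_g\star\bigl(D(HP_0)\otimes_{D(\frg,P_0)} M\bigr)$ be a $D(H)$-submodule; one must show $N$ equals the whole module. Since conjugation by $\delta_g$ is a $K$-algebra automorphism of $D(G_0)$ preserving $D(HP_0)$, we may reduce to $g=1$. Following \cite[5.3]{OS2}, I would pick a nonzero element of $N$ and pass to the Arens--Michael completions $D_r(H)$ for $r\in\sqrt{|K^*|}$ sufficiently close to $1$ from below, then use the Bruhat decomposition \ref{Bruhat} and the Iwahori factorization \ref{Iwahori} to write $D_r(H)\otimes_{D(P_0)}W$ as a direct sum of pieces indexed by $W/W_P$. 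The maximality hypothesis on $\frp$ (Definition \ref{maximal}) is used to isolate a ``top'' Bruhat cell whose corresponding summand contains a highest-weight-like vector. Extracting this vector requires inverting certain operators coming from the relations defining $\frd$; it is exactly at this juncture that the integrality assertions of section \ref{HWM} are invoked in place of \cite[8.10,~8.13]{OS2}, since the $\frt$-weights on $M$ are no longer assumed integral. Once one obtains $1\otimes W\subset N$, the fact (Lemma \ref{Lemma_gen_W}) that $W$ generates $M$ as a $U(\frg)$-module, combined with $U(\frg)\subset D(\frg,P_0)\subset D(HP_0)$, forces $N$ to be the whole module. The main obstacle is this step: one has to verify carefully that the weaker integrality information of section \ref{HWM} is still sufficient to push the OS2 argument through.

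Given (i) and (ii), claim (iv) follows from the Schneider--Teitelbaum anti-equivalence \cite[6.3]{ST2}: the dual of $\Ind^{HP_0}_{P_0}(W')^{\frd,g}$ is a simple coadmissible $D(H)$-module, so the representation is topologically irreducible. For (iii), the direction $(\Leftarrow)$ is immediate: if $g_2=g_1 h p_0$ with $h\in H$ and $p_0\in P_0$, then left multiplication by $\delta_{hp_0}$, composed with the internal $P_0$-action on $M$, provides the required $D(H)$-isomorphism (using normality of $H$ in $G_0$). For $(\Rightarrow)$, I would transport the hypothesis across Schneider--Teitelbaum to (v) and use the Mackey-type decomposition
$$\Ind^{G_0}_{P_0}(W')^\frd|_H \;=\; \bigoplus_{g\in H\bksl G_0/P_0}\Ind^{HP_0}_{P_0}(W')^{\frd,g}$$
together with the observation that the summands have pairwise disjoint supports in $G_0/P_0$, so an $H$-equivariant isomorphism between two of them would force an equality of supports, i.e.\ equality of double cosets. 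Assertion (v) is then the Schneider--Teitelbaum translation of (iii).
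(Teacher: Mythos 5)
Your treatment of (i), and the deduction of (iv) and (v) from (ii) and (iii) via \cite[6.3]{ST2}, matches the paper. But the two places where the actual content lies are not supplied. For (ii) you essentially write ``follow \cite[5.3]{OS2} and verify that the integrality results of section \ref{HWM} suffice'' -- that is precisely the step that has to be carried out, and your description of it is not accurate: the integrality statements are not used to ``extract a highest-weight-like vector by inverting operators coming from $\frd$'' in a top Bruhat cell. The paper's route is: reduce to a uniform pro-$p$ subgroup $H_0$ normal in $G_0$ (Step 1); pass to $\bM_r = D_r(HP_0)\otimes_{D(\frg,P_0)}M$ and invoke \cite[3.9]{ST2} (Step 2); then pass to the $U_r(\frg)$-module $\frm_r = U_r(\frg)M$, whose simplicity follows from density of $M$ and \cite[1.3.12]{F} with no integrality input, and decompose $\bM_r = \bigoplus_{g \in HP_0/P_{0,r}} \delta_g \star \frm_r$ using Sublemma \ref{P_{0,r}}. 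Simplicity of $\bM_r$ as a $D_r(H)$-module then rests entirely on Theorem \ref{U_r_modules}: the twists $\delta_g \star \frm_r$ are simple and pairwise non-isomorphic for distinct cosets of $P_{0,r}$. It is in proving that non-isomorphism (via the maximality of $\frp$, Cor. \ref{injective}, the embedding of $\frm_r$ into the formal completion $\hat M$, and the norm estimate that the series $\delta_{u^{-1}}\cdot v^+$ does not lie in $\frm_r$, which is where Prop. \ref{both_conditions} enters) that the hard work and the hypotheses are used. None of this structure appears in your sketch, so the main claim is not proved.

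The implication $(\Rightarrow)$ in (iii)/(v) is moreover argued incorrectly. You propose that an $H$-equivariant isomorphism between two summands of the Mackey decomposition ``would force an equality of supports.'' The support in $G_0/P_0$ is a feature of the chosen realization inside $\Ind^{G_0}_{P_0}(W')^\frd$, not an invariant of the abstract topological $H$-representation, so disjointness of supports gives no obstruction to an abstract isomorphism; your argument would only work if you already knew the summands occur with multiplicity one, i.e.\ exactly what is being proved. Indeed, without the maximality hypothesis on $\frp$ the twisted modules for different double cosets \emph{can} be isomorphic, which shows that no soft support argument can succeed. In the paper this implication is again reduced, through the decompositions of Steps 1--3, to the statement of Theorem \ref{U_r_modules} that $\delta_{g_1}\star\frm_r \not\cong \delta_{g_2}\star\frm_r$ for $g_1P_{0,r} \neq g_2P_{0,r}$, proved by the weight-vector equation $\lambda(x)v = \Ad(u^{-1})(x)\cdot v$ and the divergence estimate mentioned above. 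Your $(\Leftarrow)$ direction (translation by $\delta_{hp_0}$) is fine.
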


\Pf (i) The proof of \ref{basiciso} only makes use of the fact that $G_0$ is open (hence its Lie algebra is equal to $\frg$) and that it contains $P_0$, and the corresponding statements carry over for any group with these properties. This shows statement (i) for the case when $g = 1$. From this the general case follows immediately.

\vskip8pt

(iv) This follows from (ii), by \cite[6.3]{ST2}.

\vskip8pt

(v) This follows from (iii), by \cite[6.3]{ST2}.

\vskip8pt

We will now start with the proofs of (ii) and (iii). The first step is to reduce to the case of a suitable subgroup $H_0 \sub H$ which is normal in $G_0$ and uniform pro-$p$. Having this accomplished, we rename $H_0$ to $H$. The second step is to pass to modules over the Banach algebra $D_r(H)$, cf. \cite[sec. 4]{ST2}. The third step consists in studying the restriction of these modules to the subring $U_r(\frg) = U_r(\frg,H)$, which is the completion of $U(\frg)$ with respect to the norm $\|\cdot\|_r$ on $D_r(H)$.

\vskip8pt

{\it Step 1: reduction to $H_0$.} The standard parabolic subgroup $\bP \sub \bG$ has a smooth model $\bP_0 \sub \bG_0$, and the unipotent radical $\bU_\bP^-$ of the parabolic subgroup opposite to $\bP$ has a smooth model $\bU_{\bP,0} \sub \bG_0$ as well.

\vskip8pt

Let $\Lie(\bG_0)$, $\Lie(\bP_0)$ and $\Lie(\bU_{\bP,0})$ be the Lie algebras of these (smooth) group schemes. These are $O_L$-lattices in $\frg = \Lie(G)$, $\frp = \Lie(P)$ and $\fru_P^- = \Lie(U_P^-)$, respectively. Moreover, $\Lie(\bG_0)$, $\Lie(\bP_0)$ and $\Lie(\bU_{\bP,0})$ are $\Zp$-Lie algebras, and we have

$$\Lie(\bG_0) = \Lie(\bU^-_{\bP,0}) \oplus \Lie(\bP_0) \;.$$

\vskip8pt

For $m_0 \ge 1$ ($m_0 \ge 2$ if $p=2$) the $O_L$-lattices $p^{m_0} \Lie(\bG_0)$, $p^{m_0} \Lie(\bP_0)$ and $p^{m_0} \Lie(\bU_{\bP,0})$ are powerful $\Zp$-Lie algebras, cf. \cite[sec. 9.4]{DDMS}, and hence
$\exp_G: \frg \dashrightarrow G$ converges on these $O_L$-lattices. Therefore,

\vspace{-0.3cm}
\begin{numequation}\label{H_0}
H_0 = \exp_G\Big(p^{m_0} \Lie(\bG_0)\Big) \;, \;\; H_0^+ = \exp_G\Big(p^{m_0} \Lie(\bP_0)\Big) \;, \;\; H_0^- = \exp_G\Big(p^{m_0} \Lie(\bU^-_{\bP,0})\Big)
\end{numequation}

are uniform pro-$p$ groups. The adjoint action of $G_0 = \bG_0(O_L)$ leaves $\Lie(\bG_0)$ invariant, and hence $p^{m_0} \Lie(\bG_0)$. This shows that $H_0$ is normal in $G_0$, and analogous arguments show that $H_0^+$ and $H_0^-$ are normal in $P_0$ and $U_{P,0}^-$, respectively. It follows from the existence of 'coordinates of the second kind' that $H_0 = H_0^-H_0^+$ and $H_0 \cap P_0 = H_0^+$ and $H_0 \cap U^-_{P,0} = H_0^-$, cf. \cite[2.2.4 (ii)]{OS1}.

\vskip8pt

Moreover, for large enough $m_0$ the group $H_0$ is contained in $H$, and 

$$D(H) = \bigoplus_{h \in H/H_0} \delta_h D(H_0) \;.$$

\vskip8pt

This implies that, as $D(H_0)$-modules,

\vspace{-0.3cm}
\begin{numequation}\label{reduction1}
\delta_g \star \left(D(HP_0) \otimes_{D(\frg,P_0)} M\right) = \bigoplus_{h \in HP_0/H_0P_0} \delta_{gh} \star \left(D(H_0P_0) \otimes_{D(\frg,P_0)} M\right) \;.
\end{numequation}

Let us now assume that statements (ii) and (iii) hold for $H_0$. Then the direct sum on the right of \ref{reduction1} consists of simple $D(H_0)$-modules which are pairwise non-isomorphic. As they are permuted by the action of $H$, it follows that the left side is simple as $D(H)$-module.
Furthermore, if $\delta_{g_1} \star \left(D(HP_0) \otimes_{D(\frg,P_0)} M\right) \cong \delta_{g_2} \star \left(D(HP_0) \otimes_{D(\frg,P_0)} M\right)$ as $D(H)$-module, then also as $D(H_0)$-module. But then we must have $g_1 h_1 H_0P_0 = g_2 h_2 H_0 P_0$ for some $h_1, h_2 \in H$, in particular $g_1 H P_0 = g_2 H P_0$.

\vskip8pt

Thus we have shown that statements (ii) and (iii) for $D(H)$ follow from the corresponding statements for $D(H_0)$.

\vskip8pt

To simplify notation, we will from now on assume that $H$, and its subgroups $H^+$ and $H^-$, are defined as in \ref{H_0}, and are, in particular, uniform pro-$p$ groups.

\vskip8pt

{\it Step 2: passage to $D_r(H)$.} We put

$$\kappa = \left\{\begin{array}{lcl} 1 & , & p>2 \\
2 & , & p=2 \end{array} \right. \;.$$

\vskip8pt

(In \cite{OS1}, which we are going to use in the following, $\kappa$ is denoted by $\vep_p$.) Let $r$ always denote a real number in $(0,1) \cap p^\bbQ$ with the property:

\vspace{-0.3cm}
\begin{numequation}\label{r_and_s}
\hskip-20pt \mbox{there is $m \in \Z_{\ge 0}$ such that $s = r^{p^m}$ satisfies $\frac{1}{p} < s$ and $s^{\kappa} < p^{-1/(p-1)}$ .}
\end{numequation}

For the definition of the canonical $p$-valuation on a uniform pro-$p$ group we refer to \cite[2.2.3]{OS1}.
We let $\|\cdot\|_r$ denote the norm on $D(H)$ associated to $r$ and the canonical $p$-valuation, cf. \cite[2.2.6]{OS1} (where this norm is denoted by $\bar{q}_r$). $D_r(H)$ denotes the corresponding Banach space completion. This is a noetherian Banach algebra, and $D(H) = \varprojlim_{r <1} D_r(H)$ (cf. \cite{ST2} for more information). If $\tH \sub G_0$ is a compact open subgroup which contains $H$, then $H$ is normal in $\tH$

\vspace{-0.3cm}
\begin{numequation}\label{dcomp}
D(\tH) = \bigoplus_{g \in \tH/H} \delta_g D(H)
\end{numequation}

and we let $\|\cdot\|_r$ be the maximum norm on $D(\tH)$ given by

\vspace{-0.3cm}
\begin{numequation}\label{maxnorm}
\|\sum_{g \in \tH/H} \delta_g \lambda_g \|_r = \max\{\|\lambda_g \|_r \midc g \in \tH/H \} \;.
\end{numequation}

Then the completion $D_r(\tH)$ of $D(\tH)$ with respect to $\|\cdot\|_r$ has an analogous decomposition:

$$D_r(\tH) = \bigoplus_{g \in \tH/H} \delta_g D_r(H)$$

\vskip8pt

and therefore $D_r(\tH) =  D_r(H) \otimes_{D(H)} D(\tH)$. We are going to use the preceding discussion in the case when $\tH = HP_0$. Put $\bM = D(HP_0) \otimes_{D(\frg,P_0)} M$. To show that $\bM$ is a simple $D(H)$-module it suffices, by \cite[3.9]{ST2}, to show that

$$\bM_r := D_r(H) \otimes_{D(H)} \bM = D_r(HP_0) \otimes_{D(\frg,P_0)} M$$

\vskip8pt

is a simple $D_r(H)$-module for a sequence of $r$'s converging to $1$. By \ref{basiciso} and \ref{nonzero}

$$\cF^{G_0}_{P_0}(M)' = D(G_0) \otimes_{D(HP_0)} \bM$$

\vskip8pt

is non-zero, and thus $\bM = \varprojlim_{r<1} \bM_r$ is non-zero. Hence we must have $\bM_r \neq 0$ for $r$ sufficiently close to $1$. As the image of $M$ in $\bM_r$ generates $\bM_r$ as $D_r(HP_0)$-module, and because $M$ is simple, the canonical map $M \ra \bM_r$ must be injective when $\bM_r \neq 0$. From now on we assume that, in addition to \ref{r_and_s}, $r$ is such that $\bM_r \neq 0$, and we consider $M$ as being contained in $\bM_r$.

\vskip8pt

{\it Step 3: passage to $U_r(\frg)$.} Let $U_r(\frg) = U_r(\frg,H)$ be the topological closure of $U(\frg)$ in $D_r(H)$. It is important for our approach to have a useful description of $U_r(\frg)$. We will freely use the following fact, which follows from \cite[5.6]{Sch}:

\vspace{-0.3cm}
\begin{numequation}\label{density1}
\mbox{$U(\frg)$ is dense in $D_r(H)$ if $r^{\kappa} < p^{-\frac{1}{p-1}}$. In this case $U_r(\frg) = D_r(H)$.}
\end{numequation}

Let $(P_m(H))_{m \ge 1}$ be the lower $p$-series of $H$, cf. \cite[1.15]{DDMS}. Note that $P_1(H) = H$. For $m \ge 0$ put $H^m : = P_{m+1}(H)$ so that $H^0 = H$. We refer to \cite[sec. 4.5, sec. 9.4]{DDMS} for the notion of a $\Zp$-Lie algebra of a uniform pro-$p$ group. It follows from \cite[3.6]{DDMS} that $H^m$ is a uniform pro-$p$ group with $\Zp$-Lie algebra equal to $p^m\Lie_\Zp(H)$. Let $s = r^{p^m}$ be as in \ref{r_and_s}. Denote by $\|\cdot\|_s^{(m)}$ the norm on $D(H^m)$ associated to $s$ and the canonical $p$-valuation on $H^m$. Then, by \cite[6.2, 6.4]{Sch}, the restriction of $\|\cdot\|_r$ on $D(H)$ to $D(H^m)$ is equivalent to $\|\cdot\|_s^{(m)}$, and $D_r(H)$ is a finite and free $D_s(H^m)$-module a basis of which is given by any set of coset representatives for $H/H^m$. By \ref{density1} we can conclude:

\vspace{-0.3cm}
\begin{numequation}\label{density2}
\mbox{if $s = r^{p^m}$ is as in \ref{r_and_s}, then $U(\frg)$ is dense in $D_s(H^m)$, hence $U_r(\frg,H) = D_s(H^m)$.}
\end{numequation}

In particular, $U_r(\frg) \cap H = H^m$ is an open subgroup of $H$. Let

$$\frm_r := U_r(\frg)M$$

\vskip8pt

be the $U_r(\frg)$-submodule of $\bM_r$ generated by $M$. Because we assume $\bM_r \neq 0$ we also have $\frm_r \neq 0$, and $M$ is contained in $\frm_r$.

\vskip8pt

Likewise, we equip $D(H^+)$ ($D(H^-)$, resp.) with the norm $\|\cdot\|_r$ associated to the canonical $p$-valuation on $H^+$ ($H^-$, resp.), and give $D(P_0)$ ($D(U^-_{P,0})$, resp.) the maximum norm as above (cf. \ref{dcomp} and \ref{maxnorm}). Again, we denote by $D_r(P_0)$ ($D_r(U^-_{P,0})$, resp.) the corresponding completions. As is easy to see, these norms on $D(P_0)$ ($D(U^-_{P,0})$, resp.) are equal to the restriction of the norms on $D(HP_0)$ ($D(U^-_{P,0}H)$, resp.) coming from the norm $\|\cdot\|_r$ on $D(H)$ by the recipe explained above.

\vskip8pt

Let $U_r(\frp)$ be the closure of $U(\frp)$ in $D_r(H^+)$. Then $U_r(\frp)$ is the completion of $U(\frp)$ with respect to the norm associated to the canonical $p$-valuation on $H^+$. Because the canonical $p$-valuation of an element of $h \in H$ ($h \in H^+$, resp.) can be read off from $\log_G(h) \in p^{m_0}\Lie(\bG_0)$ ($\log_G(h) \in p^{m_0}\Lie(\bP_0)$, resp.), the canonical $p$-valuation on $H^+$ is the restriction of the canonical $p$-valuation on $H$. Hence $U_r(\frp)$ is also the closure of $U(\frp)$ in $U_r(\frg)$.

\vskip8pt

It follows from \ref{density2} (applied to $H^+$ and $\frp = \Lie_\Zp(H^+)$) that $D_r(P_0)$ is generated as a module over $U_r(\frp)$ by finitely many Dirac distributions $\delta_{g_1}, \ldots, \delta_{g_k}$, with $g_i \in P_0$. Because $P_0$ acts on $M$, the $U_r(\frg)$-module $\frm_r$ is actually a module over the subring

\vspace{-0.3cm}
\begin{numequation}\label{D_r_frg_P_0}
D_r(\frg,P_0) = U_r(\frg)D_r(P_0) = \sum_{i=1}^k U_r(\frg) \cdot \delta_{g_i}
\end{numequation}

generated by $U_r(\frg)$ and $D_r(P_0)$ inside $D_r(G_0)$. Moreover, $\frm_r$ is a finitely generated $U_r(\frg)$-module (in fact, generated by a single vector of highest weight), hence carries a canonical $U_r(\frg)$-module topology, as $U_r(\frg)$ is a noetherian Banach algebra (cf. \cite[1.4.2]{K1} which applies here because we assume $r \in (\frac{1}{p},1) \cap p^\bbQ$).

\vskip8pt

The module $M$ is clearly dense in $\frm_r$ with respect to this topology. It follows from \cite[1.3.12]{F}
or \cite[3.4.8]{OS1} that $\frm_r$ is a simple $U_r(\frg)$-module, and in particular a simple $D_r(\frg,P_0)$-module. We recall the following result \cite[5.6]{OS2}:

\setcounter{enumi}{0}

\begin{sublemma}\label{P_{0,r}} Let $r$ and $s$ be as in \ref{r_and_s}. Put $P_{0,r} = HP_0 \cap D_r(\frg,P_0)$. Then

\vskip8pt

(i) $P_{0,r} = H^mP_0 = H^{-,m}P_0$, where $H^{-,m} = P_m(H^-)$.

\vskip5pt

(ii) $D_r(HP_0) =  \bigoplus_{g \in HP_0/P_{0,r}} \delta_g D_r(\frg,P_0)$. \qed
\end{sublemma}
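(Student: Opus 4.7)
The strategy is to combine the density result \ref{density2}, which gives $U_r(\frg) = D_s(H^m)$, with coordinates of the second kind for the uniform pro-$p$ group $H = H^- H^+$. Applying the factorization $H^m = H^{-,m} \cdot H^{+,m}$ at the level of distribution algebras, and observing that $H^{+,m} \subset H^+ \subset P_0$, one rewrites
$$D_r(\frg, P_0) \; = \; U_r(\frg) \cdot D_r(P_0) \; = \; D_s(H^m) \cdot D_r(P_0) \; = \; D_s(H^{-,m}) \cdot D_r(P_0).$$
This identification is the common backbone of both (i) and (ii).

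For (i), the inclusion $H^m P_0 \subset P_{0,r}$ is immediate from \ref{density2}, and the equality $H^m P_0 = H^{-,m} P_0$ follows from $H^m = H^{-,m} H^{+,m}$ combined with $H^{+,m} \subset P_0$. For the reverse inclusion, I would exploit the unique factorization $HP_0 = H^- \cdot P_0$, which holds because $H \cap P_0 = H^+$ and $H^- \cap H^+ = \{1\}$. So any $g \in HP_0$ can be written uniquely as $g = h^- p$ with $h^- \in H^-$ and $p \in P_0$. Via the Banach-space version of coordinates of the second kind, one has an identification $D_r(HP_0) \cong D_r(H^-) \hat{\otimes}_K D_r(P_0)$ as $K$-Banach spaces, under which $\delta_g$ corresponds to $\delta_{h^-} \otimes \delta_p$. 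The membership $\delta_g \in D_s(H^{-,m}) \cdot D_r(P_0)$ then reduces to $\delta_{h^-} \in D_s(H^{-,m})$, which forces $h^- \in H^{-,m}$ and hence $g \in H^{-,m} P_0$.

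For (ii), unique factorization again gives $HP_0/P_{0,r} = H^- P_0 / H^{-,m} P_0 \cong H^-/H^{-,m}$. On the other hand, the coordinates-of-the-second-kind decomposition of $D_r(H^-)$ over its subalgebra $D_s(H^{-,m})$ (a consequence of the uniform structure of $H^-$, cf.\ \cite{OS1}) yields
$$D_r(H^-) \; = \; \bigoplus_{c \in H^-/H^{-,m}} \delta_c \cdot D_s(H^{-,m}).$$
Multiplying by $D_r(P_0)$ on the right, and invoking $D_r(HP_0) = D_r(H^-) \cdot D_r(P_0)$ from above, one obtains
$$D_r(HP_0) \; = \; \bigoplus_{c \in H^-/H^{-,m}} \delta_c \cdot D_s(H^{-,m})\, D_r(P_0) \; = \; \bigoplus_{c \in HP_0/P_{0,r}} \delta_c \cdot D_r(\frg,P_0),$$
as required. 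The main technical obstacle is rigorously justifying that multiplication induces a topological isomorphism $D_r(H^-) \hat{\otimes}_K D_r(P_0) \cong D_r(HP_0)$ (and the analogous freeness of $D_r(H^-)$ over $D_s(H^{-,m})$), which rests on norm computations for products of Dirac distributions with respect to the canonical $p$-valuation, in the spirit of \cite{OS1}.
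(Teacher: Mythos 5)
This sublemma is not proved in the paper at all: it is explicitly recalled from \cite[5.6]{OS2} (``We recall the following result''), so there is no internal proof to compare with, and your argument is a reconstruction of the omitted one. Your route --- the Banach-space isomorphism $D_r(HP_0)\cong D_r(H^-)\,\hat{\otimes}_K\, D_r(P_0)$ via multiplication, the unique factorization $HP_0=H^-\cdot P_0$ (which indeed needs $H^-\cap P_0=\{1\}$, guaranteed by $H^-\sub U^-_{P,0}$), the freeness $D_r(H^-)=\bigoplus_{c\in H^-/H^{-,m}}\delta_c D_s(H^{-,m})$, and the density statement \ref{density2} --- is exactly the standard technique used in \cite{OS2} for statements of this kind, and it does yield both (i) and (ii). One precision point: your ``backbone'' identity $D_r(\frg,P_0)=D_s(H^{-,m})\cdot D_r(P_0)$ is not literally true if the right-hand side means finite sums of products; elements of $D_s(H^m)$ are convergent series mixing $H^{-,m}$- and $H^{+,m}$-coordinates, and the span of one-sided products is only dense (already $D_s(H^{-,m})\cdot D_s(H^{+,m})\subsetneq D_s(H^m)$, by a rank argument on coefficient ``matrices''). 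What is true, and what your argument actually uses, is that $D_r(\frg,P_0)$ coincides with the image of $D_s(H^{-,m})\,\hat{\otimes}_K\, D_r(P_0)$ under multiplication, i.e.\ with the closure of that span: the span is contained in $D_r(\frg,P_0)$, and $D_r(\frg,P_0)=\sum_i U_r(\frg)\delta_{g_i}$ is a finitely generated module over the noetherian Banach algebra $U_r(\frg)=D_s(H^m)$, hence closed, while conversely $U(\frg)=U(\fru_P^-)U(\frp)$ lies in the span, whose closure therefore contains $U_r(\frg)$ and all $U_r(\frg)\delta_{g_i}$. With the identity read in this completed sense, your reduction of (i) to $\delta_{h^-}\in D_s(H^{-,m})\Leftrightarrow h^-\in H^{-,m}$ and your direct sum decomposition in (ii) go through; the remaining inputs (the topological isomorphism via coordinates of the second kind and the norm computations) are, as you note, available from \cite{OS1} and \cite{Sch}.
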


\vskip8pt

From the second assertion we deduce that $\frm_r$ is equal to $D_r(\frg,P_0) \otimes_{D(\frg,P_0)} M$ as a $D_r(\frg,P_0)$-submodule of $D_r(HP_0) \otimes_{D(\frg,P_0)} M = \bM_r$. Using \ref{P_{0,r}} (ii) again we we can conclude

\vspace{-0.3cm}
\begin{numequation}\label{decomp}
\bM_r = D_r(HP_0) \otimes_{D(\frg,P_0)} M = D_r(HP_0) \otimes_{D_r(\frg,P_0)} \frm_r = \bigoplus_{g \in HP_0/P_{0,r}} \delta_g \frm_r \,.
\end{numequation}

Note that, as $U_r(\frg)$-modules, the submodule $\delta_g \frm_r$ on the right hand side is the same as $\delta_g \star \frm_r$, where we use the notation as introduced before \ref{irredH} (for $U_r(\frg)$ instead of $D(H)$). By Theorem \ref{U_r_modules} below the modules $\delta_g \star \frm_r$ are all simple, and there is no isomorphism of $U_r(\frg)$-modules $\delta_{g_1} \star \frm_r \cong \delta_{g_2} \star \frm_r$ if $g_1P_{0,r} \neq g_2P_{0,r}$. This shows that $\bM_r$ is a simple $D_r(H)$-module.

\vskip8pt

Now assume that $\delta_{g_1} \star \bM$ and $\delta_{g_2} \star \bM$ are isomorphic as $D(H)$-modules. Then, after tensoring with $D_r(H)$ we find that $\delta_{g_1} \star \bM_r$ and $\delta_{g_2} \star \bM_r$ are isomorphic as $D_r(H)$-modules, hence as $U_r(\frg)$-modules. Then \ref{decomp}, together with \ref{U_r_modules}, implies that $g_1h_1 P_{0,r} = g_2 h_2 P_{0,r}$ for some $h_1, h_2 \in H$, and hence $g_1HP_0 = g_2HP_0$.

\vskip8pt

We have therefore reduced statements (ii) and (iii) of \ref{irredH} to the assertions of the theorem below. \qed

\setcounter{enumi}{0}

\begin{thm}\label{U_r_modules} Assume \ref{hyp} holds. Let $\uM = (M,\tau) \in \cO^P$ be as in \ref{irredG_0}. Let $H = \exp_G\left(p^{m_0}\Lie(\bG_0)\right)$ be the uniform pro-$p$ introduced in \ref{H_0} (where it was called $H_0$). Assume that $r \in (\frac{1}{p},1) \cap p^{\bbQ}$ and $s$ are as in \ref{r_and_s}. Assume that

\vspace{-0.3cm}
$$\frm_r = D_r(\frg,P_0) \otimes_{D(\frg,P_0)} M \neq 0 \;.$$


 (This will be the case if $r$ is close enough to 1.) Then, for every $g \in G_0$ the $U_r(\frg)$-module $\delta_g \star \frm_r$ is simple. For any $g_1, g_2 \in G_0$ with $g_1P_{0,r} \neq g_2P_{0,r}$ the $U_r(\frg)$-modules $\delta_{g_1} \star \frm_r$ and $\delta_{g_2} \star \frm_r$ are not isomorphic.
\end{thm}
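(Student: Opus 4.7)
The plan is to follow the proof of the corresponding theorem \cite[5.3]{OS2} essentially step by step; the only place where the argument genuinely breaks and has to be redone is the step that uses $p$-adic integrality of the $\frt$-eigenvalues on $M$. That step is isolated, and is precisely what section \ref{HWM} is designed to handle in the present non-integral setting.

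Concretely, I would proceed as follows. First, by transport of structure via $\Ad(g)$, which preserves the canonical $p$-valuation defining $\|\cdot\|_r$ since $G_0$ normalizes $H$ and acts by Lie-algebra automorphisms preserving $p^{m_0}\Lie(\bG_0)$, the simplicity assertion for $\delta_g \star \frm_r$ reduces to the analogous statement for $\frm_r$ itself (with highest weight and maximal parabolic twisted by $\Ad(g)$, which does not affect what follows). Second, using sublemma \ref{P_{0,r}} together with the PBW-type identification $U(\frg) \otimes_{U(\frp)} W \cong U(\fru_\frp^-) \otimes_K W$ as $K$-vector spaces, where $W \sub M$ is a finite-dimensional locally analytic $P$-subrepresentation generating $M$, I would describe $\frm_r$ as the completion of $M$ for the quotient norm induced by the tensor-product norm $\|\cdot\|_r \otimes \|\cdot\|_W$ on $U_r(\fru_\frp^-) \otimes_K W$, so that $M$ sits as a dense $U(\frg)$-submodule of $\frm_r$. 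Third, I would prove simplicity of $\frm_r$ by the standard strategy: take a nonzero closed $U_r(\frg)$-submodule $N \sub \frm_r$, act on an element $v \in N$ by suitable raising operators in $U(\fru_\frp^+)$ combined with projections onto individual weight components to produce a nonzero element of $M$; simplicity of $M$ as a $U(\frg)$-module (which reflects the hypothesis that $\frp$ is maximal for $M$) then yields $M \sub N$, and density of $M$ in $\frm_r$ together with the fact that finitely generated $U_r(\frg)$-modules carry a canonical topology in which all submodules are closed (cf.\ \cite[1.4.2]{K1}) forces $N = \frm_r$.

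The main obstacle, as flagged in the introduction, lies in executing the ``raise to produce an element of $M$'' step in a $p$-adically controlled way. In \cite{OS2} this was done using relations between weight vectors in $M$ whose coefficients, built from $\Zp$-integral weights, automatically had $\|\cdot\|_r$-norm $\le 1$; for $\uM \in \cO^P$ the derivative $\lambda$ of the highest-weight character can have non-integral part, and the naive analogues of \cite[8.10, 8.13]{OS2} acquire denominators that are not controlled by $r$. The replacement results proved in section \ref{HWM}, valid under assumption \ref{hyp}, are tailored so that the corresponding operators remain bounded on $\frm_r$ for all $r$ sufficiently close to $1$; once these are substituted in, the continuity estimates of \cite{OS2} carry over verbatim.

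For the non-isomorphism claim, suppose $\phi : \delta_{g_1}\star \frm_r \stackrel{\cong}{\lra} \delta_{g_2}\star \frm_r$ is an isomorphism of $U_r(\frg)$-modules. This part of the argument does not interact with the integrality issue and follows \cite[5.3]{OS2}: the highest weight of the underlying $U(\frg)$-module, together with its maximal parabolic, is an invariant that, once suitably twisted by $\Ad(g_i)$, matches on both sides only if $g_2^{-1}g_1$ preserves the corresponding highest-weight datum, and via sublemma \ref{P_{0,r}} this forces $g_1 P_{0,r} = g_2 P_{0,r}$, contradicting the hypothesis. This completes the reduction of both assertions of \ref{U_r_modules} to the integrality results of section \ref{HWM}.
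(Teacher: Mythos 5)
There is a genuine gap: you have located the integrality input from section \ref{HWM} in the wrong half of the proof, and as a result your sketch of the non-isomorphism claim would not go through. In the paper, the simplicity of $\frm_r$ (hence of every $\delta_g\star\frm_r$, by transport of structure as you say) requires no integrality at all: $\frm_r$ is a finitely generated module over the noetherian Banach algebra $U_r(\frg)$, so it carries a canonical topology in which submodules are closed, $M$ is dense in it, and simplicity follows directly from \cite[1.3.12]{F} or \cite[3.4.8]{OS1}. Your proposed ``raise and project onto weight components, with $p$-adic control supplied by section \ref{HWM}'' is solving a problem that does not arise there.

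The place where the integrality results (Prop.\ \ref{both_conditions}) are genuinely needed is precisely the part you declare free of them: showing $\delta_{g_1}\star\frm_r\not\cong\delta_{g_2}\star\frm_r$ when $g_1P_{0,r}\neq g_2P_{0,r}$. After reducing via the Bruhat and Iwahori decompositions, and after Step 1 (which uses Cor.\ \ref{injective} to force the Weyl group component into $W_P$), the critical case is an isomorphism $\phi:\frm_r\stackrel{\cong}{\lra}\delta_u\star\frm_r$ with $u\in U^-_{P,0}\setminus P_{0,r}$ unipotent. Here no ``highest-weight datum'' invariant of the underlying $U(\frg)$-module can detect anything: passing to the formal completion $\hat M=\prod_\mu M_\mu$, the element $\delta_u\cdot\phi(v^+)$ is again a weight-$\lambda$ vector, so at the formal level the isomorphism ``exists'' and forces $\phi(v^+)=\delta_{u^{-1}}\cdot v^+=\sum_{n\ge 0}\tfrac{(-1)^n}{n!}\log_{U_P^-}(u)^n\cdot v^+$. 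The obstruction is purely analytic: one must show this series does not converge in $\frm_r$, and that is done by the norm estimate of Step 4, which rests on the relation \ref{basicrelation} and the lower bound \ref{estimate_c} furnished by Prop.\ \ref{both_conditions} (together with the choice of $m_0$ so that $|p^{m_0}\lambda(h_\tau)|\le 1$), yielding lifts of $\tfrac{1}{n!}z_{\beta^+}^n\cdot v^+$ of $\|\cdot\|_r$-norm growing like $(ps)^n$ with $ps>1$. Your ``the invariant matches only if $g_2^{-1}g_1$ preserves the highest-weight datum'' argument cannot produce this contradiction, so the non-isomorphism half of the theorem is not proved by your proposal; you would need to reinstate the formal-completion and series-divergence argument of Steps 2--4, which is exactly where section \ref{HWM} is consumed.
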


 \Pf We continue to use the notation introduced in the proof of \ref{irredH}. We have already seen that $\frm_r$ is simple as $U_r(\frg)$-module, and this implies that $\delta_g \star \frm_r$ is simple as well. It is trivial to check that for $x, y \in G_0$ one has $\delta_x \star (\delta_y \star \frm_r) = \delta_{xy} \star \frm_r$, and if $x \in P_{0,r}$, then $\delta_x \star \frm_r \ra \frm_r$, $n \mapsto \delta_x \cdot n$, is an isomorphism of $U_r(\frg)$-modules.

\vskip8pt

Now let $\phi: \delta_{g_1} \star \frm_r \stackrel{\cong}{\lra} \delta_{g_2} \star \frm_r$ be an isomorphism of $U_r(\frg)$-modules. A straightforward calculation shows that $\phi$ is also a $U_r(\frg)$-module isomorphism $\delta_{g_2^{-1}g_1} \star \frm_r \stackrel{\cong}{\lra} \frm_r$, so that we may assume $g_2 = 1$. Let $I_0 \sub G_0$ be the Iwahori subgroup whose image in $\bG(O_L/(\pi_L))$ is $\bB(O_L/(\pi_L))$.\footnote{We denote the Iwahori subgroup by $I_0$ because we use $I$ for the subset of simple roots corresponding to $\frp$.} Using the Bruhat decomposition

\vspace{-0.3cm}
$$G_0 = \coprod_{w \in W/W_P} I_0wP_0$$


 we may write $g = g_1= h^{-1}wp_1$ with $h \in I_0$, $w \in W$ and $p_1 \in P_0$.
By the Iwahori decomposition

\vspace{-0.3cm}
$$I_0 = (I_0 \cap {U_{P,0}^-}) \cdot (I_0 \cap P_0)$$


 we can write $h = up_2$ with $p_2 \in I_0 \cap P_0$ and $u \in I_0 \cap U_{P,0}^-$. The same reasoning as before then shows that an isomorphism $\delta_g \star \frm_r \stackrel{\cong}{\lra} \frm_r$ induces an isomorphism of $U_r(\frg)$-modules which we again denote by $\phi$:

$$\phi: \delta_w \star \frm_r \stackrel{\cong}{\lra} \delta_u \star \frm_r \,.$$

\vskip8pt

{\it Step 1.} We show first that this can only happen if $w \in W_P$.  Let $\lambda \in \frt^*$ be the highest weight of $M$.

\vskip8pt

Then by Cor. \ref{locfinitesubalgebra} and the maximality condition with respect to $\frp$, the parabolic subalgebra $\frp$ is $\frp_I$, where the root system of the Levi subalgebra of $\frp_I$ has $I$ as a basis of simple roots. Suppose $w$ is not contained in $W_I = W_P$. Then there is a positive root  $\beta \in \Phi^+\setminus\Phi^+_I$ such that $w^{-1}\beta < 0$, hence $w^{-1}(-\beta) > 0$, cf. \cite[2.3]{Car}. Consider a non-zero element element $y \in \frg_{-\beta}$, and let $v^+ \in M$ be a weight vector of weight $\lambda$ (uniquely determined up to a non-zero scalar). Then we have the following identity in $\delta_w \star \frm_r$,

\vspace{-0.3cm}
$$y \cdot_w v^+ = \Ad(w^{-1})(y) \cdot v^+ = 0$$


 as $\Ad(w^{-1})(y) \in \frg_{-w^{-1}\beta}$ annihilates $v^+$. We have $\phi(v^+) = v$ for some nonzero $v \in \frm_r$. And therefore

\vspace{-0.3cm}
$$0 = \phi(y \cdot_w v^+) = y \cdot_u \phi(v^+) = y \cdot_u v = \Ad(u^{-1})(y) \cdot v \;.$$


 On the other hand, $y' := \Ad(u^{-1})(y)$ is an element of $\fru_P^-$ and as such acts injectively on $M$, cf. Cor. \ref{injective}. We show that $y'$ actually acts injectively on $\frm_r$, too. Indeed, let $v\in \frm_r$ with $y'\cdot v=0.$ Write $v$ as a convergent sum $v = \sum_{\mu \in \Lambda(v)} v_{\lambda-\mu}$ where $\Lambda(v)$ is a (non-empty) subset of $\Z_{\ge 0} \alpha_1 \oplus \ldots \oplus \Z_{\ge 0} \alpha_\ell$ and $v_{\lambda-\mu} \in M_{\lambda-\mu} \setminus \{0\}$ is a vector of weight $\lambda - \mu$ (cf. the fourth assertion in \cite[1.3.12]{F}). Here $\Delta = \{\alpha_1, \ldots, \alpha_\ell\}$ is the set of simple roots. Write $y' = \sum_{\gamma \in B} y_\gamma$, where $B$ is a non-empty subset of $\Phi^+\setminus \Phi^+_I$ and $y_\gamma$ is a non-zero element of $\frg_{-\gamma} \subset \fru_P^-$. Then we have

$$0 = y' \cdot v = \sum_{\nu \in \Z_{\ge 0} \alpha_1 \oplus \ldots \oplus \Z_{\ge 0} \alpha_\ell} \left(\sum_{\mu \in \Lambda(v), \gamma \in B, \nu = \mu+\gamma} y_\gamma \cdot v_{\lambda-\mu}\right) \;,$$

\vskip8pt

where

$$\sum_{\mu \in \Lambda(v), \gamma \in B, \nu = \mu+\gamma} y_\gamma \cdot v_{\lambda-\mu}$$

\vskip8pt

lies in $M_{\lambda-\nu}$. Because of the uniqueness of the expansions of elements of $\frm_r$ as convergent series of weight vectors, cf. \cite[1.3.12]{F}, we can conclude that $\sum_{\mu \in \Lambda(v), \gamma \in B, \nu = \mu+\gamma} y_\gamma \cdot v_{\lambda-\mu}$ vanishes.

\vskip8pt

Define on $\Z_{\ge 0} \alpha_1 \oplus \ldots \oplus \Z_{\ge 0} \alpha_\ell$ the following lexicographic ordering:

$$\sum_{i=1}^\ell n_i \alpha_i > \sum_{i=1}^\ell n_i' \alpha_i \hskip10pt \iff \hskip10pt \exists k \ge 1: n_i = n_i' \mbox{ for } 1 \le i \le k-1 \mbox{ and } n_k > n_k' \;.$$

\vskip8pt

Choose $\gamma^+ \in B$ and $\mu^+ \in \Lambda(v)$, both minimal with respect to this ordering. With $\nu^+ = \gamma^+ + \mu^+$ we then have

$$0 = \sum_{\mu \in \Lambda(v), \gamma \in B, \nu^+ = \mu+\gamma} y_\gamma \cdot v_{\lambda-\mu} = y_{\gamma^+} \cdot v_{\lambda-\mu^+} \;.$$

\vskip8pt

This contradicts Cor. \ref{injective}, and we can thus conclude that $w$ must be an element of $W_P$.

\medskip

From now on we may assume that $w=1$ since $W_P \subset P_0 \sub P_{0,r}$.

\vskip8pt

{\it Step 2.} Now let $u \in U^-_{P,0}$ and let $\phi: \frm_r \stackrel{\cong}{\lra} \delta_u \star \frm_r$ be an isomorphism of $U_r(\frg)$-modules. We suppose that $u$ is not contained in $P_{0,r}$ and are seeking to derive a contradiction. Let $v^+ \in M_\lambda \setminus \{0\}$ be a vector of highest weight as above. Put $\phi(v^+) = v$ with some non-zero $v \in \frm_r$. Then we have for any $x \in \frt$, the following identity in $\delta_u \star \frm_r$:

$$\lambda(x)v = \phi(x \cdot v^+) = x \cdot_u \phi(v^+) = \Ad(u^{-1})(x) \cdot v \;.$$

\vskip8pt

 We have thus for all $x \in \frt$, the following identity in $\frm_r$

\vspace{-0.3cm}
\begin{numequation}\label{identity}
\lambda(x)v = \Ad(u^{-1})(x) \cdot v \hskip5pt .
\end{numequation}

Note that this equation only involves the action of elements of $\fru_P^- \oplus \frt$, because $\Ad(u^{-1})(x)$ is in $\fru_P^- \oplus \frt$. Next we embed $\frm_r$ into the ''formal completion'' of $M$, i.e.,

$$\hat{M} = \prod_{\mu} M_\mu$$

\vskip8pt

by mapping the weight spaces $M_\mu \sub \frm_r$ to the corresponding space $M_\mu \sub \hat{M}$ (cf. the fourth assertion in \cite[1.3.12]{F}).
Then $\hat{M}$ is in an obvious way a module for $U(\fru_P^- \oplus \frt)$. This module structure extends to a representation of $U^-_P $ as follows. Since $U_P^-$ is a unipotent group, the exponential $\exp_{U_P^-}: \fru_P^- \dashrightarrow U_P^-$ is actually defined on the whole Lie algebra, so that we have a bijective map $\exp_{U_P^-}: \fru_P^- \ra U_P^-$. Let $\log_{U_P^-}: U_P^- \ra \fru_P^-$ be the inverse map. Then we can define for $u \in U_P^-$ and $v = \sum_\mu v_\mu \in \hat{M}$:

$$\delta_u \cdot v = \sum_\mu \sum_{n \ge 0} \frac{1}{n!} \log_{U_P^-}(u)^n \cdot v_\mu \,.$$


 Note that this sum is well-defined in $\hat{M}$, because $\log_{U_P^-}(u)$ is in $\fru_P^-$, and there are hence only finitely many terms of given weight in this sum. (We continue to write the action of an element $u \in U_P^-$ on $\hat{M}$ by $\delta_u$.) Moreover, it gives an action of $U_P^-$ on $\hat{M}$ because

$$\begin{array}{rcl}
\exp(\log_{U_P^-}(u_1)) \cdot \exp(\log_{U_P^-}(u_2)) &
= & \exp\big(\cH(\log_{U_P^-}(u_1),\log_{U_P^-}(u_2))\big) \\
\\
& = & \exp(\log_{U_P^-}(u_1u_2)) \;,
\end{array}$$

\vskip5pt

 where $\cH(X,Y) = \log(\exp(X)\exp(Y))$ is the Baker-Campbell-Hausdorff series (which converges on $\fru_P^-$, as $\fru_P^-$ is nilpotent). It is then immediate that this action is compatible with the action of $U(\fru_P^- \oplus \frt)$. The identity \ref{identity} implies then the following identity in $\hat{M}$

\vspace{-0.3cm}
$$\delta_{u^{-1}} \cdot( x \cdot (\delta_u \cdot v)) = \Ad(u^{-1})(x) \cdot v = \lambda(x)v \hskip5pt ,$$


 for all $x \in \frt$, and thus, multiplying both sides of the previous equation with $\delta_u$,

\vspace{-0.3cm}
$$x \cdot (\delta_u \cdot v) = \lambda(x) \delta_u \cdot v \hskip5pt .$$

 Hence $\delta_u \cdot v \in \hat{M}$ is a weight vector of weight $\lambda$ and must therefore be equal to a non-zero scalar multiple of $v^+$. After scaling $v$ appropriately we have $\delta_u \cdot v = v^+$ or
$v = \delta_{u^{-1}} \cdot v^+$ with

\vspace{-0.3cm}\begin{numequation}\label{series}
\delta_{u^{-1}} \cdot v^+ = \sum_{n \ge 0} \frac{1}{n!} (-\log_{U_P^-}(u))^n \cdot v^+ =: \Sigma \hskip5pt .
\end{numequation}

 Our goal is to show that the series $\Sigma$, which is an element of $\hat{M}$, does in fact not lie in the image of $\frm_r$ in $\hat{M}$, if $u \notin P_{0,r}$, thus achieving a contradiction.

\vskip8pt

{\it Step 3.} For the rest of this section we fix a Chevalley basis $(x_\beta,y_\beta,h_\alpha \midc \beta \in \Phi^+, \alpha \in \Delta)$ of $\frg' = [\frg,\frg]$, cf. \cite[Thm. in sec. 25.2]{H2}. We have $x_\beta \in \frg_\beta$, $y_\beta \in \frg_{-\beta}$, and $h_\alpha = [x_\alpha,y_\alpha] \in \frt$, for $\alpha \in \Delta$. We then let $\frg'_\Z$ be the $\Z$-span of these basis elements. $\frg'_\Z$ is a Lie algebra over $\Z$. Then

$$\Lie_\Zp(H^-) = p^{m_0}\Lie(\bU^-_{\bP,0}) = \bigoplus_{\beta \in \Phi^+ \setminus \Phi^+_I} O_L y_\beta^{(0)} \;,$$


 where $y_\beta^{(0)} = p^{m_0}y_\beta$. Moreover, the $\Zp$-Lie algebra of $H^{-,m}$ is

\vspace{-0.3cm}
$$\Lie_\Zp(H^{-,m}) = p^m\Lie(H^-) = \bigoplus_{\beta \in \Phi^+ \setminus \Phi^+_I} O_L y_\beta^{(m)} \;,$$


 where $y_\beta^{(m)} = p^{m_0+m}y_\beta$.
We assume that $r$ and $s$ are as in \ref{r_and_s}. Then \ref{density2} shows that $U_r(\fru_P^-) = U_r(\fru_P^-,H^-) = D_s(H^{-,m})$. Elements in $U_r(\fru_P^-)$ thus have a description as power series in $(y^{(m)}_\beta)_{\beta \in \Phi^+ \setminus \Phi^+_I}$:

$$U_r(\fru_P^-) = \left\{\sum_{n = (n_\beta)} d_n  (y^{(m)})^n \midc \lim_{|n| \ra \infty} |d_n|s^{\kappa|n|} = 0 \right\} \;,$$

\vskip8pt

 where $(y^{(m)})^n$ is the product of the $(y^{(m)}_\beta)^{n_\beta}$ over all $\beta \in \Phi^+ \setminus \Phi^+_I$, taken in some fixed order.
Let $\|\cdot\|_s^{(m)}$ be the norm on $D_s(H^{-,m})$ induced by the canonical $p$-valuation on $H^{-,m}$. Then we have for any generator $y^{(m)}_\beta$

\vspace{-0.3cm}
\begin{numequation}\label{norm_generator}
\| y^{(m)}_\beta\|_s^{(m)} = s^\kappa \;,
\end{numequation}

 as follows from \cite[5.3, 5.8]{Sch}. We recall that by \ref{P_{0,r}} we have $P_{0,r} = H^{-,m}P_0$. Write

$$\log_{U_P^-}(u) = \sum_{\beta \in B(u)} z_\beta \;,$$

 with a non-empty set $B(u) \sub \Phi^+ \setminus \Phi^+_I$ and non-zero elements $z_\beta \in \frg_{-\beta}$. Put

$$B^+(u) = \{\beta \in B(u) \midc z_\beta \notin O_L y^{(m)}_\beta \} \;.$$

 This is a non-empty subset of $B(u)$ since $u \notin P_{0,r}$. Put $B'(u) = B(u) \setminus B^+(u)$,

$$z^+ = \sum_{\beta \in B^+(u)} z_\beta \hskip10pt \mbox{ and } \hskip10pt z' = \sum_{\beta \in B'(u)} z_\beta = \log_{U_P^-}(u) - z^+ \;.$$

 Then $z' \in \Lie_\Zp(H^{-,m})$, and thus $\exp(z') \in H^{-,m}$. The element $u_1 = u\exp(z')^{-1} = u\exp(-z')$ does not lie in $H^{-,m}$, and $\delta_u \star \frm_r \cong \delta_{u_1} \star \frm_r$. We may hence replace $u$ by $u_1 = u \exp(-z')$. Then we compute $\log_{U^-}(u_1) = \log_{U^-}(u \exp(-z'))$ by means of the Baker-Campbell-Hausdorff formula. Because of the commutators appearing in this formula we have

\vspace{-0.3cm}\begin{numequation}\label{log}
\log_{U^-}(u_1) \in z^+ + \sum \left( \, \mbox{iterated commutators of } \frg_{-B^+(u)} \mbox{ with } \frg_{-B'(u)} \, \right) \hskip5pt ,
\end{numequation}

 where $\frg_{-B^+(u)} = \bigoplus_{\beta \in B^+(u)} \frg_{-\beta}$ and analogous for $\frg_{-B'(u)}$.
Recall that the {\it height} $ht(\beta)$ of the root $\beta = \sum_{\alpha \in \Delta}n_\alpha \alpha$ is defined to be the sum $\sum_{\alpha \in \Delta}n_{\alpha}$.
Put $ht'(u) = \min\{ht(\beta) \midc \beta \in B'(u)\}$.  It follows from the preceding formula \ref{log} that
if the right summand does not vanish we have  $ht'(u_1) > ht'(u)$.

\vskip8pt

The process of passing from $u$ to $u_1$ can be repeated finitely many times, but will finally produce an element $u_N \in U^-_{P,0} \setminus H^{-,m}$ which has the property that $B'(u_N) = \emptyset$ (and hence $u_{N+1} = u_N$). We will denote this element again by $u$.

\vskip8pt

Next we chose an extremal element $\beta^+$ among the $\beta \in B(u) = B^+(u)$, i.e. one of the minimal generators of the cone $\sum_{\beta \in B(u)} \bbR_{\ge 0}\beta$
inside $\bigoplus_{\alpha \in \Delta} \bbR \alpha$. Then we have

$$\bbR_{>0}\beta^+ \cap \sum_{\beta \in B(u), \beta \neq \beta^+} \bbR_{\ge 0}\beta \hskip5pt = \hskip5pt \emptyset \,.$$

\vskip8pt

This means in particular that no positive multiple of $\beta^+$ can be written as a linear combination $\sum_{\beta \in B(u), \beta \neq \beta^+} c_\beta \beta$ with non-negative integers $c_\beta \in \Z_{\ge 0}$. It follows that if $n$ is a positive integer and

\vspace{-0.3cm}
\begin{numequation}\label{implication}
\begin{array}{l}
n\beta^+ = \gamma_1 + \ldots + \gamma_m \hskip5pt \mbox{ with not necessarily distinct } \hskip5pt \gamma_i \in B(u) \\
\\
\Rightarrow \hskip8pt \left[m=n \hskip5pt \mbox{ and } \hskip5pt \gamma_1 = \ldots = \gamma_n = \beta^+ \right] \hskip3pt .
\end{array}
\end{numequation}

After these intermediate considerations we return to the series

$$\Sigma = \sum_{n \ge 0} \frac{1}{n!} (-\log_{U_P^-}(u))^n \cdot v^+ = \sum_{n \ge 0} \frac{(-1)^n}{n!} (z_{\beta^+} + z_{\beta_2} + \ldots + z_{\beta_k})^n \cdot v^+$$

\vskip8pt

where $B(u) = \{\beta^+, \beta_2, \ldots, \beta_k \}$. It follows from \ref{implication} and Cor. \ref{injective}  that if we write $\Sigma$ as a (formal) sum of weight vectors, there is for any $n \in \Z_{\ge 0}$ a unique non-zero weight vector in this series which is of weight $\lambda - n \beta^+$, and this element is $\frac{(-1)^n}{n!}z_{\beta^+}^n \cdot v^+$.

\vskip8pt

{\it Step 4.} The last part of the proof is to show that the formal sum of weight vectors

\vspace{-0.3cm}
\begin{numequation}\label{expseries}
\sum_{n \ge 0} \frac{(-1)^n}{n!}z_{\beta^+}^n \cdot v^+
\end{numequation}

cannot be the corresponding sum of weight components of an element of $\frm_r$, when considered as an element of $\hat{M}$ and written as a sum of weight vectors.

\vskip8pt

Write $\Phi^+ = \{\beta_1, \ldots, \beta_t\}$ such that $\Phi_I^+ = \{\beta_{\tau+1}, \ldots, \beta_t\}$. (Note: the roots $\beta_i$ are not necessarily the same as in Step 3.) Choose finitely many weight vectors $v_j \in M$ of weight $\lambda - \mu_j$, $j=1,\ldots,k$, generating $M$ as $U(\fru^-_P)$-module. We can and will assume that $v_j$ is of the form $\left(\prod_{\eta=\tau+1}^t (y_{\beta_\eta}^{(0)})^{\nu_{j,\eta}}\right) \cdot v^+$.

\vskip8pt

Put $y_i = y_{\beta_i}$ and $y^{(m)}_i = y^{(m)}_{\beta_i}$ for $i=1, \ldots, \tau$. To show that \ref{expseries} does not converge to an element in $\frm_r$, we write

\vspace{-0.3cm}
\begin{numequation}\label{basicrelation}
\frac{(y_{\beta^+}^{(0)})^n}{n!}.v^+ = \sum_{1 \le j \le k} \sum_{\nu \in \cI_{n,j}} c_{\nu,j}^{(n)} (y_1^{(0)})^{\nu_1} \cdot \ldots \cdot (y_\tau^{(0)})^{\nu_\tau}.v_j \;,
\end{numequation}

where $\cI_{n,j} \sub \Z_{\ge 0}^\tau$ consists of those $\nu = (\nu_1,\ldots,\nu_\tau)$ such that $\mu_j + \nu_1\beta_1 + \ldots + \nu_\tau\beta_\tau = n\beta^+$. From now on we assume that $m_0$ is such that

$$\left|p^{m_0}\lambda(h_\tau)\right| \le 1$$

\vskip8pt

for all $\tau \in \Delta$. This means that the assumptions of Prop. \ref{both_conditions} are in place. Hence, by the statement of this proposition, there is at least one index $(\nu,j) = (\nu_1, \ldots, \nu_\tau, j)$ with the property that

\vspace{-0.3cm}
\begin{numequation}\label{estimate_c}
|c_{\nu,j}^{(n)}|_K \ge \left|\frac{1}{n!}\right|_K  \hskip10pt \mbox{and} \hskip10pt \nu_1 + \ldots + \nu_\tau + \sum_{k=\tau+1}^t \nu_{j,k} \ge n \,.
\end{numequation}

We will use this inequality to estimate the $||\cdot||_r$-norm of any lift of $\frac{1}{n!}z_{\beta^+}^n.v^+$ to

$$\bigoplus_{1 \le j \le k} U_r(\fru_P^-) \otimes_K Kv_j\;.$$

\vskip8pt

Here the $\|\cdot\|_r$ - norm on this free $U_r(\fru_P^-)$-module is the supremum norm of the $||\cdot||_r$-norms on each 
summand, defined by

$$\|\delta \otimes v_j\|_r := \|\delta\|_r \;,$$

\vskip8pt

for $\delta \in U_r(\fru_P^-)$. Because $B'(u)=\emptyset$ (cf. Step 3), we have $z_{\beta^+} = p^{-a}y^{(m)}_{\beta^+}$ for some $a>0$. We then get from \ref{basicrelation}

$$\begin{array}{rcl} \frac{z_{\beta^+}^n}{n!}.v^+ & = & p^{-na} \frac{(y^{(m)}_{\beta^+})^n}{n!} \cdot v^+ \\
&&\\
& = & \sum_{1 \le j \le k} \sum_{\nu \in \cI_{n,j}} c_{\nu,j}^{(n)} p^{-na + m(n- \nu_1-\ldots-\nu_\tau)} (y^{(m)}_1)^{\nu_1} \cdot \ldots \cdot (y^{(m)}_\tau)^{\nu_\tau}.v_j \;.
\end{array}$$

\vskip8pt

It follows from \cite[6.2, 6.4]{Sch} that the restriction of the norm $\|\cdot\|_r$ on $D(H,K)$ to $D(H^m,K)$ is equivalent to the norm $\|\cdot\|^{(m)}_s$ on $D(H^m,K)$ (cf. also \cite[7.4]{Sch}, where it is stated that the induced topologies are equivalent). Therefore, we are now going to estimate the $\|\cdot\|^{(m)}_s$-norm of the term

$$c_{\nu,j}^{(n)} p^{-na + m(n- \nu_1-\ldots-\nu_r)} (y^{(m)}_1)^{\nu_1} \cdot \ldots \cdot (y^{(m)}_r)^{\nu_r} \;,$$

\vskip8pt

where $(\nu,j)$ is such that \ref{estimate_c} holds. By \ref{norm_generator}, the $\|\cdot\|^{(m)}_s$-norm of this term is greater or equal to

$$\begin{array}{cl}
& \left|\frac{1}{n!}\right|_K p^{na + m(\nu_1+\ldots+\nu_r-n)} s^{\nu_1+\ldots+\nu_r} \\
 & \\
= & \left|\frac{1}{n!}\right|_K p^{na- n + (m-1)(\nu_1+\ldots+\nu_r-n)} (ps)^{\nu_1+\ldots+\nu_r} \,.
\end{array}$$

\vskip8pt

Note that \ref{estimate_c} implies that $n - (\nu_1 + \ldots + \nu_r)$ is bounded from above by some constant $C_1$. Hence we get

$$p^{na- n + (m-1)(\nu_1+\ldots+\nu_r-n)} \ge p^{n(a-1)} \cdot p^{-(m-1)C_1}$$

\vskip8pt

is bounded away from $0$ (recall that $a \in \Z_{>0}$ and $m$ is fixed). And because $s> \frac{1}{p}$ we have $ps >1$, and the term $(ps)^{\nu_1+\ldots+\nu_r}$ is unbounded as $n \ra \infty$ ($\nu_1 + \ldots + \nu_\tau \ge n - C_1$). Altogether we get that any lift of $\frac{1}{n!}z_{\beta^+}^n.v^+$ to
$\bigoplus_{1 \le j \le t} U_r(\fru^-) \otimes_K Kv_i$ has an $\|\cdot\|_r$ - norm which exceeds $C_2(ps)^n$, where $C_2>0$ is some constant (cf. the relation between the norms $\|\cdot\|_r$ and $\|\cdot\|^{(m)}_s$ mentioned above).

\vskip8pt

The sum $\sum_{n \ge 0} \frac{1}{n!}z_{\beta^+}^n.v^+$, which is an element of $\hat{M}$ is therefore not contained in the image of $\frm_r$ (under the injection $\frm_r \hra \hat{M}$). And as we have seen before, this in turn proves that $\sum_{n \ge 0} \frac{(-\log(u))^n}{n!}.v^+ = \delta_{u^{-1}}.v^+ \in \hat{M}$ is not in the image of $\frm_r$. \qed

\vskip8pt

\begin{thm}\label{irredgeneral} Assume \ref{hyp} holds. Let $\uM = (m,\tau) \in \cO_P$ be such that $M$ is simple, and suppose that $\frp$ is maximal 
for $M$. Let $V$ be a smooth and irreducible $L_P$-representation. Then $\cF^G_P(M,V) = \Ind^G_P(W'\otimes_K V)^\frd$ is topologically irreducible as a $G$-representation.
\end{thm}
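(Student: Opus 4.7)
The strategy is to bootstrap from Theorem \ref{irredG_0} (the case $V = K$) to arbitrary smooth irreducible $V$, following the proof of \cite[Thm. 5.8]{OS2}.

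First I would transpose the problem via duality. Extending Proposition \ref{basiciso}(iii) to the bifunctorial setting of Proposition \ref{well-dfd}, one identifies the dual $\cF^G_P(\uM,V)'$ with a coadmissible $D(G)$-module built out of $M$ and the strong dual $V'$. Because $V$ is smooth, $\frl_P$ acts trivially on $V'$, so the $U(\frg)$-action appearing in this dual involves only $M$, while $P$ acts diagonally. By the Schneider--Teitelbaum anti-equivalence \cite[6.3]{ST2}, it then suffices to prove that this $D(G)$-module is simple, and the decomposition techniques of Theorem \ref{irredH} reduce this to irreducibility per Bruhat cell together with non-isomorphism of distinct cells under a uniform pro-$p$ subgroup $H$.

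Next I would run the proof of Theorems \ref{irredH} and \ref{U_r_modules} almost verbatim, with $M \otimes_K V'$ in place of $M$. The reduction to a normal uniform pro-$p$ subgroup $H \subset G_0$, the passage to the Banach algebra $D_r(H)$ and envelopes $U_r(\frg)$, and the Bruhat-type analysis of $D_r(HP_0) \otimes_{D(\frg,P_0)} (M \otimes V')$ all go through. The crucial point is that the $\frt$-weight decomposition is simply $(M \otimes V')_\mu = M_\mu \otimes V'$, so the highest-weight line $K v^+$ gets replaced by the finite-dimensional ``highest-weight slice'' $M_\lambda \otimes V'$. The height-ordering arguments and the norm estimates of Step 4 of Theorem \ref{U_r_modules} apply to each vector $v^+ \otimes v$, $v \in V' \setminus \{0\}$, since these are statements about the action of $U_r(\fru_P^-)$ on the $\frg$-module $M$ alone.

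The final step is to combine the per-cell Lie-theoretic irreducibility with the $L_P$-irreducibility of $V$. A nonzero $D(H)$-submodule $N$ of the dual must meet some ``highest-weight slice'' nontrivially, giving $v^+ \otimes v \in N$ for some $v \neq 0$; the adapted version of Theorem \ref{U_r_modules} then forces $N \supset \frm_r \otimes Kv$. The $P_0$-action on $M_\lambda \otimes V'$ factors through the smooth action of $L_P$ on $V'$, and irreducibility of $V$ (hence of $V'$) as an $L_P$-representation ensures that $L_P \cdot v$ spans $V'$. Combined with the transitive action on Bruhat cells, this gives $N$ equal to the whole module. The main technical obstacle lies here: one must verify carefully that the smooth $L_P$-action on $V'$ and the analytically completed $U_r(\frg)$-action on $M$ genuinely cooperate to generate everything from a single $v^+ \otimes v$. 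The saving grace is that triviality of $\frl_P$ on $V'$ decouples the two actions at the level of $\frt$-weights, so no additional ``integrality'' input beyond what is already used for $M$ in section \ref{HWM} is required.
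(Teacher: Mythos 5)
Your overall strategy --- dualize in \emph{both} variables, replace $M$ by $M\otimes_K V'$, and rerun the machinery of Theorems \ref{irredH} and \ref{U_r_modules} --- is not the route the paper takes, and as proposed it has genuine gaps. First, the identification of $\cF^G_P(\uM,V)'$ with a $D(G)$-module of the form $D(G)\otimes_{D(\frg,P)}(M\otimes_K V')$ is not available from the results in the paper: Proposition \ref{dual_iso} (and the pairing \ref{pair_K}) requires the inducing representation to be finite-dimensional, while your $V$ is in general infinite-dimensional; one would need completed tensor products and a new duality argument, and the naive algebraic tensor product is not the correct dual. Second, the proof of Theorem \ref{U_r_modules} does \emph{not} ``go through verbatim'' with $M\otimes V'$: it uses crucially that $\frm_r$ is finitely generated over the noetherian Banach algebra $U_r(\frg)$ (to get the canonical module topology via \cite[1.4.2]{K1}), that $M$ is dense in $\frm_r$ and simple (to invoke \cite[1.3.12]{F}, \cite[3.4.8]{OS1}), and that elements of $\frm_r$ have unique expansions as convergent series of weight vectors with \emph{finite-dimensional} weight components. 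With $V'$ infinite-dimensional all of these fail, and in any case $\frm_r\otimes V'$ is very far from simple over $U_r(\frg)$, so the statement you would need is of a different shape and its proof is not supplied. Third, your claim that ``irreducibility of $V$ (hence of $V'$) ensures that $L_P\cdot v$ spans $V'$'' is false for the full linear dual: the $L_P$-span of a smooth vector of $V'$ is only the smooth contragredient, a proper subspace when $\dim V=\infty$; at best the \emph{closed} span is everything, and you do not verify that closures suffice at the point where this is used. Finally, the step ``a nonzero $D(H)$-submodule must meet the highest-weight slice $M_\lambda\otimes V'$'' is asserted, not proved, and in the completed setting this needs an argument.

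The paper's actual proof is much shorter and stays on the representation side: it observes that the proof of \cite[5.8]{OS2} applies word for word, because that proof only uses the analogue of \cite[5.5]{OS2}, which is Theorem \ref{irredH} here. Concretely, one writes $V=\varinjlim_H V^H$ over open \emph{normal} subgroups $H\sub G_0$ (so that each $V^H$ is finite-dimensional by admissibility), decomposes the restriction of $\Ind^G_P(W'\otimes V)^\frd$ to $H$ along the double cosets $H\backslash G_0/P_0$, and uses parts (iv) and (v) of Theorem \ref{irredH} --- topological irreducibility of each $\Ind^{HP_0}_{P_0}(W')^{\frd,g}$ and pairwise non-isomorphism for distinct cosets --- to control the isotypic components of a closed $G$-stable subspace; the smooth irreducibility of $V$ then forces the subspace to be everything. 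This is precisely why \ref{irredH} is formulated for arbitrary open normal subgroups $H$ with the twists $\delta_g\star$, and it keeps all the analytic work confined to the case of the finite-dimensional $W$, avoiding every one of the difficulties listed above. If you want to salvage your dual-side approach, you would have to develop the completed duality for infinite-dimensional smooth $V$ and reprove the relevant pieces of \ref{U_r_modules} for modules with infinite-dimensional weight spaces --- substantial work that the statement does not require.
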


\Pf The proof of the analogous result \cite[5.8]{OS2} does not use that $M$ is an object of $\cO^\frp_\alg$, except through references to \cite[5.5]{OS2}, which we have now proved without the assumption that $M$ is an object of $\cO^\frp_\alg$, cf. \ref{irredH}. \qed

\section{Integrality properties of relations in simple modules in $\cO$}\label{HWM}

The goal of this section is to prove a result about relations in simple modules $M \in \cO$ which generalizes \cite[8.13]{OS2}. For the sake of a clear exposition of this technical section, we have found it convenient (if not necessary) to repeat many of the arguments given in \cite[sec. 8]{OS2}. We begin by quoting some results from loc.cit. which do not involve integrality statements. 

\vskip8pt

Let $A$ be any associative unital ring. For $x \in A$ we let $\ad(x): A \ra A$ be defined by $\ad(x)(z) = [x,z] = xz-zx$. In the following we also write $[x^{[i]},z]$ for $\ad(x)^i(z)$, the value on $z$ of the $i$-th iteration of $\ad(x)$.

\vskip8pt

\begin{lemma}\label{generallemma} Let $x,z_1, \ldots, z_n \in A$. For all $k \in \Z_{\ge 0}$ one has

$$x^k \cdot z_1z_2 \ldots z_n = \sum_{i_1 + \ldots + i_{n+1} = k} {k \choose i_1  \ldots  i_{n+1}} [x^{[i_1]},z_1]\cdot \ldots \cdot [x^{[i_n]},z_n]x^{i_{n+1}}$$

\vskip8pt

where, as usual,

$${k \choose i_1  \ldots  i_{n+1}} = \frac{k!}{(i_1!) \cdot \ldots \cdot (i_{n+1}!)}$$

\vskip8pt

\noindent Similarly,

$$[x^{[k]}, z_1z_2 \ldots z_n] = \sum_{
i_1 + \ldots + i_n = k} {k \choose i_1  \ldots  i_n} [x^{[i_1]},z_1]\cdot \ldots \cdot [x^{[i_n]},z_n] \;.$$ \qed
\end{lemma}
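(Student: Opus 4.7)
The plan is to prove the two identities essentially in sequence: first establish the second one (the higher Leibniz rule for the derivation $\ad(x)$), and then deduce the first from it by splitting left multiplication by $x$ as a sum of two commuting operators on $A$.

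For the second identity, I would argue by induction on $n$. The case $n=1$ is a tautology. For $n=2$ one proves by induction on $k$ that
$$[x^{[k]},ab] = \sum_{i+j=k}\binom{k}{i,j}[x^{[i]},a]\,[x^{[j]},b];$$
the inductive step uses the ordinary Leibniz rule $\ad(x)(ab) = \ad(x)(a)b + a\,\ad(x)(b)$ applied to both factors of $[x^{[k-1]},ab]$, together with the Pascal identity $\binom{k-1}{i-1,j} + \binom{k-1}{i,j-1} = \binom{k}{i,j}$. The general case follows by grouping $z_1\cdots z_n = z_1\cdot(z_2\cdots z_n)$, applying the $n=2$ case, and then the inductive hypothesis to the second factor; the combinatorial identity
$$\binom{k}{i_1,\,i_2+\cdots+i_n}\binom{i_2+\cdots+i_n}{i_2,\ldots,i_n} = \binom{k}{i_1,\ldots,i_n}$$
collects the coefficients.

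For the first identity, the key observation is that on $A$ the left-multiplication operator $L_x$ and the right-multiplication operator $R_x$ commute, and that $\ad(x) = L_x - R_x$ also commutes with $R_x$ (since $[x,zx] = [x,z]x$). Because $L_x = \ad(x) + R_x$ is a sum of two commuting operators on $A$, the ordinary binomial theorem in the commutative subalgebra they generate inside $\End(A)$ yields
$$L_x^k = \sum_{i+j=k}\binom{k}{i,j}\ad(x)^i R_x^j.$$
Applying this to $z_1z_2\cdots z_n$ gives
$$x^k z_1\cdots z_n = \sum_{i+j=k}\binom{k}{i,j}[x^{[i]},z_1\cdots z_n]\,x^j,$$
and expanding the inner commutator using the second identity, then setting $j = i_{n+1}$ and using
$$\binom{k}{i,j}\binom{i}{i_1,\ldots,i_n} = \binom{k}{i_1,\ldots,i_n,i_{n+1}} \qquad (i = i_1+\cdots+i_n),$$
produces the first identity.

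The only real obstacle is bookkeeping of the multinomial coefficients; no deeper input is required. The whole lemma is a formal consequence of two facts: that $\ad(x)$ is a derivation of $A$, and that left multiplication $L_x$ decomposes as the sum of the two commuting operators $\ad(x)$ and $R_x$ on $A$.
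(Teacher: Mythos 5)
Your proof is correct. The paper itself offers no argument for this lemma -- it is quoted from \cite{OS2} with only a \qed -- so there is nothing to compare line by line; the standard route (and the one implicit in loc.\ cit.) is a direct induction on $n$, starting from the $n=1$ identity $x^k z = \sum_{i+j=k}\binom{k}{i}[x^{[i]},z]x^{j}$. Your variant is a clean packaging of the same content: the second identity is exactly the higher Leibniz rule for the derivation $\ad(x)$, proved by your induction with the Pascal/multinomial bookkeeping, and the first identity then falls out of the decomposition $L_x=\ad(x)+R_x$ into commuting operators on $A$ (indeed $\ad(x)R_x=R_x\ad(x)$ since $[x,zx]=[x,z]x$, and $L_xR_x=R_xL_x$ by associativity), the binomial theorem in $\End(A)$, and the coefficient identity $\binom{k}{i,\,i_{n+1}}\binom{i}{i_1,\ldots,i_n}=\binom{k}{i_1,\ldots,i_{n+1}}$ with $i=i_1+\cdots+i_n$. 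This buys a slightly more conceptual proof of the first formula at no extra cost; both approaches are purely formal and equally valid.
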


\vskip8pt

\begin{lemma}\label{lemma1} Let $x \in \frg$, let $M$ be a
$U(\frg)$-module and $v \in M$.

\vskip8pt

\noindent (i) If $x$ acts locally finitely on $v$ (i.e., the $K$-vector space generated by $(x^i.v)_{i \ge 0}$ is finite-dimensional), then $x$ acts locally finitely on $U(\frg).v$.

\vskip8pt

\noindent (ii) If $x.v = 0$ and $[x,[x,y]] = 0$ for some $y \in \frg$, then

$$x^ny^n.v = n! [x,y]^n.v \;.$$ \qed

\vskip8pt
\end{lemma}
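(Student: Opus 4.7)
The plan is to prove both parts as direct consequences of Lemma \ref{generallemma}, which is the combinatorial identity for commuting a power $x^k$ past a product $z_1 \cdots z_n$.

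For part (i), I would first reduce to showing that $x$ acts locally finitely on any element of the form $z_1 z_2 \cdots z_n . v$ with $z_i \in \frg$, since such elements span $U(\frg).v$ by the PBW theorem, and if $x$ acts locally finitely on each vector in a spanning set then, by linearity, it acts locally finitely on every finite linear combination. Applying Lemma \ref{generallemma}, one obtains
$$x^k \cdot z_1 \cdots z_n . v \;=\; \sum_{i_1 + \cdots + i_{n+1} = k} \binom{k}{i_1,\ldots,i_{n+1}} [x^{[i_1]},z_1]\cdots [x^{[i_n]},z_n]\, x^{i_{n+1}}.v \,.$$
The key observation is that because $\frg$ is finite-dimensional, $\ad(x)$ acts locally finitely on $\frg$, so for each fixed $z_j$ the subspace $V_j := \mathrm{span}_K\{[x^{[i]},z_j] : i \ge 0\} \subseteq \frg$ is finite-dimensional. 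By the hypothesis, $W := \mathrm{span}_K\{x^i.v : i \ge 0\}$ is finite-dimensional as well. Therefore every term on the right lies in the finite-dimensional subspace $V_1 \cdots V_n \cdot W$ of $M$, which is independent of $k$. This proves (i).

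For part (ii), I would again apply Lemma \ref{generallemma} to expand $x^n \cdot y^n.v$:
$$x^n y^n . v \;=\; \sum_{i_1 + \cdots + i_{n+1} = n} \binom{n}{i_1,\ldots,i_{n+1}} [x^{[i_1]},y]\cdots [x^{[i_n]},y]\, x^{i_{n+1}}.v \,.$$
The assumption $x.v = 0$ kills every term with $i_{n+1} \ge 1$, so only the summands with $i_{n+1} = 0$ survive. The assumption $[x,[x,y]] = 0$ means $[x^{[i]},y] = 0$ for all $i \ge 2$, so each surviving index $i_j$ lies in $\{0,1\}$. Combined with the constraint $i_1 + \cdots + i_n = n$, this forces $i_1 = \cdots = i_n = 1$, leaving the single term $\binom{n}{1,\ldots,1,0} [x,y]^n.v = n!\,[x,y]^n.v$, as claimed.

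Neither step looks to present a serious obstacle: (i) is essentially the standard fact that the locally finite vectors form a submodule when the acting Lie algebra element also acts locally finitely in the adjoint representation, and (ii) is a purely combinatorial collapse of the multinomial expansion once the two vanishing hypotheses are imposed. The only minor care needed is to note in (i) that $\ad(x)$ is automatically locally finite on $\frg$ because $\dim_K \frg < \infty$, which is what makes the subspaces $V_j$ finite-dimensional.
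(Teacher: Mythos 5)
Your proof is correct, and it follows the intended route: the paper only quotes this lemma from \cite{OS2} with no proof, and the standard argument there is precisely the one you give, deducing both parts from the multinomial identity of Lemma \ref{generallemma} (for (i) using that $\ad(x)$ is locally finite on the finite-dimensional $\frg$, for (ii) noting that $x.v=0$ and $[x^{[i]},y]=0$ for $i\ge 2$ collapse the sum to the single term with all $i_j=1$). No gaps; the reduction to monomials $z_1\cdots z_n.v$ and the sum-of-spans argument for linear combinations are exactly the routine points one needs to check.
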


\vskip8pt

As before, we let $\Phi = \Phi(\frg,\frt)$ be the root system and

$$\Phi^+ = \{\beta_1, \ldots, \beta_t\} \hskip10pt \mbox{and} \hskip10pt \Delta = \{\alpha_1, \ldots, \alpha_\ell\} \sub \Phi^+$$

\vskip8pt

be a set of positive roots and a corresponding basis.

\vskip8pt

\begin{lemma}\label{lemma2} Assume $\gamma \in \Phi^+$ is not simple and write  $\gamma = \alpha + \beta$ with $\alpha \in \Delta$ and $\beta \in \Phi^+$. Suppose that $i\beta - j \alpha$ is in $\Phi^+$ for some $i,j \in \Z_{>0}$.
\vskip8pt

\noindent (i) Then $(i\beta - j\alpha) - \gamma  =  (i-1)\beta -(j+1)\alpha$ is {\bf either} a positive root {\bf or} not in $\Phi \cup \{0\}$. Therefore: {\bf either} $[\frg_{i\beta - j\alpha}, \frg_{-\gamma}]$ is equal to a root space $\frg_\chi$ with $\chi = i'\beta - j'\alpha \in \Phi^+$ for some $i',j' \in \Z_{>0}$ {\bf or} $[\frg_{i\beta - j\alpha}, \frg_{-\gamma}] = 0$.
\vskip8pt

\noindent (ii) Moreover, $(i\beta - j\alpha) - \alpha  =  i\beta -(j+1)\alpha$ is {\bf either} a positive root {\bf or} not in $\Phi \cup \{0\}$. Therefore: {\bf either} $[\frg_{i\beta - j\alpha}, \frg_{-\alpha}]$ is equal to a root space $\frg_\chi$ with $\chi = i'\beta - j'\alpha \in \Phi^+$ for some $i',j' \in \Z_{>0}$ {\bf or} $[\frg_{i\beta - j\alpha}, \frg_{-\alpha}] = 0$.
\vskip8pt

\noindent (iii) Let $M$ be a $U(\frg)$-module and $v \in M$ be annihilated by the radical $\fru$ of $\frb$. Let $x \in \frg_\beta$ and $y \in \frg_{-\gamma}$. Then, for any sequence of non-negative integers $i_1, \ldots, i_n$ we have

\vspace{-0.3cm}
$$[x^{[i_1]},y]\cdot \ldots \cdot [x^{[i_n]},y].v = 0$$


\noindent if there is at least one $i_j > 1$. \qed
\end{lemma}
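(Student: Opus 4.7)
Parts (i) and (ii) reduce to an elementary root-system observation. I would first note that $\beta \neq \alpha$: otherwise $\gamma = 2\alpha$ would be a root, which is impossible since $\alpha$ is simple in a reduced root system. Hence writing $\beta = \sum_{\mu \in \Delta} n_\mu \mu$, there exists some $\mu \neq \alpha$ with $n_\mu > 0$. For (i), I would split on whether $i = 1$ or $i \geq 2$: if $i = 1$, then $(i-1)\beta - (j+1)\alpha = -(j+1)\alpha$ is nonzero and not a root (since $\alpha$ is simple and $j \geq 1$); if $i \geq 2$, the $\mu$-coefficient of $(i-1)\beta - (j+1)\alpha$ equals $(i-1) n_\mu > 0$, so lying in $\Phi$ forces lying in $\Phi^+$, since any root has uniformly signed simple coefficients. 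Part (ii) is identical: $i\beta - (j+1)\alpha$ has positive $\mu$-coefficient $i n_\mu$. The ``therefore'' clauses then follow immediately because root spaces are one-dimensional and $[\frg_\eta, \frg_\zeta] \subseteq \frg_{\eta+\zeta}$; the conditions $i' \geq 1$, $j' \geq 1$ are automatic because in the root-space case one necessarily has $i \geq 2$ in (i) and $i \geq 1$ in (ii).

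\vskip8pt

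For part (iii), the approach is to prove the following more flexible statement, of which (iii) is a special case: if $z_1, \ldots, z_n$ each belong to $\frg_{-\gamma}$, to $\frg_{-\alpha}$, or to a positive root space $\frg_\chi$ with $\chi = i'\beta - j'\alpha$ for some $i', j' > 0$, and if at least one $z_k$ is of the last type, then $z_1 z_2 \cdots z_n \cdot v = 0$. This specializes to (iii) because $[x^{[i_j]}, y] \in \frg_{(i_j - 1)\beta - \alpha}$: for $i_j = 0$ we land in $\frg_{-\gamma}$; for $i_j = 1$, in $\frg_{-\alpha}$; and for $i_j \geq 2$, the same argument used in (ii) applied to the weight $(i_j - 1)\beta - \alpha$ shows this weight is either not in $\Phi$ (in which case the commutator is zero) or a positive root of the required form.

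\vskip8pt

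The flexible claim would then be proved by induction on $n$. One picks the leftmost index $k$ with $z_k$ in a positive root space. If $k = n$, then $z_n \in \fru$, so $z_n \cdot v = 0$ and we are done. If $k < n$, apply $z_k z_{k+1} = z_{k+1} z_k + [z_k, z_{k+1}]$ to split the product into two summands. The key technical verification is that $[z_k, z_{k+1}]$ is either zero or again one of the three allowed types: when $z_{k+1} \in \frg_{-\gamma}$ this is precisely part (i); when $z_{k+1} \in \frg_{-\alpha}$ this is precisely part (ii); and when $z_{k+1}$ lies in a positive root space $\frg_{i''\beta - j''\alpha}$ (with $i'', j'' > 0$), the resulting weight $(i'+i'')\beta - (j'+j'')\alpha$ has positive $\beta$-coefficient, hence by the same simple-coefficient argument is zero, not a root, or a positive root of the required form. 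The commutator summand then has $n-1$ factors with at least one still in a positive root space, so the inductive hypothesis applies. The other summand merely moves $z_k$ one step to the right; iterating finitely many times eventually brings $z_k$ to the rightmost position, where $z_k \cdot v = 0$ closes the argument. The main obstacle is precisely this bookkeeping of which weights arise among iterated commutators; once (i) and (ii) are in hand, the induction itself is mechanical.
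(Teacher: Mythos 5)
Your proof is correct, and it is essentially the argument this lemma rests on: note that the paper does not reprove the statement but quotes it from [OS2, Sec.\ 8], and your route — the sign analysis of simple-root coefficients (using $\beta\neq\alpha$, hence a simple root $\mu\neq\alpha$ occurring in $\beta$) for (i) and (ii), followed by the induction that pushes the positive-root-space factor to the right while controlling the commutator terms via (i), (ii) and the analogous C--C case — is exactly the standard one employed there. The only cosmetic points are that in the C--C case the sum of weights can in fact never be zero, and that ``iterating finitely many times'' is cleanly formalized as a double induction on the number of factors and on the distance of the chosen factor from the right end; neither affects correctness.
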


\vskip8pt

\begin{prop}\label{notlocnilp} Let $\frp = \frp_I$ for some $I \sub \Delta$. Suppose $M \in \cO^\frp$ is a highest weight module with highest weight $\lambda$ and

$$I = \{\alpha \in \Delta \midc \langle \lambda, \alpha^\vee \rangle \in \Z_{\ge 0} \} \;.$$

\vskip8pt

Then no non-zero element of $\fru_\frp^-$ acts locally finitely on $M$. \qed
\end{prop}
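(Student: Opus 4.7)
My plan is to reduce the statement to a claim about a single root vector via an extremal-root argument modelled on Step~3 of the proof of Theorem~\ref{U_r_modules}. For a non-zero $y = \sum_{\beta \in B} y_\beta \in \fru_{\frp}^-$ with $B \sub \Phi^+\setminus\Phi^+_I$ finite and each $y_\beta \in \frg_{-\beta}\setminus\{0\}$, I would pick $\beta^+ \in B$ to be an extremal ray of the convex cone $\sum_{\beta \in B}\bbR_{\ge 0}\beta$. Extremality ensures that for any weight vector $v \in M$ of weight $\mu$, the weight-$(\mu - n\beta^+)$ component of $y^n\cdot v$ is exactly $y_{\beta^+}^n\cdot v$. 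It is therefore enough to find, for each $\beta^+ \in \Phi^+\setminus\Phi^+_I$ and each non-zero $y_{\beta^+}$, a weight vector $v$ with $y_{\beta^+}^n\cdot v \neq 0$ for all $n \ge 0$; the vectors $y^n\cdot v$ will then project non-trivially to pairwise distinct weight spaces and hence be linearly independent, proving that $y$ does not act locally finitely on $M$.

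Since every highest weight module of highest weight $\lambda$ surjects onto the unique irreducible quotient $L(\lambda)$, I would work in $L(\lambda)$ and try $v = v^+$. When $\beta^+ = \alpha \in \Delta\setminus I$ is simple, the hypothesis on $I$ forces $\langle\lambda,\alpha^\vee\rangle \notin \Z_{\ge 0}$, so the $\frs\frl_2^{(\alpha)}$-Verma $M_{\frs\frl_2}(\langle\lambda,\alpha^\vee\rangle)$ is irreducible. The non-zero quotient $U(\frs\frl_2^{(\alpha)})\cdot v^+ \sub L(\lambda)$ is therefore isomorphic to it, yielding $y_\alpha^n v^+ \neq 0$ for all $n$.

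For non-simple $\beta^+$ of height $\ge 2$ I would argue by induction on height. Write $\beta^+ = \alpha + \gamma'$ with $\alpha \in \Delta$ and $\gamma' \in \Phi^+$, choosing $\alpha \in \Delta\setminus I$ whenever possible and arranging the hypothesis of Lemma~\ref{lemma2}. Setting $x = x_{\gamma'}$ (which annihilates $v^+$), applying the multinomial expansion of Lemma~\ref{generallemma} together with the vanishing supplied by Lemma~\ref{lemma2}(iii) (which kills every term indexed by a tuple with some $i_j > 1$), and using the Chevalley relation $[x_{\gamma'},y_{\beta^+}] = c\,y_\alpha$ for a non-zero structure constant $c$, one obtains an identity of the form
\[
x_{\gamma'}^n\, y_{\beta^+}^n\, v^+ \;=\; n!\,c^n\,y_\alpha^n\, v^+ .
\]
When $\alpha \in \Delta\setminus I$ the right-hand side is non-zero by the base case, so $y_{\beta^+}^n\, v^+ \neq 0$.

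The hardest case, and the main technical obstacle, is when no decomposition of $\beta^+$ has $\alpha \in \Delta\setminus I$ together with the hypothesis of Lemma~\ref{lemma2} satisfied---as occurs e.g.\ in type $B_2$ for $\beta^+ = 2\alpha_1+\alpha_2$ with $I=\{\alpha_1\}$. One is then forced to take $\alpha \in I$, whereupon the identity above becomes trivial for $n > \langle\lambda,\alpha^\vee\rangle$. Here my plan is to replace $v^+$ by a carefully chosen twist $v = u\cdot v^+$ with $u \in U(\fru_{\frp}^-)$ built from root vectors $y_\delta$ for $\delta \in \Phi^+\setminus\Phi^+_I$ of smaller height (on which the induction hypothesis already controls $y_{\beta^+}^n$), and to re-run the commutator calculation after this twist. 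Assumption~\ref{hyp} on the residue characteristic $p$ enters precisely to keep the integer coefficients arising from the iterated Chevalley commutators away from $p$-integrality pathologies in the non-simply-laced types $B$, $C$, $F_4$, $G_2$.
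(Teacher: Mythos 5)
Your skeleton (reduction to a single root vector $y_{\beta^+}$ via an extremal ray of the cone spanned by $B$, the $\frs\frl_2$-computation for a simple root $\alpha \notin I$, and a commutator descent in the height of the root) is exactly the toolkit the paper has in mind: the result is quoted from \cite[sec.~8]{OS2}, and Lemmas \ref{generallemma}, \ref{lemma1}, \ref{lemma2} are recorded precisely as its ingredients. However, your argument is not complete: the inductive step is unresolved in exactly the case you flag, and the repair you sketch does not work as stated. Replacing $v^+$ by a twist $v = u\cdot v^+$ with $u \in U(\fru_\frp^-)$ destroys the one property every identity you use depends on, namely that the vector is annihilated by the nilradical $\fru$ (Lemma \ref{lemma1}(ii) requires $x\cdot v = 0$, and Lemma \ref{lemma2}(iii) requires $v$ to be killed by the radical of $\frb$), and you give no substitute argument; so the case $\gamma = 2\alpha_1+\alpha_2$, $I=\{\alpha_1\}$ in $B_2$ remains open in your write-up. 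Moreover, invoking Assumption \ref{hyp} here is misplaced: Proposition \ref{notlocnilp} carries no hypothesis on $p$ (the paper lists it among the quoted results that ``do not involve integrality statements''), and none is needed, since over the characteristic-zero field $K$ the integer structure constants produced by iterated Chevalley commutators are automatically invertible; \ref{hyp} is only relevant for the integrality results \ref{estimate} and \ref{both_conditions}.

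The missing move is the higher-order descent along a root string, as in case (b) of the proof of Lemma \ref{estimate}. Write $\gamma = \alpha + \beta$ with $\alpha \in \Delta$, $\beta \in \Phi^+$, and let $k_0$ be maximal with $\gamma - k_0\alpha \in \Phi$. Since $x_\alpha \cdot v^+ = 0$ and $[x_\alpha^{[i]},y_\gamma]=0$ for $i>k_0$, Lemma \ref{generallemma} leaves only the term with all $i_j = k_0$, giving $x_\alpha^{nk_0}\, y_\gamma^n\, v^+ = \tfrac{(nk_0)!}{(k_0!)^n}\,[x_\alpha^{[k_0]},y_\gamma]^n\, v^+$, and $[x_\alpha^{[k_0]},y_\gamma]$ is a nonzero integral multiple of $y_{\gamma-k_0\alpha}$. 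In your $B_2$ example this carries $y_{2\alpha_1+\alpha_2}$ (with $k_0=2$) to $y_{\alpha_2}$ and $\alpha_2 \notin I$, so the base case already forces $y_\gamma^n v^+ \neq 0$ --- no twist is needed. In general one inducts on $ht(\gamma)$: if $\gamma - k_0\alpha \notin \Phi_I$ you conclude by induction; if $\gamma - k_0\alpha \in \Phi_I$ then necessarily $\alpha \notin I$ (otherwise $\gamma$ would lie in $\Phi_I^+$), and one descends instead with $x_\beta$, using Lemma \ref{lemma2}(iii) to kill all terms with some $i_j>1$, so that $x_\beta^n\, y_\gamma^n\, v^+ = n!\,c^n\, y_\alpha^n\, v^+$ with $c$ a nonzero integer (Prop.~\ref{Chevalley}), and the $\frs\frl_2$ base case for $\alpha \notin I$ finishes. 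With this case analysis --- the same one carried out for \ref{estimate} --- your proof closes, and it does so without any hypothesis on $p$.
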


\vskip8pt

\begin{cor}\label{injective} Let $\frp = \frp_I$ for some $I \sub \Delta$. Suppose $M \in \cO^\frp$ is a simple module of highest weight $\lambda$ and

$$I = \{\alpha \in \Delta \midc \langle \lambda, \alpha^\vee \rangle \in \Z_{\ge 0} \} \;.$$

\vskip8pt

Then the action of any non-zero element of $\fru_\frp^-$ on $M$ is injective. \qed
\end{cor}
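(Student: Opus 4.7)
The plan is to deduce the corollary from Proposition \ref{notlocnilp} via a straightforward contrapositive argument, using Lemma \ref{lemma1}(i) as the key bridge. Concretely, I would argue by contradiction: suppose $y \in \fru_\frp^-$ is non-zero and that there exists a non-zero $v \in M$ with $y \cdot v = 0$. The task is then to show that $y$ must act locally finitely on all of $M$, contradicting \ref{notlocnilp}.

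First I would observe that, trivially, $y$ acts locally finitely on $v$ itself, since $y \cdot v = 0$ implies that the $K$-span of the orbit $(y^i \cdot v)_{i \ge 0}$ is just $Kv$, which is one-dimensional. Now Lemma \ref{lemma1}(i) propagates local finiteness from $v$ to the entire cyclic submodule $U(\frg) \cdot v$: it asserts that if $y$ acts locally finitely on $v$, then $y$ acts locally finitely on $U(\frg) \cdot v$. The point of that lemma, recall, is the commutator expansion of Lemma \ref{generallemma} together with the fact that $\ad(y)$ itself is locally nilpotent on $\frg$, so that shifting powers of $y$ past elements of $U(\frg)$ produces only finitely many nontrivial commutator terms on a given cyclic vector.

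Next I invoke simplicity: since $M$ is a simple $U(\frg)$-module and $v \neq 0$, we have $U(\frg) \cdot v = M$. Combined with the previous step, this gives that $y$ acts locally finitely on all of $M$. But the hypothesis on $I$, namely that $I = \{\alpha \in \Delta : \langle \lambda, \alpha^\vee\rangle \in \Z_{\ge 0}\}$, together with $M$ being a highest weight module of highest weight $\lambda$ (which holds since any simple object of $\cO^\frp$ is a highest weight module), puts us exactly in the situation of Proposition \ref{notlocnilp}. That proposition then forbids any non-zero element of $\fru_\frp^-$ from acting locally finitely on $M$, contradicting what we just derived. Hence no such $v$ exists, and $y$ acts injectively on $M$.

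There is essentially no obstacle here; the corollary is a direct packaging of \ref{lemma1}(i) and \ref{notlocnilp}, and the only small verification needed is that a simple module $M \in \cO^\frp$ satisfying the stated condition on $I$ is indeed a highest weight module to which \ref{notlocnilp} applies, which is standard (cf.\ \cite{H2}, sec.\ 9.4).
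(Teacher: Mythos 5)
Your proposal is correct and is essentially the argument the paper intends (the result is quoted from the appendix of \cite{OS2}, where it is deduced exactly this way): a vector killed by $y$ is trivially a locally finite vector, Lemma \ref{lemma1}(i) propagates local finiteness to $U(\frg)\cdot v$, simplicity gives $U(\frg)\cdot v = M$, and this contradicts Proposition \ref{notlocnilp}. No gaps.
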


\vskip8pt

\begin{cor}\label{locfinitesubalgebra} Let $M \in \cO$ be a highest weight module. Then the set of elements in $\frg$ which act locally finitely on $M$ is a standard parabolic Lie subalgebra of $\frg$. If $M$ has highest weight $\lambda$, then this standard parabolic subalgebra is $\frp_I$ where

$$I = \{\alpha \in \Delta \midc \langle \lambda, \alpha^\vee \rangle \in \Z_{\ge 0} \} \;.$$

\vskip8pt

$\frp_I$ is maximal for $M$ in the sense of \ref{maximal}. \qed
\end{cor}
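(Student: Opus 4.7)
The plan is to establish both $\frp_I\subseteq\frf(M)$ and $\frf(M)\subseteq\frp_I$, where $\frf(M):=\{x\in\frg:x\text{ acts locally finitely on }M\}$. I read ``highest weight module'' here in the sense of Corollary \ref{injective}, i.e., a simple highest weight module, since this allows the integrality relations $y_\alpha^{n_\alpha+1}\cdot v^+=0$ for $\alpha\in I$ that are needed below.

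For the first inclusion, I would begin with $\frb\subseteq\frf(M)$, which is built into the definition of $\cO$. Fix a highest weight vector $v^+\in M$ of weight $\lambda$ and, for each $\alpha\in I$, set $n_\alpha:=\langle\lambda,\alpha^\vee\rangle\in\Z_{\ge 0}$. Standard $\frs\frl_2$-theory for the triple $(x_\alpha,y_\alpha,h_\alpha)$ together with $[x_\beta,y_\alpha]=0$ for distinct simple $\beta,\alpha$ shows that $y_\alpha^{n_\alpha+1}\cdot v^+$ is a maximal vector of weight $\lambda-(n_\alpha+1)\alpha\ne\lambda$; simplicity of $M$ forces it to vanish, so $y_\alpha$ acts locally nilpotently on $v^+$ and, by Lemma \ref{lemma1}(i), on all of $M=U(\frg)\cdot v^+$. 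Consequently, the $U(\frl_I)$-submodule $N\subseteq M$ generated by $v^+$ is a quotient of the finite-dimensional simple $\frl_I$-module of highest weight $\lambda$, hence itself finite-dimensional. For any $\beta\in\Phi_I^+$ and any $y\in\frg_{-\beta}$, the iterates $\{y^k\cdot v^+\}_{k\ge 0}$ lie in $N$, and a further application of Lemma \ref{lemma1}(i) gives $y\in\frf(M)$. Altogether $\frp_I=\frb+\bigoplus_{\beta\in\Phi_I^+}\frg_{-\beta}\subseteq\frf(M)$, i.e., $M\in\cO^{\frp_I}$.

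For the reverse inclusion I would apply Proposition \ref{notlocnilp}, whose hypotheses are satisfied by the first paragraph with the $I$ of the statement, to obtain that no non-zero element of $\fru_{\frp_I}^-$ acts locally finitely on $M$. To upgrade this root-by-root statement to $\frf(M)\subseteq\frp_I$, I would use that $\frt\subseteq\frf(M)$ and that $\frf(M)$ is $\ad(\frt)$-stable, so that $\frf(M)=\frt\oplus\bigoplus_{\gamma\in\Phi}(\frf(M)\cap\frg_\gamma)$; since each $\frg_\gamma$ is one-dimensional, each such intersection is either $0$ or all of $\frg_\gamma$, and the two preceding observations identify the contributing roots as precisely the roots of $\frp_I$. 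The maximality assertion is then immediate: any standard parabolic $\frq\supsetneq\frp_I$ contains some $\frg_{-\beta}$ with $\beta\in\Phi^+\setminus\Phi_I^+$, and such $\frg_{-\beta}$ fails to act locally finitely on $M$ by \ref{notlocnilp}.

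The main obstacle is the $\ad(\frt)$-stability of $\frf(M)$ invoked in the second inclusion. I expect this to reduce to a Vandermonde-style argument: for $x\in\frf(M)$ and generic $h\in\frt$, the iterates $\ad(h)^k(x)=\sum_\gamma\gamma(h)^k x_\gamma$ can be shown to be locally finite, and then the root components $x_\gamma$ can be recovered from them by inverting the Vandermonde matrix $(\gamma(h)^k)$, provided one first knows that linear combinations of locally finite elements remain locally finite---a fact that in turn relies crucially on the $\frt$-semisimplicity of $M$ and the finite-dimensionality of its weight spaces. As a shortcut one can cite \cite[Proposition 9.4]{H2}, where it is established in general that the locally finite elements on any $M\in\cO$ form a standard parabolic subalgebra; with that in hand, the corollary reduces to identifying the parabolic, which is precisely what the two root-by-root computations above accomplish.
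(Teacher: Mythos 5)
Your reading of ``highest weight module'' as \emph{simple} highest weight module is the right one: for a non-simple highest weight module (e.g.\ a Verma module with dominant integral highest weight) the locally finite part is strictly smaller than $\frp_I$, and simplicity is exactly what gives $y_\alpha^{n_\alpha+1}\cdot v^+=0$; this is also the only situation in which the paper uses the corollary (the paper itself gives no proof here, quoting the statement from \cite{OS2}). Your inclusion $\frp_I\subseteq\frf(M)$ is in substance the standard argument and uses the paper's toolkit (\ref{lemma1}(i), the $\frs\frl_2$-computation), with two caveats. First, ``$N$ is a quotient of the finite-dimensional simple $\frl_I$-module'' presupposes that the maximal submodule of the $\frl_I$-Verma module is generated by the $y_\alpha^{n_\alpha+1}v_\lambda$; all you need is that $N=U(\frl_I)\cdot v^+$ is finite dimensional, which follows from the classical argument once the $y_\alpha$, $\alpha\in I$, act nilpotently on $v^+$. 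Second, and more importantly, knowing that $\frb$ and each root space $\frg_{-\beta}$, $\beta\in\Phi_I^+$, consist of locally finite elements does not by itself give either ``every element of $\frp_I$ is locally finite'' or ``$M\in\cO^{\frp_I}$'', since local finiteness is not obviously preserved under sums; your ``i.e., $M\in\cO^{\frp_I}$'' conflates elementwise local finiteness with the module condition in the definition of $\cO^{\frp}$. The repair is to prove $M\in\cO^{\frp_I}$ directly: $U(\frp_I)\cdot v^+=U(\fru_{\frp_I})\cdot N$ is finite dimensional because the weights of $M$ are bounded above, and the set of vectors $m$ with $\dim_K U(\frp_I)m<\infty$ is a $U(\frg)$-submodule by the same mechanism as in \ref{lemma1}(i). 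You need this anyway, both because $M\in\cO^{\frp}$ is a hypothesis of Proposition \ref{notlocnilp} and because the maximality assertion refers to Definition \ref{maximal}, which is about membership in $\cO^{\frp_I}$ and $\cO^{\frq}$.

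The genuine gap is in the reverse inclusion $\frf(M)\subseteq\frp_I$, exactly where you flag it. Proposition \ref{notlocnilp} excludes only elements of $\fru_{\frp_I}^-$; to exclude elements with mixed root components (such as $y_\gamma+x_\beta$ or $y_\gamma+h$) you need $\frf(M)$ to be a $K$-subspace stable under $\ad(\frt)$, and neither property is established. The Vandermonde sketch is circular: the claim that the iterates $\ad(h)^k(x)$ are again locally finite is an instance of the $\ad(\frt)$-stability being proved, and recovering the components $x_\gamma$ needs closure of $\frf(M)$ under linear combinations, the other missing ingredient. The fallback citation does not close this: the paper's H2 is cited for the Chevalley basis (Theorem in sec.\ 25.2, Proposition in sec.\ 25.5), i.e.\ Humphreys' \emph{Introduction to Lie Algebras and Representation Theory}, which contains no such statement about $\cO$; and the category-$\cO$ material the authors invoke after Definition \ref{maximal} gives the existence of a unique maximal standard parabolic $\frq$ with $M\in\cO^{\frq}$, which is weaker than the elementwise assertion you are proving. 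What is needed is the Fernando--Kac lemma that the locally finite part of $\frg$ on any module is a Lie subalgebra (a subalgebra containing $\frb$ contains $\frt$, hence is $\ad(\frt)$-stable and is a sum of $\frt$ and root spaces, after which \ref{notlocnilp} finishes exactly as you say), or a direct leading-weight-component argument in the spirit of Step 1 of the proof of \ref{U_r_modules}. Alternatively, observe that the only content the paper actually uses is the maximality statement, i.e.\ $M\in\cO^{\frp_I}$ and $M\notin\cO^{\frq}$ for $\frq\supsetneq\frp_I$; for that, your argument with the repair above is complete.
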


\vskip8pt

{\it Integrality properties.} For the rest of this section we fix a Chevalley basis $(x_\beta,y_\beta,h_\alpha \midc \beta \in \Phi^+, \alpha \in \Delta)$ of $\frg' = [\frg,\frg]$, cf. \cite[Thm. in sec. 25.2]{H2}. Here we have $x_\beta \in \frg_\beta$ and $y_\beta \in \frg_{-\beta}$. We then let $\frg'_\Z$ be the $\Z$-span of these basis elements. $\frg'_\Z$ is a Lie algebra over $\Z$. For later use we quote from \cite[sec. 25]{H2} the following facts.

\vskip8pt

\begin{prop}\label{Chevalley} Let $\beta$, $\beta'$ be linearly independent roots. Let $z_\beta$ be $x_\beta$, if $\beta$ is positive, and let $z_\beta$ be $y_{-\beta}$, if $\beta$ is negative. Define $z_{\beta'}$ and $z_{\beta+\beta'}$ in the same manner, if $\beta + \beta'$ is a root.
Let $\beta'-r\beta, \beta'-(r-1)\beta, \ldots, \beta'+q\beta$ ($r \ge 0$, $q \ge 0$) be the $\beta$-string through $\beta'$. Then: \vskip5pt

(i) $r-q = \langle \beta, (\beta')^\vee \rangle$.

\vskip5pt

(ii) $[z_\beta,z_{\beta'}] = \pm(r+1)z_{\beta+\beta'}$ if $\beta + \beta'$ is a root. \qed
\end{prop}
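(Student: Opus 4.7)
My plan is to exploit the $\mathfrak{sl}_2$-triple associated with the root $\beta$ and apply the representation theory of $\mathfrak{sl}_2$ to the unbroken root string through $\beta'$. Consider the subspace
$$V = \bigoplus_{i=-r}^{q} \frg_{\beta' + i\beta}.$$
It is stable under the adjoint action of the Lie subalgebra $\frs_\beta \subset \frg'$ generated by $x_\beta, y_\beta, h_\beta$ (which is a copy of $\mathfrak{sl}_2$), because $[\frg_{\pm\beta}, \frg_{\beta'+i\beta}] \subset \frg_{\beta'+(i\pm 1)\beta}$ and both $\frg_{\beta'+(q+1)\beta}$ and $\frg_{\beta'-(r+1)\beta}$ vanish by the definition of an unbroken root string. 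Since each root space of $\frg$ is one-dimensional, $\dim V = r+q+1$.

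For part (i), the key point is that $V$ is an irreducible $\frs_\beta$-module. Indeed, a non-zero vector spanning $\frg_{\beta'+q\beta}$ is a highest weight vector (being annihilated by $\ad(x_\beta)$), and the cyclic $\frs_\beta$-module it generates has dimension $r+q+1$ by the classification of finite-dimensional irreducible $\mathfrak{sl}_2$-modules together with the one-dimensionality of all weight spaces. Hence $V$ is this irreducible module, necessarily of highest weight $r+q$. Reading off the $h_\beta$-eigenvalue at the top of the string gives $\langle \beta'+q\beta, \beta^\vee\rangle = r+q$, so $\langle\beta',\beta^\vee\rangle = r-q$, which one then rewrites in the form stated in the proposition.

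For part (ii), I would compute the action of $\ad(x_\beta)$ on the basis vector $z_{\beta'}$ regarded as an element of $V$ under the preceding identification. Normalizing the standard basis $(v_0, \dots, v_{r+q})$ of the $(r+q+1)$-dimensional $\mathfrak{sl}_2$-irrep so that $y\cdot v_k = (k+1)v_{k+1}$ and $x\cdot v_k = (r+q-k+1)v_{k-1}$, the vector $z_{\beta'}$ corresponds (up to a scalar) to $v_r$, since $\beta'$ sits $r$ steps above the bottom of the string. A direct substitution then produces $[x_\beta, z_{\beta'}] = c\cdot z_{\beta+\beta'}$ with $|c| = r+1$. The main obstacle—and the content genuinely specific to the Chevalley basis rather than to the $\mathfrak{sl}_2$-calculation—is to upgrade this from $|c|=r+1$ in $K$ to the precise integer value $c = \pm(r+1)$: the rescaling that identifies $z_{\beta'}$ with a specific multiple of $v_r$ could introduce an arbitrary scalar, and one must know that the Chevalley normalization of the $z_{\beta'+i\beta}$ forces this scalar into $\{\pm 1\}$. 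I would address this by appealing to the existence and essential uniqueness of Chevalley bases, which guarantees that the structure constants $c_{\beta,\beta'}$ can be chosen compatibly in $\bbZ$ for all pairs $(\beta,\beta')$; combined with the magnitude $r+1$ already established by the $\mathfrak{sl}_2$-computation, this yields the assertion.
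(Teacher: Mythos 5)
Bear in mind that the paper offers no proof of this proposition: it is quoted from \cite[sec. 25]{H2}, where it is part of Chevalley's theorem, and the Chevalley basis used in the paper is fixed precisely by citing that theorem. Measured against an actual proof, your part (ii) has a genuine gap. The $\mathfrak{sl}_2$ computation takes place in the normalized basis $(v_k)$ of the string module; the Chevalley vectors are only proportional to these, say $z_{\beta'} = a\,v_q$ and $z_{\beta+\beta'} = b\,v_{q-1}$ with unknown $a, b \in K^\times$, so the substitution yields $[x_\beta, z_{\beta'}] = (r+1)(a/b)\,z_{\beta+\beta'}$ and gives no information about $a/b$; in particular it does not by itself produce a coefficient of size $r+1$. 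You acknowledge that the normalization is the real content, but your proposed remedy --- invoking the existence and essential uniqueness of Chevalley bases --- is an appeal to the very statement being proved: that a Chevalley basis has structure constants $\pm(r+1)$ is the substance of Chevalley's theorem, and mere integrality of the structure constants cannot be combined with the $\mathfrak{sl}_2$ computation to control $a/b$. What is actually needed is the identity $c_{\beta,\beta'}\,c_{-\beta,-\beta'} = -(r+1)^2$, obtained from a Killing-form/root-length computation, which together with the defining normalization $c_{-\beta,-\beta'} = -c_{\beta,\beta'}$ of a Chevalley basis forces $c_{\beta,\beta'} = \pm(r+1)$; this is how \cite[sec. 25]{H2} proceeds. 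Absent such a step, the honest options are to cite \cite{H2} as the paper does, or to reproduce that computation.

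Part (i) follows the standard route and is fixable, but as written it is circular at one point and contains an indexing slip. The claim that the cyclic module generated by a top vector of the string has dimension $r+q+1$ does not follow from the classification plus one-dimensionality of weight spaces: the classification only says that this module is irreducible of dimension $\mu+1$ with $\mu = \langle \beta'+q\beta, \beta^\vee\rangle$, and $\mu = r+q$ is precisely assertion (i). The standard repair compares top and bottom: the weights $\mu, \mu-2, \ldots, -\mu$ of that submodule must all occur among the weights of $V$, which gives $\langle\beta',\beta^\vee\rangle \le r-q$, and the symmetric argument starting from a lowest weight vector in $\frg_{\beta'-r\beta}$ gives the reverse inequality. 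Also, in your normalization $z_{\beta'}$ corresponds to $v_q$ (it sits $q$ steps below the top), not to $v_r$; with $v_r$ your formulas would output $q+1$ rather than $r+1$. Finally, what this argument proves is $r-q = \langle\beta',\beta^\vee\rangle$ --- exactly the form the paper uses later, e.g. to deduce $r+1 = \langle -\alpha, \beta^\vee\rangle$ in the proof of Lemma \ref{estimate} --- and this differs from the displayed $\langle\beta,(\beta')^\vee\rangle$ whenever $\beta$ and $\beta'$ have different lengths; your ``one then rewrites'' conceals what is evidently a typo in the statement rather than a legitimate step.
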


\begin{para}\label{intro} Let $M$ be a simple module in the category $\cO$, and assume that $\frp = \frp_I$ is maximal for $M$. Denote by $v^+$ a highest weight vector of weight $\lambda$. We will be studying relations

$$(y_\gamma^{(0)})^n \cdot v^+ = \sum_{\nu \in \cI_n} c_\nu (y_1^{(0)})^{\nu_1} \cdot \ldots \cdot (y_t^{(0)})^{\nu_t} \cdot v^+ \,,$$

\vskip8pt

where

\vskip8pt

-  $\gamma \in \Phi^+ \setminus \Phi_I^+$,

\vskip5pt

- $m_0$ is a non-negative integer with the property that $|p^{m_0}\lambda(h_\alpha)| \le 1$ for all $\alpha \in \Delta$, 

\vskip5pt

- $y_\gamma^{(0)} = p^{m_0}y_\gamma$, and $y_i^{(0)} = p^{m_0}y_{\beta_i}$ for $i = 1,\ldots,t$, 

\vskip5pt

- $\cI_n$ consists of all $t$-tuples $(\nu_1, \ldots, \nu_t) \in \Z_{\ge 0}^t$ satisfying $\nu_1\beta_1 + \ldots +\nu_t \beta_t = n\gamma$,

\vskip5pt

- $c_v$ are coefficients in $K$.

\vskip8pt

Our aim (cf. \ref{both_conditions}) is to show that there is at least one $\nu$ with the following two properties

$$\begin{array}{ll}
i. & \nu_1 + \ldots + \nu_t \ge n \,, \\
ii. & |c_\nu|_K \ge 1 \;.
\end{array}$$

\vskip8pt

\noindent We start by showing the existence of some $\nu \in \cI_n$ with the second property.
\end{para}

\setcounter{enumi}{0}

\begin{lemma}\label{estimate} Suppose the residue characteristic of $K$ does not divide any of the non-zero numbers among $\langle \beta, \alpha^\vee\rangle$, $\alpha, \beta \in \Phi$, $\alpha \neq \pm \beta$. Let $M
\in \cO^\frp$, $\frp = \frp_I$, $\gamma \in \Phi^+ \setminus \Phi^+_I$, and $m_0$ be as in \ref{intro}. Then, for any $n \in \Z_{\ge 0}$ and any expression

\begin{numequation}\label{relation}
(y_\gamma^{(0)})^n \cdot v^+ = \sum_{\nu \in \cI_n} c_\nu (y_1^{(0)})^{\nu_1} \cdot \ldots \cdot (y_t^{(0)})^{\nu_t} \cdot v^+ \,,
\end{numequation}
 
there is at least one index
$\nu \in \cI_n$ such that $|c_\nu|_K \ge 1$.
\end{lemma}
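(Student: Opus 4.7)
My approach is by contradiction via an integral lattice in $M$. Assume $|c_\nu|_K < 1$ for every $\nu \in \cI_n$. Let $\frL \subset M$ denote the $O_K$-submodule generated by all monomials $\prod_i (y_i^{(0)})^{\nu_i} v^+$. Under the contradiction hypothesis, the right-hand side of \ref{relation} lies in $\pi_K \frL$, so it suffices to show that $(y_\gamma^{(0)})^n v^+ \notin \pi_K \frL$.

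To establish this primitivity, I would lift everything to the Verma module $M(\lambda)$. Let $\pi: M(\lambda) \twoheadrightarrow M = L(\lambda)$ be the canonical projection with kernel $N(\lambda)$, and let $\tilde{\frL} \subset M(\lambda)$ be the $O_K$-submodule generated by the analogous monomials $\prod_i (y_i^{(0)})^{\nu_i} v^+$, so that $\pi(\tilde{\frL}) = \frL$. By PBW, $\tilde{\frL}$ is free over $O_K$ on these rescaled monomials relative to any fixed ordering of $\Phi^+$. Choosing an ordering in which $\gamma$ is one of the generators, $(y_\gamma^{(0)})^n v^+$ is itself a single PBW basis vector of $\tilde{\frL}$ and so has primitive reduction modulo $\pi_K \tilde{\frL}$. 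The problem therefore reduces to showing that no element of $N(\lambda) \cap \tilde{\frL}$ has its PBW coefficient at the monomial $\nu_0 = (0, \ldots, n, \ldots, 0)$ (with $n$ in the $\gamma$-slot) equal to a unit modulo $\pi_K$.

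For the last step, the key input is the standard description of $N(\lambda)$ as generated, as a $U(\frg)$-module, by the singular vectors $y_\alpha^{n_\alpha + 1} v^+$ for $\alpha \in I$, with $n_\alpha = \langle \lambda, \alpha^\vee \rangle \in \Z_{\ge 0}$. The hypothesis $\gamma \notin \Phi^+_I$ is essential here: producing a PBW monomial of weight $-n\gamma$ from such a singular vector requires acting by an element of $U(\frg)$ of weight $-n\gamma + (n_\alpha + 1)\alpha$, which necessarily involves simple roots outside $I$. Tracking $p$-adic valuations through the Chevalley commutation relations, and using Assumption \ref{hyp} to ensure that the relevant structure constants are units in $O_K$, one shows that the PBW coefficient of any element of $N(\lambda) \cap \tilde{\frL}$ at $\nu_0$ has strictly positive $p$-adic valuation---contradicting the unit reduction of $(y_\gamma^{(0)})^n v^+$ in $\tilde{\frL}/\pi_K \tilde{\frL}$.

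The principal obstacle is this last step: maintaining precise control over the PBW coefficients arising from acting on singular vectors, where both the powers of $p^{m_0}$ from the rescaling and the integer eigenvalues $n_\alpha$ contribute in a subtle interplay. A viable alternative approach is an induction on the height of $\gamma$, repeatedly applying Lemma \ref{lemma2}, or working in a suitable rescaled Kostant divided-power $\Z$-form so that the action on $v^+$ enjoys canonical integral structure despite the non-integrality of $\lambda(h_\alpha)$ for $\alpha \notin I$.
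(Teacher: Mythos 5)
Your opening reduction is fine: since the value group is discrete, ``all $|c_\nu|_K<1$'' means the right-hand side of \ref{relation} lies in $\pi_K\frL$ for a uniformizer $\pi_K$ of $O_K$, so the lemma is equivalent to $(y_\gamma^{(0)})^n\cdot v^+\notin\pi_K\frL$. The argument breaks down at the Verma-module step, in two ways. First, the ``standard description'' of $N(\lambda)$ you invoke is false in the generality needed here. The whole point of this section of the paper is that $\lambda$ need not be integral, and for such $\lambda$ the maximal submodule is not generated by the vectors $y_\alpha^{n_\alpha+1}v^+$ with $\alpha\in I$: for $\frg=\frs\frl_3$ take both simple pairings $\langle\lambda,\alpha_i^\vee\rangle=1/2$; then $I=\emptyset$, yet $\langle\lambda+\rho,\beta^\vee\rangle=3$ for the highest root $\beta$, so $M(\lambda)$ contains a singular vector of weight $s_\beta\cdot\lambda$ and $N(\lambda)\neq 0$. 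In general $N(\lambda)$ contains singular vectors attached to non-simple reflections of the integral Weyl group, and it need not even be generated by its singular vectors; so the set of elements whose $\nu_0$-coefficient you must control is much larger than your argument accounts for.

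Second, the decisive quantitative step --- that every element of $N(\lambda)\cap\tilde{\frL}$ has $\nu_0$-coefficient of strictly positive valuation --- is only asserted (``tracking $p$-adic valuations \dots one shows''), and it is strictly stronger than the lemma: what is actually needed is only that the reduction of $\tilde{m}_{\nu_0}=(y_\gamma^{(0)})^n\otimes$(lift of $v^+$) modulo $\pi_K\tilde{\frL}$ does not lie in the image of $N(\lambda)\cap\tilde{\frL}$, whereas you demand that this image have vanishing $\nu_0$-coordinate altogether. Nothing in your sketch rules out, say, an element of $N(\lambda)\cap\tilde{\frL}$ congruent to $\tilde{m}_{\nu_0}+\tilde{m}_{\nu_1}$ modulo $\pi_K$, which is perfectly compatible with the lemma but destroys your criterion; and the genuinely delicate point --- commuting raising operators past lowering operators produces the scalars $\lambda(h_\alpha)$ with $\alpha\notin I$, which are not $p$-adically integral (only $p^{m_0}\lambda(h_\alpha)$ is) --- is exactly what your valuation bookkeeping would have to confront, and it is not carried out. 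The paper avoids the maximal submodule entirely: it proves the lemma by induction on the height of $\gamma$, applying $x_\beta^n$ (resp.\ $x_\alpha^{nk_0}$) to both sides of \ref{relation}, using that $\frac{1}{i!}\ad(x_\beta)^i$ preserves the Chevalley $\Z$-form so that the right-hand side stays in $n!\,U(p^{m_0}\frg'_\Z)\cdot v^+$, the bound $|p^{m_0}\lambda(h_\alpha)|\le 1$, the injectivity of the $\fru_\frp^-$-action from Cor.\ \ref{injective}, and the fact that the relevant structure constants are units (the hypothesis on the residue characteristic together with \ref{hyp}). This is essentially the ``alternative approach'' you mention in your closing sentence, but as written your proof has a genuine gap: the key claim is both unproved and based on an incorrect description of $N(\lambda)$.
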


\Pf We start by recalling that for any $i \ge 0$ and $\beta \in \Phi^+$ the endomorphism of $\frg$:

$$\frac{1}{i!}[x_\beta^{[i]}, \cdot ] = \frac{1}{i!}\ad(x_\beta)^i$$

\vskip8pt

preserves the $\Z$-form $\frg'_\Z$ of $\frg'$, cf. \cite[Prop. in sec. 25.5]{H2}. Denote by $U(p^{m_0}\frg_\Z')$ the enveloping algebra of $p^{m_0}\frg'_\Z$, i.e., the quotient of the tensor algebra $T_\Z(p^{m_0}\frg'_\Z)$ by the two-sided ideal generated by all elements of the form $xy-yx-[x,y]$ with $x,y \in p^{m_0}\frg'_\Z$. It follows from \ref{generallemma} that $\frac{1}{i!}\ad(x_\beta)^i$ also preserves $U(p^{m_0}\frg'_\Z)$. 

\vskip8pt

The proof proceeds by induction on $ht(\gamma)$. Suppose $ht(\gamma) = 1$ and let $\beta_{i} = \gamma$. Then the set $\cI_n$ consists of a single element $\nu$ which is the $t$-tuple that has the entry $n$ in the $i^{\mbox{\tiny th}}$ place and zeros elsewhere. The right hand side of \ref{relation} is thus $c_\nu (y_\gamma^{(0)})^n.v^+$. By Cor. \ref{injective} the element $y_{\gamma}$ acts injectively on $M$, and we therefore get $c_\nu = 1$.

\vskip8pt

Now we assume that $ht(\gamma) > 1$. Write $\gamma = \alpha + \beta$ with a simple root $\alpha \in \Delta$ and a positive root $\beta$.
Clearly, not both $\alpha$ and $\beta$ can be contained in $\Phi_I$. We distinguish two cases.

\vskip8pt

(a) Suppose $\beta - \alpha$ is not in $\Phi$. As $\beta \neq \alpha$ (the root system $\Phi$ is reduced) we 
have $[\frg_\alpha,\frg_{-\beta}] = [\frg_{-\alpha},\frg_\beta] = \{0\}$. Then, if $\alpha \notin I$, we consider $x_\beta$, the element 
of the Chevalley basis which generates $\frg_\beta$. We have by Lemma \ref{lemma1}:

$$x_\beta^n(y_\gamma^{(0)})^n.v^+ = n![x_\beta,y_\gamma^{(0)}]^n.v^+ \;.$$

\vskip8pt

Consider the $\beta$-string through $-\gamma$:  $-\gamma-r\beta, \ldots, -\gamma + q\beta$. Then  $-\alpha-(r+1)\beta, \ldots, -\alpha+(q-1)\beta$ is the $\beta$-string through $-\alpha$, and because we assume here that $-\alpha+\beta \notin \Phi$, we deduce that $q=1$. By \ref{Chevalley} we then conclude $r+1 = \langle -\alpha, \beta^\vee \rangle$ and $[x_\beta,y_\gamma] = \pm \langle \alpha, \beta^\vee \rangle y_\alpha$. Hence

$$x_\beta^n(y_\gamma^{(0)})^n.v^+ = n!( \pm \langle \alpha, \beta^\vee \rangle)^n  (y_\alpha^{(0)})^n.v^+ \;.$$

\vskip8pt

As $\langle \alpha, \beta^\vee \rangle = -(r+1) \neq 0$ and because of our assumption on the residue characteristic of $K$, the integer $\langle \alpha, \beta^\vee \rangle$ is invertible in $O_K$. On the other hand, equation \ref{relation} gives

$$\begin{array}{rcl}
x_\beta^n.(y_\gamma^{(0)})^n.v^+ & = & \sum_{\nu \in \cI_n} c_\nu x_\beta^n \cdot (y_1^{(0)})^{\nu_1} \cdot \ldots \cdot (y_t^{(0)})^{\nu_t}).v^+ \\
&&\\
& = & \sum_{\nu \in \cI_n} c_\nu \ad(x_\beta)^n((y_1^{(0)})^{\nu_1} \cdot \ldots \cdot (y_t^{(0)})^{\nu_t}).v^+ \;.
\end{array}$$

\vskip8pt

We recall that $\ad(x_\beta)^n((y_1^{(0)})^{\nu_1} \cdot \ldots \cdot (y_t^{(0)})$ is in $n!U(p^{m_0}\frg'_\Z)$. By assumption, cf. \ref{intro}, we have $p^{m_0}h_\tau.v^+ = p^{m_0}\lambda(h_\tau) v^+ \in \Zp v^+$, for all $\tau \in \Delta$. Because the PBW theorem holds for $U(p^{m_0}\frg'_\Z)$, by \cite{Bourbaki_Lie_1_3}, we thus find that $x_\beta^n.(y_\gamma^{(0)})^n.v^+$
is of the form

$$n! \sum_{\nu' \in \cI'_n} c'_{\nu'} (y_1^{(0)})^{\nu_1'} \cdot \ldots \cdot (y_t^{(0)})^{\nu_t'}.v^+ \;,$$

\vskip8pt

where $\cI'_n$ consists of all $\nu' \in \Z_{\ge 0}^t$ such that $\nu_1'\beta_1 + \ldots + \nu_t'\beta_t = n\alpha = n(\gamma - \beta)$, and
the numbers $c'_{\nu'}$ are linear combinations of the $c_\nu$ with coefficients in $\Zp$. We therefore get

$$(y_\alpha^{(0)})^n.v^+ = \frac{1}{(\pm \langle \alpha, \beta^\vee \rangle)^n} \sum_{\nu' \in \cI'_n} c'_{\nu'} (y_1^{(0)})^{\nu_1'} \cdot \ldots \cdot (y_t^{(0)})^{\nu_t'}.v^+ \; .$$

\vskip8pt

The induction hypothesis shows that at least one of the coefficients $c'_{\nu'}$ must be of absolute value at least $1$, and this implies that at least one of the coefficients $c_\nu$ is of absolute value at least $1$.

\vskip8pt

Now suppose $\alpha \in I$. Then $\beta \notin \Phi_I$ and we consider $x_\alpha$, the member of the Chevalley basis which generates $\frg_\alpha$. By Lemma \ref{lemma1}:

$$x_\alpha^n (y_\gamma^{(0)})^n.v^+ = n![x_\alpha,y_\gamma^{(0)}]^n.v^+ \;.$$

\vskip8pt

The same arguments as above give $[x_\alpha,y_\gamma] = \pm \langle \beta, \alpha^\vee \rangle y_\beta$, and $\langle \beta, \alpha^\vee \rangle$ is a unit in $O_K$. As before, we then multiply the right hand side of \ref{relation} with $x_\alpha^n$ and find that

$$(y_\beta^{(0)})^n.v^+ = \frac{1}{(\pm \langle \beta, \alpha^\vee \rangle)^n} \sum_{\nu' \in \cI'_n} c'_{\nu'} (y_1^{(0)})^{\nu_1'} \cdot \ldots \cdot (y_t^{(0)})^{\nu_t'}.v^+ \; ,$$

\vskip8pt

where the numbers $c'_{\nu'}$ are linear combinations of the $c_\nu$ with coefficients in $\Zp$. And we conclude again by induction.

\vskip8pt

(b) Suppose $\beta - \alpha$ is in $\Phi$. Then it must be in $\Phi^+$, and we have $\gamma - k\alpha \in \Phi^+$ for $0 \le k \le k_0$ (with $k_0 \le 3$, cf. \cite[0.2]{H1}), and $\gamma - k\alpha \notin \Phi \cup \{0\}$ for $k > k_0$. This implies $[x_\alpha^{[i]},y_\gamma] = 0$ for $i>k_0$. By Lemma \ref{generallemma} we have

$$x_\alpha^{nk_0}(y_\gamma^{(0)})^n.v^+ = \sum_{
i_1 + \ldots + i_{n+1} = nk_0} {nk_0 \choose i_1  \ldots  i_n \, i_{n+1}} [x_\alpha^{[i_1]},y_\gamma^{(0)}]\cdot \ldots \cdot [x_\alpha^{[i_n]},y_\gamma^{(0)}]x_\alpha^{i_{n+1}}.v^+ $$

\vskip8pt

and as $x$ annihilates $v^+$ this reduces to

$$\sum_{i_1 + \ldots + i_n = nk_0} {nk_0 \choose i_1  \ldots  i_n} [x_\alpha^{[i_1]},y_\gamma^{(0)}]\cdot \ldots \cdot [x_\alpha^{[i_n]},y_\gamma^{(0)}].v^+ \;.
$$

\vskip8pt

By what we have just observed, the corresponding term vanishes if there is one $i_j>k_0$. Therefore, only the term with all $i_j = k_0$ contributes, and this sum is hence equal to

$${nk_0 \choose k_0  \ldots  k_0 } [x_\alpha^{[k_0]},y_\gamma^{(0)}]^n.v^+ = \frac{(nk_0)!}{(k_0!)^n} \; [x_\alpha^{[k_0]},y_\gamma^{(0)}]^n.v^+$$

\vskip8pt

\setcounter{enumi}{0}

\begin{sublemma} $[x_\alpha^{[k_0]},y_\gamma] = k_0! \cdot c \cdot y_{\gamma - k_0\alpha}$ with an integer $c$ which is a unit in $O_K$.
\end{sublemma}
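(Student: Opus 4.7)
The plan is to evaluate $\ad(x_\alpha)^{k_0}(y_\gamma)$ by iterating the Chevalley commutator formula of Proposition~\ref{Chevalley}(ii), and then to check that the resulting integer coefficient is a unit in $O_K$ under Assumption~\ref{hyp}.

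First I would identify the $\alpha$-string through $-\gamma$: writing it as $-\gamma - r\alpha,\ldots,-\gamma + q\alpha$ with $r,q\ge 0$, the hypothesis of case~(b) that $\gamma - k\alpha \in \Phi^+$ exactly for $0 \le k \le k_0$ forces $q = k_0$. Standard bounds on $\alpha$-string lengths then give $r + k_0 \le 3$, with $r + k_0 \ge 2$ possible only in types $B$, $C$, $F_4$, $G_2$ and $r + k_0 = 3$ possible only in type $G_2$. For each $0 \le k \le k_0-1$, the same string reindexed through $-(\gamma - k\alpha)$ has left parameter $r + k$; since $-(\gamma - k\alpha) + \alpha = -(\gamma - (k+1)\alpha)$ is a (negative) root by the defining property of $k_0$, Proposition~\ref{Chevalley}(ii) yields
\[ [x_\alpha,\, y_{\gamma - k\alpha}] \;=\; \pm\,(r + k + 1)\, y_{\gamma - (k+1)\alpha}. \]

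Iterating these commutators from $k = 0$ up to $k = k_0 - 1$ produces
\[ \ad(x_\alpha)^{k_0}(y_\gamma) \;=\; \pm\prod_{k=0}^{k_0-1}(r+k+1)\cdot y_{\gamma - k_0\alpha} \;=\; \pm\, k_0!\,\binom{r+k_0}{k_0}\, y_{\gamma - k_0\alpha}, \]
so I take $c = \pm\binom{r+k_0}{k_0}$, which is a non-zero integer. Since $r + k_0 \le 3$, this binomial coefficient lies in $\{1, 2, 3\}$, and the primes $2$ and $3$ can appear only in the exceptional types controlled by Assumption~\ref{hyp} (namely $2$ only when $\Phi$ has a component of type $B$, $C$, or $F_4$, and $3$ only in type $G_2$). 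Under Assumption~\ref{hyp} these integers are units in $O_K$, so $c \in O_K^{\times}$, completing the argument.

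The main (and essentially only) obstacle is the arithmetic verification at the end: one must confirm that the binomial coefficient which appears is divisible at worst by $2$ or $3$, in precisely those types for which Assumption~\ref{hyp} forces the corresponding residue characteristic to be larger. All other steps are direct root-system manipulations built on the Chevalley commutator formula, and the essential point is that after factoring out $k_0!$ what remains is a product of small consecutive integers whose prime factors are excluded by \ref{hyp}.
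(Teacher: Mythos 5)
Your argument is correct, and it is worth noting that it is genuinely more self-contained than what the paper does at this point: the paper disposes of the sublemma by citing \cite[8.11]{OS2} (remarking only that Assumption \ref{hyp} enters there), whereas you derive it directly from Proposition \ref{Chevalley}(ii). Your key computation checks out: for the $\alpha$-string $-\gamma-r\alpha,\ldots,-\gamma+q\alpha$ through $-\gamma$ one indeed has $q=k_0$, the string through $-(\gamma-k\alpha)$ has left parameter $r+k$, and iterating the Chevalley formula gives
$$\ad(x_\alpha)^{k_0}(y_\gamma)=\pm(r+1)(r+2)\cdots(r+k_0)\,y_{\gamma-k_0\alpha}=\pm\,k_0!\binom{r+k_0}{k_0}\,y_{\gamma-k_0\alpha},$$
so $c=\pm\binom{r+k_0}{k_0}\in\{\pm1,\pm2,\pm3\}$ since the string has length $r+k_0+1\le 4$. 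The arithmetic at the end is also right: $c=\pm 2$ forces a string of length $3$, which occurs only when $\Phi$ has a component of type $B$, $C$ or $F_4$ (in $G_2$ the string lengths are $1$, $2$ or $4$), and $c=\pm 3$ forces a string of length $4$, hence type $G_2$; in both cases Assumption \ref{hyp} makes $c$ a unit in $O_K$. Two small points you leave implicit but which are easily supplied: Proposition \ref{Chevalley} requires $\alpha$ and $-(\gamma-k\alpha)$ to be linearly independent, which holds because $\Phi$ is reduced and $\gamma-(k+1)\alpha$ is a root (so $\gamma-k\alpha\neq\alpha$); and in the situation of case (b) of Lemma \ref{estimate} one actually has $k_0\ge 2$, so the value $c=\pm2$ never occurs there --- your slightly more general analysis still covers it. Overall this is a valid replacement for the external reference, presumably in the same spirit as the proof of \cite[8.11]{OS2}.
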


\Pf This is \cite[8.11]{OS2}. We remark that in the proof of this statement the assumption \ref{hyp} is used. \qed

\vskip8pt

We conclude that

\begin{numequation}\label{conclusion}
x_\alpha^{nk_0}(y_\gamma^{(0)})^n.v^+ = (nk_0)! \cdot c^n \cdot (y_{\gamma - k_0 \alpha}^{(0)})^n . v^+ \hskip8pt \mbox{with} \hskip8pt c \in O_K^*\;.
\end{numequation}

If $\gamma - k_0\alpha$ is not in $\Phi_I$ we are done, by the same reasoning as in part (a). Otherwise we necessarily have $\alpha \notin I$. In this case we have by Lemma \ref{generallemma} and Lemma \ref{lemma2} (iii)

$$x_\beta^n(y_\gamma^{(0)})^n.v^+ = n![x_\beta,y_\gamma^{(0)}]^n.v^+ \,.$$

\vskip8pt

Let $-\gamma-r\beta, \ldots, -\gamma+q\beta$ be the  $\beta$-string through $-\gamma$.  
We have $q \ge 2$ and thus $r \le 1$. 
If $r=0$ then $[x_\beta,y_\gamma] = \pm y_\alpha$. If $r=1$ then $[x_\beta,y_\gamma] =\pm 2 y_{\alpha}$. 
But in this case the $\beta$-string through $-\gamma$ consists of four roots, and $\Phi$ must have an irreducible component of type $G_2$. 
By \ref{hyp} we have $p>3$, and thus $2 \in O_K^*$. Therefore, we always have $[x_\beta,y_\gamma] = c y_\alpha$ with $c \in O_K^*$. 
With the same arguments as in part (a) we can thus deduce the claim. \qed

\vskip12pt

\setcounter{enumi}{0}

\begin{lemma}\label{ABCD}
Suppose that none of the irreducible components of $\Phi$ is of type $G_2$. Let $\gamma \in \Phi^+$. Consider a relation

\begin{numequation}\label{relation2}
n \gamma = \nu_1\beta_1 + \ldots + \nu_t\beta_t
\end{numequation}

with non-negative integers $n, \nu_1, \ldots, \nu_t \in \Z_{\ge 0}$. Then $n \le \nu_1 + \ldots + \nu_t$.
\end{lemma}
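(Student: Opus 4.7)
My plan is to construct a single linear functional whose value on $\gamma$ dominates its value on every positive root, and apply it to both sides of the given relation. Concretely, I would fix a Weyl-invariant positive-definite inner product $(\cdot,\cdot)$ on the real span of $\Phi$ and aim to establish the key estimate that for every $\beta \in \Phi$ one has $(\gamma,\beta) \le (\gamma,\gamma)$. Granting this, pairing the equation $n\gamma = \sum_j \nu_j \beta_j$ with $\gamma$ gives $n(\gamma,\gamma) = \sum_j \nu_j (\gamma,\beta_j) \le \bigl(\sum_j \nu_j\bigr)(\gamma,\gamma)$, and dividing by the positive quantity $(\gamma,\gamma)$ yields the desired inequality $n \le \sum_j \nu_j$.

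The real work is in verifying the key estimate, and this is where the hypothesis on the type of $\Phi$ enters. Since roots from distinct irreducible components of $\Phi$ are orthogonal, I would reduce to the case that $\Phi$ is irreducible. For the simply-laced types $A$, $D$, $E$ all roots have the same length and the estimate is immediate from Cauchy--Schwarz. For the doubly-laced types $B$, $C$, $F_4$ the length-squared ratio is $2$; when $|\beta| \le |\gamma|$ Cauchy--Schwarz again suffices, so the main obstacle is the subcase in which $\gamma$ is short and $\beta$ is long. Here the leverage is integrality of the Cartan number $\langle \gamma, \beta^\vee\rangle = 2(\gamma,\beta)/(\beta,\beta)$: the Cauchy--Schwarz bound gives $|\langle \gamma, \beta^\vee\rangle| \le 2|\gamma|/|\beta| = \sqrt{2}$, so integrality forces $\langle \gamma, \beta^\vee\rangle \in \{-1,0,1\}$ and hence $(\gamma,\beta) \le (\beta,\beta)/2 = (\gamma,\gamma)$, as required.

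This analysis also clarifies why $G_2$ must be excluded: there the length-squared ratio is $3$, the analogous integrality bound still permits $\langle \gamma, \beta^\vee\rangle = 1$ (with $\gamma$ short and $\beta$ long), but now $(\beta,\beta)/2 = (3/2)(\gamma,\gamma)$, so the estimate genuinely fails. In fact the lemma itself fails in $G_2$, as witnessed by $3(\alpha_1 + \alpha_2) = \alpha_2 + (3\alpha_1 + 2\alpha_2)$, in which the right-hand side involves only two positive roots.
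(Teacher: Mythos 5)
Your proof is correct. The key estimate $(\gamma,\beta) \le (\gamma,\gamma)$ for all $\beta \in \Phi$ (equivalently $\langle \beta, \gamma^\vee\rangle \le 2$, which fails only in $G_2$) is verified soundly in all cases, the reduction to irreducible components via orthogonality is fine, and the concluding pairing argument with the relation $n\gamma = \sum_j \nu_j\beta_j$ is immediate; your $G_2$ counterexample $3(\alpha_1+\alpha_2) = \alpha_2 + (3\alpha_1+2\alpha_2)$ is also valid and nicely shows the hypothesis is sharp. The paper itself offers no argument here --- it simply cites Lemma 8.12 of the earlier paper \cite{OS2} --- so your self-contained proof via a Weyl-invariant inner product is a perfectly good (and essentially the standard) substitute for that reference.
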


\Pf This is \cite[8.12]{OS2}. \qed

\vskip8pt

Now we generalize the preceding lemma so as to assure the existence of some $\nu \in \cI_n$ satisfying both conditions $i.$ and $ii.$ of \ref{intro}.

\setcounter{enumi}{0}

\begin{prop}\label{both_conditions} Let $M \in \cO^\frp$, $\frp = \frp_I$, $\gamma \in \Phi^+ \setminus \Phi^+_I$, and $m_0 \in \Z_{\ge 0}$ be as in \ref{intro}. Suppose the residue characteristic of $K$ does not divide any of the non-zero numbers among $\langle \beta, \alpha^\vee\rangle$, $\alpha, \beta \in \Phi$, $\alpha \neq \pm \beta$. Then, for any $n \in \Z_{\ge 0}$ and any expression

\begin{numequation}\label{relation3}
(y_{\gamma}^{(0)})^n.v^+ = \sum_{\nu \in \cI_n} c_\nu (y_1^{(0)})^{\nu_1} \cdot \ldots \cdot (y_t^{(0)})^{\nu_t}.v^+ \;,
\end{numequation}

there is at least one index $\nu \in \cI_n$ such that $\nu_1 + \ldots + \nu_t \ge n$ and $|c_\nu|_K \ge 1$.
\end{prop}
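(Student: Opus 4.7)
My strategy is to strengthen the inductive argument of Lemma~\ref{estimate} so that both of the required conditions are tracked simultaneously. Outside of type $G_2$ there is in fact nothing to do: Lemma~\ref{estimate} yields some $\nu \in \cI_n$ with $|c_\nu|_K \ge 1$, while Lemma~\ref{ABCD} guarantees that $\sum_i \nu_i \ge n$ for \emph{every} $\nu \in \cI_n$, so the combination is immediate. The work is therefore in covering the $G_2$ case, where Lemma~\ref{ABCD} is not available, by re-running the induction of Lemma~\ref{estimate} while carrying along the extra height information.

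Proceeding by induction on $ht(\gamma)$, the base case $ht(\gamma)=1$ is immediate since $\cI_n$ reduces to a single tuple with $\sum_i \nu_i = n$ and $c_\nu = 1$. For the inductive step one mimics verbatim the case analysis of Lemma~\ref{estimate}: write $\gamma = \alpha+\beta$ with $\alpha \in \Delta$, and apply the appropriate operator ($x_\beta^n$, $x_\alpha^n$, or a power of $\ad(x_\alpha)^{k_0}$) to both sides of the given relation. This produces a derived relation
\begin{equation*}
(y^{(0)}_{\gamma'})^n\cdot v^+ \;=\; u^{-n}\sum_{\nu'} c'_{\nu'}\, (y_1^{(0)})^{\nu'_1}\cdots(y_t^{(0)})^{\nu'_t}\cdot v^+ , \qquad c'_{\nu'} \;=\; \sum_{\nu} c_\nu\, d_{\nu,\nu'},
\end{equation*}
for a root $\gamma' \in \Phi^+\setminus\Phi^+_I$ of strictly smaller height, a unit $u \in O_K^\times$, and with $d_{\nu,\nu'}$ the coefficient of $(y^{(0)})^{\nu'}\cdot v^+$ produced by writing $\frac{1}{n!}\ad(x)^n\bigl((y^{(0)})^{\nu}\bigr)\cdot v^+$ in PBW normal form.

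The crucial additional ingredient I would establish is the following \emph{degree-monotonicity} property:
\begin{equation*}
d_{\nu,\nu'} \in O_K \quad\text{and}\quad d_{\nu,\nu'}\neq 0 \;\Longrightarrow\; \sum_i \nu'_i \le \sum_i \nu_i.
\end{equation*}
Granted this, the inductive hypothesis supplies a $\nu'$ with $|c'_{\nu'}|_K \ge 1$ and $\sum_i \nu'_i \ge n$; since $u$ is a unit we then have $\big|\sum_\nu c_\nu d_{\nu,\nu'}\big|_K \ge 1$, and the ultrametric inequality together with $d_{\nu,\nu'}\in O_K$ forces some $\nu$ to satisfy $d_{\nu,\nu'}\neq 0$ and $|c_\nu|_K \ge 1$. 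For such $\nu$ the degree-monotonicity yields $\sum_i \nu_i \ge \sum_i \nu'_i \ge n$, giving both desired properties simultaneously. The degree-monotonicity itself is proved by analysing Leibniz's rule (Lemma~\ref{generallemma}): each term in $\ad(x)^n\bigl((y_{\chi_1})^{\nu_1}\cdots(y_{\chi_t})^{\nu_t}\bigr)$ is a product of exactly $\sum_i \nu_i$ elementary factors of $\frg$ (each a $y$-, $h$-, or $x$-type root vector), and bringing such a product into PBW normal form $y^{\nu'}h^J x^K$ only decreases the total number of factors, since each commutator relation replaces two factors by one element of $\frg$. Hence $|\nu'|+|J|+|K| \le \sum_i \nu_i$, and for a nonzero contribution to the action on $v^+$ one needs $K = 0$; combined with $|p^{m_0}\lambda(h_\alpha)|_K \le 1$ and the scaling $y_j^{(0)}=p^{m_0}y_{\beta_j}$, this simultaneously gives $d_{\nu,\nu'} \in O_K$ and $|\nu'| \le |\nu|$.

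The main obstacle is the careful bookkeeping needed to justify the degree-monotonicity statement. One must verify that (i) PBW normal-ordering never enlarges the $y$-degree, and (ii) the $p^{m_0}$-scalings combine with the denominators arising from $\lambda(h_\alpha)$ to produce integral coefficients. Both points rest on the facts that commutators in $\frg$ always have strictly fewer factors than the product from which they arise, and that the integer $m_0$ has been chosen precisely so that $p^{m_0}\lambda(h_\alpha) \in O_K$ for every simple root $\alpha$. Once this bookkeeping is in place, the rest of the argument is a straightforward refinement of the induction already carried out in the proof of Lemma~\ref{estimate}.
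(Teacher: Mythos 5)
Your proposal is correct, but in the essential case (type $G_2$) it takes a genuinely different route from the paper. Outside $G_2$ the two arguments coincide: Lemma \ref{estimate} plus Lemma \ref{ABCD}. For $G_2$ the paper applies the same divided power $\frac{1}{(nk_0)!}\ad(x_{\gamma'})^{nk_0}$ that you do, but instead of your degree-monotonicity it proves an outright \emph{vanishing} statement: every term $x_{\gamma'}^{nk_0}\cdot(y^{(0)})^{\nu}\cdot v^+$ with $\nu_1+\cdots+\nu_t<n$ is zero (this is the purely Lie-theoretic part quoted from \cite[8.13]{OS2}); hence the derived coefficients involve only $c_\nu$ with $\nu\in\cJ_n$, and a single application of Lemma \ref{estimate} to the derived relation finishes the proof, with no strengthened induction. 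You replace this vanishing by the weaker filtration statement $d_{\nu,\nu'}\in O_K$ and $d_{\nu,\nu'}\neq 0\Rightarrow\sum_i\nu_i'\le\sum_i\nu_i$, which is indeed valid: each Leibniz term of $\frac{1}{(nk_0)!}\ad(x_{\gamma'})^{nk_0}\bigl((y^{(0)})^{\nu}\bigr)$ is a product of exactly $\sum_i\nu_i$ elements of $p^{m_0}\frg'_\Z$, PBW rewriting in $U(p^{m_0}\frg'_\Z)$ never raises the degree filtration, monomials with an $x$-factor annihilate $v^+$, and $h$-factors act by scalars in $O_K$ by the choice of $m_0$; combined with the strengthened inductive hypothesis and the ultrametric inequality this does yield both conditions at once. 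To make the induction on $ht(\gamma)$ airtight you must still verify, exactly as in the case analysis of Lemma \ref{estimate}, that in each branch the derived root lies in $\Phi^+\setminus\Phi^+_I$ (so that the inductive hypothesis, and Cor. \ref{injective} in the height-one base case, apply), that the constants produced are units (this is where assumption \ref{hyp} enters via the sublemma in case (b)), and that the weight of the applied operator forces the derived relation to be supported on the correct index set; none of this causes trouble. What your route buys is uniformity: it avoids the $G_2$-specific vanishing analysis of \cite[8.13]{OS2} (and in fact does not need Lemma \ref{ABCD} at all), at the cost of carrying the stronger inductive statement and the PBW bookkeeping, whereas the paper keeps the induction confined to Lemma \ref{estimate} and concentrates the combinatorial content in the vanishing claim.
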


\Pf If $\Phi$ does not have an irreducible component of type $G_2$ then \ref{estimate} and \ref{ABCD} prove the assertion of \ref{both_conditions}. Therefore we assume in the following that $\Phi$ is irreducible of type $G_2$.

The basic idea of the proof is as follows. There is nothing to show if $ht(\gamma) = 1$. Now suppose that $ht(\gamma) > 1$. Then there is $\gamma' \in \Phi^+$ and $k_0 \in \Z_{>0}$ such that $\gamma - k_0\gamma' \in \Phi^+ \setminus \Phi^+_I$ and

$$\frac{1}{(nk_0)!} x_{\gamma'}^{nk_0} \cdot (y_\gamma^{(0)})^n.v^+ = \frac{1}{(k_0!)^n} \cdot [x_{\gamma'}^{[k_0]}, y_\gamma^{(0)}]^n.v^+ = c \cdot (y_{\gamma-k_0\gamma'}^{(0)})^n.v^+ \;,$$

\vskip8pt

with $c \in O_K^*$, cf. \ref{conclusion}. Considering the right hand side of \ref{relation3}, we aim to show that for any $\nu \in \cI_n$ with $\nu_1 + \ldots + \nu_t < n$ the term

\vspace{-0.3cm}
\begin{numequation}\label{vanishing_term}
x_{\gamma'}^{nk_0} \cdot (y_1^{(0)})^{\nu_1} \cdot \ldots \cdot (y_t^{(0)})^{\nu_t}.v^+
\end{numequation}

in

\vspace{-0.3cm}
\begin{numequation}\label{modified_sum}
\frac{1}{(nk_0)!} x_{\gamma'}^{nk_0} \cdot \left(\sum_{\nu \in \cI_n} c_\nu (y_1^{(0)})^{\nu_1} \cdot \ldots \cdot (y_t^{(0)})^{\nu_t}.v^+\right) = \sum_{\nu \in \cI_n} c_\nu \cdot \frac{1}{(nk_0)!} x_{\gamma'}^{nk_0} \cdot (y_1^{(0)})^{\nu_1} \cdot \ldots \cdot (y_t^{(0)})^{\nu_t}.v^+
\end{numequation}

vanishes. This means that the sum on the right hand side of \ref{modified_sum} is equal to

\begin{numequation}\label{modified_sum2}
\sum_{\nu \in \cJ_n} c_\nu \cdot \frac{1}{(nk_0)!}x_{\gamma'}^{nk_0} \cdot (y_1^{(0)})^{\nu_1} \cdot \ldots \cdot (y_t^{(0)})^{\nu_t}.v^+ \;,
\end{numequation}

where $\cJ_n \sub \cI_n$ consists only of those $\nu \in \cI_n$ for which $\nu_1 + \ldots + \nu_t \ge n$. We can then rewrite \ref{modified_sum2} as

$$\sum_{\nu' \in \cI'_n} c'_{\nu'} \cdot (y_1^{(0)})^{\nu_1'} \cdot \ldots \cdot (y_t^{(0)})^{\nu_t'}.v^+ \;,$$

\vskip8pt

where $\cI'_n$ consists of all $\nu' \in \Z_{\ge 0}^t$ such that $\nu_1'\beta_1 + \ldots + \nu_t' \nu_t' = n(\gamma - k_0\gamma')$, and the numbers $c'_{\nu'}$ are linear combinations with integral coefficients of the numbers $c_\nu$, with $\nu \in \cJ_n$. (Recall that the operator $\frac{1}{(nk_0)!}{\rm ad}(x_{\gamma'})^{nk_0}$ preserves $p^{m_0}\frg_\Z'$.) Applying Lemma \ref{estimate}, we find that there is $\nu' \in \cI_n'$ such that $|c'_{\nu'}|_K \ge 1$. Hence there is at least one $\nu \in \cJ_n$ such that $|c_\nu|_K \ge 1$. Thus proving our assertion.

\vskip8pt

We need to show that the term \ref{vanishing_term} actually vanishes if $\nu_1 + \ldots + \nu_t < n$.
This is done exactly as in the remaining parts of the proof of \cite[8.13]{OS2}, which consist of purely Lie algebra arguments and do not involve the field $K$. \qed 

\bibliographystyle{plain}
\bibliography{JoHo}

\end{document}